\documentclass[11pt, reqno]{article}

\usepackage[utf8]{inputenc}
\usepackage[T1]{fontenc}

\usepackage{setspace}
\usepackage[margin=1.25in]{geometry}
\usepackage{parskip}
\usepackage[sc]{mathpazo}
\usepackage[T1]{eulervm}
\usepackage[scaled]{helvet}

\usepackage{amsmath,amsthm, amssymb, amsfonts}
\usepackage{mathtools}
\usepackage{graphicx}
\usepackage[hidelinks]{hyperref}

\theoremstyle{plain}
\newtheorem{thm}{Theorem}
\newtheorem{cor}{Corollary}[section]
\newtheorem{lem}{Lemma}[section]
\newtheorem{prop}{Proposition}[section]

\theoremstyle{definition}

\newtheorem{rem}{Remark}[section]

\numberwithin{equation}{section}

\newcommand{\E}[1]{\mathbf{E}\left[#1\right]}

\newcommand{\ind}[1]{\mathbf{1}_{\{ #1 \}}}
\newcommand{\dt}[1]{\mathrm{det}\left(#1\right)}
\newcommand{\R}{\mathbb{R}}

\title{The extreme statistics of some noncolliding Brownian processes}
\author{Mustazee Rahman\thanks{\textsc{Department of Mathematical Sciences, Durham University}. \textit{Email}: \texttt{mustazee@gmail.com}}}
\date{}

\begin{document}

\maketitle

\begin{abstract}
    \noindent We consider certain noncolliding interacting particle systems driven by Brownian noise.
    A key example is drifted Brownian motions conditioned not to intersect and related models
    of eigenvalues of Hermitian random matrices. We establish limit theorems for the extremal
    particle. We find: (i) the scaling limit of the largest eigenvalue of Brownian motion over
    Hermitian, positive-definite matrices, (ii) Airy process limit for the largest eigenvalue
    of Dyson's Brownian motion for GUE started from generic initial conditions, and (iii) a
    Fredholm determinant formula for the maximum of the top path among noncolliding
    Brownian bridges and, as a byproduct, a new formula for the law of largest eigenvalue in a particular
    Laguerre Orthogonal Ensemble as well as for a related point-to-line last passage percolation model.
\end{abstract}

\section{Introduction}
A noncolliding Brownian process, loosely speaking, is a continuous stochastic process of particles diffusing so as to repel each other. Perhaps the most notable example is Dyson's Brownian motion, which describes the eigenvalues of certain matrix valued diffusions \cite{Dys}. This is equivalent, in the GUE case, to Brownian motions conditioned not to intersect via Doob's $h$-transform \cite{Dys, Gra}. Another example is an exclusion process, such as the totally asymmetric simple exclusion process and its Brownian analogue \cite{NQR,WFS}. These two examples are related; they can be realized as projections of dynamics on certain two-dimensional point processes, namely on Gelfand-Tsetlin patterns \cite{AOCW, BF, War}. The latter turn out to be determinantal point processes, thereby leading to determinant formulas for the laws of the former. In this article we establish limit theorems for the laws of the extremal particle in a class of noncolliding processes by way of such formulas.

The article has three parts, all of them tied together by a model of Brownian last passage percolation with drifts and a boundary. A similar model was first studied in \cite{Rah}.

In the first part, we consider the largest eigenvalue of a random matrix which consists of a matrix with equidistant real eigenvalues perturbed by a GUE matrix (studied in \cite{For, GP, Jo2, OCJ}). We derive the scaling limit of its largest eigenvalue as the matrix dimensions go to infinity, resulting in what appears to be a new probability distribution. This model is also closely related to Brownian motion in the symmetric space $GL(n, \mathbb{C})/U(n)$, which may be identified with Brownian motion over the space of Hermitian, positive-definite matrices (see \cite{Bia, OCJ}).

In the second part, we consider Dyson's Brownian motion for GUE started from general initial conditions.
For a suitable class of initial conditions, we prove that the scaling limit of the largest particle (largest eigenvalue) converges to the Airy process.
Claeys, Neuschel and Venker \cite{CNV} proves such a limit theorem and below we compare it with our result.
This is an universality result, as the Airy process governs extremal fluctuations in many models such as eigenvalues of random matrices \cite{CNV, EY, FHN}, random interface growth models in the KPZ universality class \cite{ACH, CH,Jo5,MQR,PS} and random tilings \cite{AH,Gor,Jo4}.

In the third part, we consider Hermitian Brownian motion over $n \times n$ matrices with a drift, and the largest eigenvalue of this process. Fitzgerald and Warren \cite{FW} proved that the all time maximum of the largest eigenvalue is given by a point-to-line last passage percolation value. We provide a Fredholm determinant formula for the law of this point-to-line last passage value. As a corollary, based on a connection to noncolliding Brownian bridges due to Nguyen and Remenik \cite{NR}, we derive a Fredholm determinant formula for the law of the largest eigenvalue of a matrix from the Laguerre Orthogonal Ensemble.

The rest of the Introduction will elaborate on these parts and present the main results.

\subsection{The largest eigenvalue in a random matrix model} \label{sec:arithmeticmodel}

An $n \times n$ matrix from the Gaussian Unitary Ensemble (GUE) takes the form
$$ H^{\mathrm{GUE}} = \frac{W + W^*}{\sqrt{2}}$$
where $W$ is an $n \times n$ matrix with i.i.d.~entries consisting of standard complex Gaussian random variables
($W_{i,j} = \frac{W^1_{i,j} + \mathbf{i}W^2_{i,j}}{\sqrt{2}}$ where $W^k_{i,j}$ are independent, standard real Gaussian random variables).
Consider the random matrix
\begin{equation} \label{eqn:Hreg}
    H(\tau) = H^{\mathrm{reg}} + \sqrt{\tau} \, H^{\mathrm{GUE}}, \quad \tau > 0,
\end{equation}
where $H^{\mathrm{reg}}$ is a (deterministic) Hermitian matrix whose eigenvalues are ``structured".
Think of \eqref{eqn:Hreg} as a matrix model whereby a structured matrix is perturbed by a small amount of disorder from the GUE.
What affect does the perturbation leave on the eigenvalues as $n$ tends to infinity?

This is of course a rather general question. Suppose the eigenvalues of $H^{\mathrm{reg}}$ are equidistant on the real line, forming an arithmetic progression.
Then \eqref{eqn:Hreg} is relevant to some questions in nuclear physics, being a toy
model for a perturbation of the harmonic oscillator; see \cite{For, GP} for motivation.
In \cite{Jo1} Johansson derives a formula for the eigenvalue distribution of $H(\tau)$ in terms of the eigenvalues of $H^{\mathrm{reg}}$;
see also \cite{BH1,BH2, Shc}. This formula has been useful to understand the universal behaviour of the eigenvalues of random Hermitian matrices; see \cite{EY} for a recent survey.
The article \cite{Jo1} establishes the universal behaviour of the eigenvalues of $H(\tau)$ in the bulk of the spectrum in the case
where $H^{\mathrm{reg}}$ itself is a random Hermitian matrix (a Wigner matrix).

In \cite{Jo2} the bulk eigenvalues of \eqref{eqn:Hreg} is investigated
assuming $H^{\mathrm{reg}}$ has equidistant eigenvalues, and an interesting correlation kernel is found in the large $n$ limit.
We are interested in the largest eigenvalue $\lambda_{max}(H(\tau))$ of $H(\tau)$ in \eqref{eqn:Hreg} assuming that the spectrum
of $H^{\mathrm{reg}}$ has equidistant real eigenvalues. If $\tau > 0$ remains fixed, then we may reduce to the case $\tau = 1$
because $H^{\mathrm{reg}}/\sqrt{\tau}$ will also have spectrum following an arithmetic progression. We find the following limit theorem.

\begin{thm} \label{thm:arithmetic}
Consider, for each $n$, the model \eqref{eqn:Hreg} and assume the eigenvalues of $H^{\mathrm{reg}} = H^{\mathrm{reg}}_n$ are
$\lambda^n_i = \lambda_1^n - \Delta \cdot (i-1)$ for $1 \leq i \leq n$ with a fixed $\Delta > 0$.
Define the rescaled random variable
$$ \lambda_{max}^n = \Delta \cdot (\lambda_{max}(H(1)) - \lambda_1^n) - \log(n-1).$$
Then, for each $a \in \R$,
$$ \lim_n \mathbf{Pr}(\lambda^n_{max} \leq a) = \dt{I - K_{\Delta}}_{L^2[a,\infty)}$$
where the integral kernel $K_{\Delta}$ is as follows.
$$K_{\Delta}(x,y) = \frac{1}{(2 \pi \mathbf{i})^2} \oint_{\gamma_{rec}}d\zeta \oint_{\gamma_{ver}} dz\,
    \frac{e^{\frac{\Delta^2}{2} z^2 -yz}}{e^{\frac{\Delta^2}{2}\zeta^2-x\zeta}} \cdot \frac{\Gamma(\zeta)}{\Gamma(z)}\cdot \frac{1}{z-\zeta}.$$
Here $\gamma_{ver}$ is the vertical contour $\{\Re(z) = 1\}$ oriented upwards, and $\gamma_{rec}$ is the counter clockwise oriented contour
$\{t \pm \mathbf{i}/2; t \leq 1/2\} \cup \{1/2 + \mathbf{i}t; |t| \leq 1/2\}$. Also, $\Gamma(z)$ is the Gamma function.
\end{thm}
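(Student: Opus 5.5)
Since nothing earlier in the paper gives a finite-$n$ formula for the law of $\lambda_{max}(H(1))$, the plan is to build one from the external-source (Brézin--Hikami/Johansson) structure of $H(1)$, and then take an asymptotic limit inside a Fredholm determinant.

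\emph{Step 1 (finite-$n$ Fredholm determinant).} The eigenvalues of $H(1)=H^{\mathrm{reg}}+H^{\mathrm{GUE}}$ are those of Dyson Brownian motion at time $1$ started from $\lambda^n_1,\dots,\lambda^n_n$. By Johansson's formula \cite{Jo1} they form a determinantal point process with a double contour integral kernel. After writing $z=\lambda_1^n+w$, it is of the form
$$\frac{1}{(2\pi\mathbf{i})^2}\oint dw\int dv\,\frac{e^{(v-y)^2/2}}{e^{(w-x)^2/2}}\prod_{i=1}^n\frac{v+\Delta(i-1)}{w+\Delta(i-1)}\cdot\frac{1}{v-w}.$$
Here the $w$-contour encircles the poles $-\Delta(i-1)$, $i=1,\dots,n$, and the $v$-contour is vertical and lies to their right. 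Equivalently, the same formula follows from the Brownian last passage percolation with drifts representation used in the paper: $\lambda_{max}(H(1))$ is a Brownian LPP value with boundary drifts $\lambda_i^n$. Then $\mathbf{Pr}(\lambda_{max}\le s)=\det(I-K_n)_{L^2(s,\infty)}$.

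\emph{Step 2 (change of variables).} Substitute $w=\Delta\zeta$, $v=\Delta z$, and rescale space by $x=\lambda_1^n+(x'+\log(n-1))/\Delta$. The product becomes
$$\prod_{i=1}^n\frac{z+i-1}{\zeta+i-1}=\frac{\Gamma(z+n)\,\Gamma(\zeta)}{\Gamma(\zeta+n)\,\Gamma(z)}.$$
By Stirling's formula, $\Gamma(z+n)/\Gamma(\zeta+n)\approx n^{z-\zeta}$. The factor $n^{z-\zeta}$ combines with the shift $\log(n-1)$ in the Gaussian exponents; the discrepancy between $\log n$ and $\log(n-1)$ is $o(1)$. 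The quadratic term $e^{\Delta^2 z^2/2}/e^{\Delta^2\zeta^2/2}$ stays, and the cross term $e^{-\Delta z x'}$ appears after absorbing $\Delta$ into the spatial variable. The spatial rescaling by $1/\Delta$ is a conjugation and a change of measure, so it leaves the Fredholm determinant unchanged. We then expect
$$K_n\to K_\Delta(x,y)=\frac{1}{(2\pi\mathbf{i})^2}\oint d\zeta\oint dz\,\frac{e^{\Delta^2z^2/2-yz}}{e^{\Delta^2\zeta^2/2-x\zeta}}\cdot\frac{\Gamma(\zeta)}{\Gamma(z)}\cdot\frac{1}{z-\zeta}$$
pointwise. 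At finite $n$ the $\zeta$-contour encloses the poles $0,-1,\dots,-(n-1)$. We deform it to the rectangle-like contour $\gamma_{rec}$, which surrounds all nonpositive integers by running out to $-\infty$ along $\Im\zeta=\pm1/2$. This is allowed because $\Gamma(\zeta+n)^{-1}$ kills the poles beyond $-(n-1)$, and it is harmless in the limit because $1/\Gamma(\zeta+n)$ decays. The $z$-contour is placed at $\Re z=1$.

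\emph{Step 3 (trace-class convergence), the main obstacle.} Pointwise convergence does not suffice; I would prove convergence of the Fredholm determinants on $L^2[a,\infty)$ via uniform bounds $|K_n(x,y)|\le Ce^{-c(x+y)}$ for $x,y\ge a$, and then apply dominated convergence to the Fredholm series (Hadamard's inequality). The decay in $y$ comes from $\Re z=1>0$, giving $|e^{-yz}|=e^{-y}$. The decay in $x$ comes from $\Re\zeta\le1/2$ on $\gamma_{rec}$; on the far-left part, where $\Re\zeta\to-\infty$, the factor $e^{x\zeta}$ actually helps for $x>0$. The delicate issue is uniform integrability of $\Gamma(\zeta)\Gamma(z+n)/(\Gamma(z)\Gamma(\zeta+n)n^{z-\zeta})$ along the contours. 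On the vertical line, $e^{\Delta^2z^2/2}$ gives Gaussian decay in $\Im z$, which dominates the polynomial growth of the Gamma ratios, since Stirling holds uniformly for $|z|\le n^{1/2}$ with tails controlled crudely. On the horizontal rays, $\Gamma(\zeta)$ decays like $1/|\zeta|!$ and $e^{-\Delta^2\zeta^2/2}$ is bounded because $\Re\zeta^2>0$ there. The ratio $\Gamma(\zeta)/\Gamma(\zeta+n)\cdot n^{\zeta}$ requires a uniform estimate for $\Re\zeta\in[-n,1/2]$, and I would handle it by splitting into $|\zeta|\le n^{1/3}$, where Stirling applies, and the rest, where the product formula is bounded by $\prod_i|\zeta+i-1|^{-1}$ directly. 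The pole at $z=\zeta$ is avoided because $\gamma_{rec}$ stays at $\Re\le1/2$.
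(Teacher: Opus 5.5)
Your route is the paper's. You start from the narrow-wedge kernel of Proposition \ref{prop:nw} (equivalently Johansson's kernel) and substitute $w=\Delta\zeta$, $v=\Delta z$ with the $\log(n-1)$ shift. You then turn $\prod_{i=1}^n(z+i-1)/(\zeta+i-1)$ into a Gamma ratio (the paper uses the Weierstrass product where you use Stirling), deform to $\gamma_{rec}$, and finish with Hadamard's inequality and dominated convergence.

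\textbf{The gap is in Step 3.} The bound $|K_n(x,y)|\le Ce^{-c(x+y)}$ on $[a,\infty)^2$ is false, and your reason for decay in $x$ has the sign backwards. The $\zeta$-integrand carries $e^{x\zeta}$, and on the vertical side of $\gamma_{rec}$ you have $|e^{x\zeta}|=e^{x/2}$, which grows in $x$. Moving the contour cannot help. $\gamma_{rec}$ must enclose $\zeta=0$, where the residue is $1/z$, independent of $x$. Hence $K_\Delta(x,y)\to\frac{1}{2\pi\mathbf{i}}\int_{\Re z=1}e^{\frac{\Delta^2}{2}z^2-yz}\,\Gamma(z+1)^{-1}\,dz$ as $x\to\infty$, and this is not identically zero in $y$. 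Likewise $K_n$ has an $x$-independent piece coming from the pole at $\zeta=0$.

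\textbf{The fix.} Your contours give $|K_n(x,y)|\le Ce^{x/2-y}$ uniformly in $n$, and this is enough only because $\Re z=1$ exceeds $\sup_{\gamma_{rec}}\Re\zeta=1/2$. You can then do either of the following.
\begin{enumerate}
\item Conjugate by $e^{\frac23(y-x)}$, as the paper does, to get the bound $Ce^{-x/6-y/3}$.
\item Factor $e^{x_i/2}$ out of rows and $e^{-x_j}$ out of columns. Hadamard then gives $|\det[K_n(x_i,x_j)]|\le k^{k/2}C^k\prod_i e^{-x_i/2}$, which is integrable on $[a,\infty)^k$.
\end{enumerate}

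\textbf{A smaller slip.} $1/\Gamma(\zeta+n)$ does not decay along the horizontal rays of $\gamma_{rec}$; it grows factorially as $\Re\zeta\to-\infty$. What justifies pushing the contour to $-\infty$ at fixed $n$ is the Gaussian factor $e^{-\frac{\Delta^2}{2}\zeta^2}$. This beats the merely polynomial decay of $\Gamma(\zeta)/\Gamma(\zeta+n)=\prod_{i=1}^n(\zeta+i-1)^{-1}$ and is helped by the factor $(n-1)^{\zeta}$; this is exactly the paper's argument with the left edge at $-L$, $L\to\infty$.
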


\begin{rem}
    The map $a \mapsto \dt{I-K_{\Delta}}_{L^2[a,\infty)}$ should be the c.d.f.~of a probability measure although it is not trivial to
    verify this, in particular, to show that $\dt{I-K_{\Delta}}_{L^2[a,\infty)} \to 0$ as $a \to -\infty$.
\end{rem}

\subsubsection{Brownian motion over Hermitian, positive-definite matrices}
Let $GL(n, \mathbb{C})$ be the group of $n \times n$ invertible matrices with entries from $\mathbb{C}$.
A left-invariant Brownian motion on $GL(n,\mathbb{C})$ is a $GL(n,\mathbb{C})$-valued stochastic process $G_t$
defined by the Stratonovich integral
$$ G_t = I + \int_0^t G_s \circ dW_s$$
where $W_t$ is Brownian motion in the vector space of $n \times n$ matrices with entries from $\mathbb{C}$.
(In other words, the Brownian motion on the Lie group $GL(n,\mathbb{C})$ is driven by the Brownian motion over its Lie algebra $\mathfrak{gl}(n,\mathbb{C})$.)
It has the property that for every $s \geq 0$, $(G_s^{-1}\cdot G_{t+s}, t \geq 0)$ has the same law of $(G_t, t \geq 0)$ and is independent of $(G_t, 0 \leq t \leq s)$.
A right-invariant Brownian motion over $GL(n, \mathbb{C})$ is the process $G_t^*$.

The process \begin{equation} \label{eqn:Yt} Y_t = G_t G_t^* \end{equation}
may be regarded as Brownian motion on the space of $n \times n$ Hermitian, positive-definite matrices \cite{OCJ}.
Indeed, one may identify the space $P(n)$ of $n \times n$ Hermitian positive-definite matrices as the symmetric
space $GL(n,\mathbb{C})/U(n)$ (essentially by the polar decomposition), and then Brownian motion on $GL(n,\mathbb{C})$ can be used to define Brownian motion on $P(n)$. See \cite{Bia, OCJ} for a discussion on this and more broadly of Brownian motion on symmetric spaces. We note further that $G_t G_t^*$ has the same eigenvalues as $G_t^* G_t$.

Let $\lambda_t^1 \geq \cdots \geq \lambda_t^n > 0$ be the eigenvalues of $Y_t$.
The log-transformed eigenvalues
$$ \gamma^i_t = \log( \lambda_t^i).$$
obey the system of SDEs (see \cite[Corollary 3.3]{OCJ})
$$ d\gamma^i_t = d\beta_t^i + \sum_{j: j\neq i} \mathrm{coth}(\gamma^i_t - \gamma^j_t)\, dt$$
where $\beta_t$ is Brownian motion on $\R^n$. It is shown in \cite[Proposition 4.2]{OCJ} that for
the largest eigenvalue at time $t=1$, $$\gamma^1_1 \stackrel{law}{=} \lambda_{max}\left (\mathrm{diag}(n-1,n-3,\ldots, -(n-3),-(n-1)) + H^{\mathrm{GUE}}\right).$$
In other words, in the notation of Theorem \ref{thm:arithmetic}, it has the law of the largest eigenvalue of \eqref{eqn:Hreg} with $\lambda_1^n = n-1$ and $\Delta = 2$.
Theorem \ref{thm:arithmetic} then implies
\begin{cor}
    In the limit as $n \to \infty$,
    $$ \mathbf{Pr}(\gamma^1_1 \leq n-1 + \frac{a + \log(n-1)}{2}) \to \dt{I - K_2}_{L^2[a,\infty)}.$$
\end{cor}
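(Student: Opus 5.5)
The corollary follows by specializing Theorem \ref{thm:arithmetic} and rewriting the event $\{\gamma^1_1 \le \cdots\}$ in terms of the rescaled variable $\lambda^n_{max}$.

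\textbf{Step 1: identify the parameters.} By \cite[Proposition 4.2]{OCJ}, quoted above, $\gamma^1_1$ has the same law as $\lambda_{max}(H(1))$, where $H(1) = H^{\mathrm{reg}} + H^{\mathrm{GUE}}$ and
$$H^{\mathrm{reg}} = \mathrm{diag}(n-1, n-3, \ldots, -(n-1)).$$
The eigenvalues of this $H^{\mathrm{reg}}$ are $\lambda^n_i = (n-1) - 2(i-1)$. This is exactly the arithmetic progression of Theorem \ref{thm:arithmetic} with $\lambda^n_1 = n-1$ and $\Delta = 2$. Since $\Delta = 2$ does not depend on $n$, the hypotheses of the theorem hold for every $n \ge 2$.

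\textbf{Step 2: rewrite the event.} With these parameters the rescaled variable is
$$\lambda^n_{max} = 2\bigl(\lambda_{max}(H(1)) - (n-1)\bigr) - \log(n-1).$$
Because $x \mapsto 2(x-(n-1)) - \log(n-1)$ is strictly increasing, the event $\{\lambda_{max}(H(1)) \le n-1 + \frac{a+\log(n-1)}{2}\}$ is the same as the event $\{\lambda^n_{max} \le a\}$. By the equality in law from Step 1,
$$\mathbf{Pr}\Bigl(\gamma^1_1 \le n-1 + \frac{a+\log(n-1)}{2}\Bigr) = \mathbf{Pr}(\lambda^n_{max} \le a).$$

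\textbf{Step 3: pass to the limit.} Theorem \ref{thm:arithmetic} with $\Delta = 2$ gives $\mathbf{Pr}(\lambda^n_{max} \le a) \to \dt{I-K_2}_{L^2[a,\infty)}$ for each fixed $a \in \R$, which is the claim.

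The only point needing care is the matching of normalizations. The GUE in \cite{OCJ} should have the same variance convention as the $H^{\mathrm{GUE}}$ defined here, namely $H^{\mathrm{GUE}} = (W+W^*)/\sqrt{2}$, so that the perturbation is $H(1)$ with $\tau = 1$. If the variance conventions differed, the same argument would still apply after rescaling by $\sqrt{\tau}$: as noted before Theorem \ref{thm:arithmetic}, dividing by $\sqrt{\tau}$ preserves the arithmetic-progression structure but changes $\Delta$ and $\lambda^n_1$. I take the statement quoted from \cite{OCJ} as already in the paper's normalization, so there is no real obstacle.
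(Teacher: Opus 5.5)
Your proposal is correct and is exactly the paper's argument: use the quoted identity in law for $\gamma^1_1$ to specialize Theorem~\ref{thm:arithmetic} to $\lambda^n_1 = n-1$, $\Delta = 2$, and observe that $\{\lambda^n_{max} \le a\}$ is precisely the event $\bigl\{\lambda_{max}(H(1)) \le n-1 + \frac{a+\log(n-1)}{2}\bigr\}$. The paper records this as an immediate consequence and, like you, takes the quoted identity as already being in its own GUE normalization.
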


\subsection{Dyson's Brownian motion for GUE and universality of the Airy process} \label{sec:DBM}
Let $H(t)$ be Brownian motion in the space of $n \times n$ Hermitian matrices.
This can be expressed as
\begin{equation} \label{eqn:hbm}
H(t) = \frac{W(t) + W^*(t)}{\sqrt{2}}\end{equation}
where $W(t)$ is a matrix whose entries are i.i.d.~standard complex-valued Brownian motions.
So, $W_{i,j}(t) = \frac{W^1_{i,j}(t) + \mathbf{i}W^2_{i,j}(t)}{\sqrt{2}}$ where $W^{k}_{i,j}$ are independent, standard real-valued Brownian motions.

Let $\lambda_1(t) > \cdots > \lambda_n(t)$ denote the eigenvalues of $H(t)$. Dyson \cite{Dys} showed that these eigenvalues satisfy the system of SDEs
\begin{equation} \label{eqn:dysonsde}
    d \lambda_i(t) = d B_i(t) + \sum_{j: j \neq i} \frac{1}{\lambda_i(t)-\lambda_j(t)}\, dt
\end{equation}
where $B_1, \ldots, B_n$ are independent, standard real-valued Brownian motions.
This process of eigenvalues is known as Dyson's Brownian motion for GUE.

Let $W_1(t), \ldots, W_n(t)$ be independent, standard real-valued Brownian motions.
We can condition them to not intersect on $(0,\infty)$ by means of Doob's $h$-transform.
The harmonic function is the Vandermonde determinant
$$ h(x_1, \ldots, x_n) = \prod_{i< j} (x_i-x_j).$$
Upon conditioning, the process takes values in the Weyl chamber
$$ \mathbb{W} = \{ (x_1, \ldots, x_n) \in \R^n: x_1 > x_2 > \cdots > x_n\}.$$
It is known, \cite{Dys,Gra}, that these conditioned Brownian motions have the same
law as Dyson's Brownian motion for GUE \eqref{eqn:dysonsde}.

\subsubsection{The Airy process}
Consider the largest particle $\lambda_{max}(t) = \lambda_1(t)$ in Dyson's Brownian motion \eqref{eqn:dysonsde}.
Under the rescaling
$$ \bar{\lambda}_n(t) = \frac{\lambda_{max} \left( \frac{1 - 2tn^{-1/3}}{n}\right) - 2 + 2tn^{-1/3}}{n^{-2/3}}, \quad t \in \R,$$
the largest eigenvalue process converges to a limit process which is called the (parabolic) Airy process $\mathcal{A}(t)$.
The law of $\mathcal{A}(t)+t^2$ for every $t \in \R$ is the GUE Tracy-Widom distribution \cite{TW}.

In order to define $\mathcal{A}(t)$ we need to introduce the ``extended" Airy kernel \cite{FHN, PS}.
Let $\mathrm{Ai}(x)$ denote the Airy function:
$$ \mathrm{Ai}(x) = \frac{1}{\pi} \int_{0}^{\infty}dt\, \cos \left( \frac{t^3}{3} + xt\right).$$
The extended Airy kernel is an integral kernel on $\R^2$ defined by the formula
\begin{equation} \label{eqn:airykernel}
    K_{Airy}(t_1, x; t_2, y) = \begin{cases}
        \int_0^{\infty} dz\, e^{z(t_2-t_1)}\mathrm{Ai}(x+z) \mathrm{Ai}(y+z), & t_2 \leq t_1 \\
        - \int_{-\infty}^0dz\, e^{z(t_2-t_1)} \mathrm{Ai}(x+z) \mathrm{Ai}(y+z), & t_2 > t_1 
    \end{cases}.
\end{equation}
The extended Airy kernel defines a determinantal point process on $\R^2$ called the Airy line ensemble \cite{CH,PS}.
The Airy process is the top line of this ensemble. As such, its finite dimensional laws are given by Fredholm determinants
as follows. For $t_1 < t_2 < \cdots < t_m \in \R$ and $\xi_1, \ldots, \xi_m \in \R$,
\begin{equation} \label{eqn:airyfdds}
    \mathbf{Pr}(\mathcal{A}(t_i) \leq \xi_i, 1 \leq i \leq m) = \dt{I - K}_{L^2(\{1,\ldots,m\} \times [0,\infty))}
\end{equation}
where
$$ K(i,x;j,y) = K_{Airy}(t_i, x+\xi_i+t_i^2; t_j, y+\xi_j+t_j^2).$$
The finite dimensional laws form a consistent family and determine the law of $\mathcal{A}(t)$.

It is possible to rewrite the extended Airy kernel as a double contour integral.
Define the heat kernel
$$ e^{t \partial^2/2}(x,y) = \frac{1}{\sqrt{2 \pi t}} e^{- \frac{(x-y)^2}{2t}}$$
for $t > 0$. Define
\begin{equation} \label{eqn:Jairy}
    J_{Airy}(t_1, x; t_2, y) = \frac{1}{(2 \pi \mathbf{i})^2}\oint_{\Gamma_{-\delta}}dw\, \oint_{\Gamma_{\delta}}dz\, 
    \frac{e^{\frac{1}{3}z^3 + t_2 z^2 - yz}}{e^{\frac{1}{3}w^3 + t_1w^2-xw}} \cdot \frac{1}{z-w}
\end{equation}
where $\Gamma_d$ denotes the vertical contour $\{\Re(z) = d\}$ oriented upwards. The parameter $\delta > \max(|t_1|,|t_2|)$.
We have that, see \cite[Proposition 2.3]{Jo5},
\begin{equation} \label{eqn:Kixjy}
    K(i,x;j,y) = - e^{(t_j-t_i)\partial^2}(x+\xi_i, y+\xi_j)\ind{t_j > t_i} + J_{Airy}(t_i, x+\xi_i; t_j, y+\xi_j).
\end{equation}
There is some freedom in the choice of contours $\Gamma_{\pm \delta}$ as these may be deformed without changing the value of the integral.
For instance, $\Gamma_{\delta}$ can be deformed to the wedge-shaped contour $\{\delta_1+e^{\pm \mathbf{i}\frac{\pi}{3}}t, t \geq 0\}$ for any $\delta_1 \in \R$
and $\Gamma_{-\delta}$ can be deformed to $\{\delta_2 +e^{\pm \mathbf{i}\frac{\pi}{6}}, t \leq 0\}$ for any $\max\{|t_1|,|t_2|\} < \delta_2 < \delta_1$.

\subsubsection{Universality}
Let $H_0$ be an $n \times n$ Hermitian matrix.
Consider the process
$$X(t) = H(t) + H_0$$
and its largest eigenvalue $\lambda_{max}(X(t))$.
If $H_0$ has spectral decomposition $H_0 = U^* \Lambda U$ where $\Lambda = \mathrm{diag}(\lambda)$ is the diagonal matrix of eigenvalues and $U$ is unitary, then $U X(t) U^* = UH(t)U^* + \Lambda$ has the same eigenvalues as $X(t)$. Since $U H(t) U^*$ has the same law of $H(t)$, it follows that
$$ \lambda_{max}(X(t)) \stackrel{law}{=} \lambda_{max}(H(t) + \Lambda).$$
We expect that for suitably generic matrices $H_0$, $\lambda_{max}(X(t))$, properly rescaled, will converge to the Airy process.
More generally, the rescaled process of largest, 2nd largest, 3rd largest and so on eigenvalues of $X(t)$ should converge to the Airy line ensemble.

For a single time $t = \frac{\tau}{n}$, the asymptotic fluctuations of $\lambda_{max}(X(t))$ are indeed governed by the GUE Tracy-Widom distribution for suitable classes of matrices $H_0$ (both deterministic and random, most notably the Wigner matrices); see \cite{EY, Jo3, Shc, Sos, TV} and references therein.
In \cite[Theorem 1.2]{CNV} it is proven that $\lambda_{max}(X(t))$, appropriately rescaled, does converge to the Airy process under suitable assumptions on $H_0$.
These assumptions are formulated in terms of the limiting empirical distribution of the eigenvalues of $H_0$.
We prove the following Airy process limit theorem with assumptions expressed in terms of the eigenvalues of $H_0$ themselves.
Our assumptions should be close to those in \cite{CNV} as we use similar determinant formulas, although they don't appear to be the same.
In Proposition \ref{prop:Fab} we provide a sufficient criterion for our assumptions to hold in terms of the limiting empirical distribution of the eigenvalues of $H_0$.

Let $\nu = \{\nu_1, \ldots, \nu_n\} \subset \R$ be a collection of points counted with multiplicity (a point cloud).
For example, it could be the eigenvalues of an $n \times n$ Hermitian matrix.
Associate to $\nu$ the following constants. Let $b(\nu) > \max_j \nu_j$ be the unique real number such that
\begin{equation} \label{eqn:bnu}
    1 = \frac{1}{n} \sum_{j=1}^n \frac{1}{(b(\nu)-\nu_j)^2}.
\end{equation}
Let $a(\nu)$ be defined by
\begin{equation} \label{eqn:anu}
    a(\nu) = b(\nu) + \frac{1}{n}\sum_{j=1}^n \frac{1}{b(\nu)-\nu_j}.
\end{equation}
Let $d(\nu)$ be defined by
\begin{equation} \label{eqn:dnu}
    d(\nu) = \left (\frac{1}{n}\sum_{j=1}^n \frac{1}{(b(\nu)-\nu_j)^3} \right)^{1/3}.
\end{equation}

Suppose $0 < \alpha \leq \beta < \infty$. Define the set of point clouds
$$ F(\alpha,\beta) = \{ \nu = \{\nu_1, \ldots, \nu_n\}\;\text{for some}\; n: \alpha \leq b(\nu)-\nu_j  \leq \beta\;\text{for}\; 1\leq j \leq n\}.$$
Note that if $\nu \in F(\alpha, \beta)$ then $1/\beta \leq d(\nu) \leq 1/\alpha$.

Let $H^0_n$ be a sequence of $n \times n$ Hermitian matrices with eigenvalues $\nu^n = \{\nu^n_1, \ldots, \nu^n_n\}$.
Denote $H_n(t)$ to be Brownian motion in the space of $n \times n$ Hermitian matrices (see \eqref{eqn:hbm}).
Let $b_n = b(\nu^n)$, $a_n = a(\nu^n)$ and $d_n = d(\nu^n)$.
Consider the process
\begin{equation} \label{eqn:lambdant}
    \lambda_n(t) = \lambda_{max}\left (H_n \left(\frac{1-2d_n^2tn^{-1/3}}{n}\right) + H^0_n \right ), \quad t \leq \frac{n^{1/3}}{2d_n^2}.
\end{equation}
 Assume there exists $0 < \alpha \leq \beta < \infty$
such that $\nu^n \in F(\alpha,\beta)$ for all sufficiently large $n$.
\begin{thm} \label{thm:airyconvergence}
    Under the assumptions above, as $n \to \infty$, we have convergence in law of the process
    $$ \bar{\lambda}_n(t) = \frac{\lambda_n(t) - a_n - 2t d_n^2(b_n-a_n)n^{-1/3}}{d_n n^{-2/3}} \to \mathcal{A}(t).$$
\end{thm}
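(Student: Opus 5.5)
We would realize the largest eigenvalue of $H_n(s)+H^0_n$ as the top curve of Dyson's Brownian motion started from $\nu^n$, equivalently of $n$ Brownian motions started at $\nu^n_1,\dots,\nu^n_n$ and conditioned not to intersect. The first step is to invoke the determinantal structure of this process, via the Gelfand--Tsetlin/last passage representation the paper builds from the Brownian LPP model, or directly from the Karlin--McGregor/Eynard--Mehta formalism. This gives an extended kernel for the eigenvalues at times $s_1<\dots<s_m$:
\[
K_n(s,x;s',y) = -e^{(s'-s)\partial^2/2}(x,y)\ind{s'>s} + \frac{1}{(2\pi\mathbf{i})^2}\oint dw\oint dz\, \frac{e^{s' z^2/2 - yz}}{e^{s w^2/2 - xw}}\,\frac{1}{z-w}\prod_{j=1}^n \frac{z-\nu_j}{w-\nu_j}.
\]
Here the $w$-contour encircles the $\nu_j$ and the $z$-contour is a vertical line to the right. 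The finite dimensional laws of the top particle are then $\dt{I-K_n}$ on $L^2(\{1,\dots,m\}\times(\xi_i,\infty))$. By \eqref{eqn:airyfdds}--\eqref{eqn:Kixjy}, finite dimensional convergence reduces to showing that the rescaled kernel converges to $K$ in trace norm, after a harmless conjugation.

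Next comes the rescaling and steepest descent. We substitute $s = (1-2d_n^2 t n^{-1/3})/n$, $x = a_n + 2td_n^2(b_n-a_n)n^{-1/3} + d_n n^{-2/3}\xi$, and $z = b_n + d_n^{-1}n^{-1/3}\zeta$ (similarly for $w$), with the variables in the exponent scaled by $n$. The exponent becomes $n f_n(z)$ with
\[
f_n(z)=\tfrac{z^2}{2}-a_n z+\tfrac1n\sum_j\log(z-\nu_j),
\]
plus lower order $t$- and $\xi$-dependent terms. By \eqref{eqn:bnu} and \eqref{eqn:anu}, $f_n'(b_n)=f_n''(b_n)=0$, and $f_n'''(b_n)=2d_n^3$ by \eqref{eqn:dnu}. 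So $b_n$ is a double critical point, and the Taylor expansion gives $nf_n(z)-nf_n(b_n)\approx \zeta^3/3$. The $t$-terms produce $t\zeta^2$, with the drift term $2td_n^2(b_n-a_n)n^{-1/3}$ exactly cancelling the linear cross term, and the $\xi$-terms produce $-\xi\zeta$. This recovers $J_{Airy}$ in \eqref{eqn:Jairy}. The heat kernel term rescales directly to $-e^{(t_j-t_i)\partial^2}$. The uniform bounds $\alpha\le b_n-\nu_j\le\beta$ make all Taylor remainders uniform in $n$.

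The main obstacle is constructing global steepest descent contours for $\Re f_n$ with uniform control, using only the membership $\nu^n\in F(\alpha,\beta)$. We would try the following.
\begin{itemize}
\item For the $z$-contour, take the vertical line $\Re z=b_n+\epsilon n^{-1/3}$ near $b_n$, deformed locally into the wedge at angles $\pm\pi/3$. Along vertical lines, $\Re f_n(b+\mathbf{i}y)-\Re f_n(b) = -y^2/2+\frac1{2n}\sum_j\log\bigl(1+y^2/(b-\nu_j)^2\bigr)$. By \eqref{eqn:bnu} and the inequality $\log(1+u)<u$, this is strictly negative and of order $-cy^4$ for small $y$ and $-cy^2$ for large $y$. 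These estimates are uniform thanks to $b_n-\nu_j\in[\alpha,\beta]$.
\item For the $w$-contour, use a closed loop around $[b_n-\beta,b_n-\alpha]$ passing through $b_n$ at angles $\pm2\pi/3$, for example a circle centered at $b_n - r$ with $r$ chosen in terms of $\alpha$ and $\beta$. We would verify that $\Re f_n$ is maximized at $b_n$ along it, with quadratic-cubic decay away from it, using the convexity in $\theta$ of $\frac1n\sum_j \log|w-\nu_j|$ along the circle.
\end{itemize}

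With these bounds, dominated convergence gives pointwise kernel convergence together with exponential decay in $\xi$ and $\eta$ uniformly in $n$. Combined with the standard factorization of the kernel into a product of Hilbert--Schmidt operators, this yields trace-class convergence and hence convergence of the Fredholm determinants. Tightness for convergence in law as a process (in the topology of continuous functions) would then follow from Brownian Gibbs/comparison arguments or a Kolmogorov criterion on increments. Alternatively, we would interpret the statement as convergence of finite dimensional distributions, which is what the determinant analysis gives directly.
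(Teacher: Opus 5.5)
The steepest-descent part of your plan is the paper's, but there are two genuine gaps: the kernel you start from describes a different process, and the closed-contour construction does not work as proposed. What does match the paper is the double critical point $b_n$ with $f_n'''(b_n)=2d_n^3$, the order-$n^{2/3}$ term $d_n^2t(z^2-2b_nz)$ producing $t\zeta^2$, the $-\xi\zeta$ term, and the vertical-line bound via $\log(1+u)<u$ and \eqref{eqn:bnu}.

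\textbf{The kernel.} What you display is exactly the kernel of Proposition~\ref{prop:nw} with drifts $\mu_j=\nu_j$. That is the kernel of $\lambda_{\max}(H(s)+s\,\mathrm{diag}(\nu))$, i.e. noncolliding Brownian motions with drifts $\nu_j$ started at $0$. It is not the kernel of $\lambda_{\max}(H_n(s)+H^0_n)$; already as $s\to0$ the former tends to $0$ and the latter to $\max_j\nu_j$. With your kernel the next step fails. At $s\approx1/n$, $y\approx a_n$ the exponent is $\frac{s}{2}z^2-yz+\sum_j\log(z-\nu_j)\approx \frac{z^2}{2n}-a_nz+\sum_j\log(z-\nu_j)$. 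This is not $nf_n(z)$ and has no double critical point at $b_n$, and no rescaling of $z$ repairs it (e.g.\ $z\mapsto nz$ turns the $\nu_j$ into $\nu_j/n$).

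The missing step is the time inversion the paper uses: $H(s)+H_0\stackrel{law}{=}s\bigl(H(1/s)+s^{-1}H_0\bigr)$ as processes. Hence $\lambda_{\max}(H(s)+H_0)\stackrel{law}{=}s\,\lambda_{\max}(M(1/s))$ with $M(u)=H(u)+uH_0$, and $\lambda_{\max}(M(\cdot))$ is the BLPP with drifts $\nu_j$. The correct kernel (Proposition~\ref{prop:Hermitiankernel}) is the one of Proposition~\ref{prop:nw} taken at times $1/s_i$ and positions $x/s_i$, with the heat-kernel indicator reversed to $\ind{s_j<s_i}$. Only then is the Gaussian factor $e^{z^2/(2s)-yz/s}\approx e^{n(z^2/2-a_nz)}$, which combines with $\prod_j(z-\nu_j)$ into $e^{nf_n(z)}$. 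The heat-kernel block then carries a drift $b_n$ in the inverted time, which must be removed by the conjugation $c_n(i,x)$ before it rescales to $e^{(\tau_j-\tau_i)\partial^2}$.

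\textbf{The closed contour.} You rightly call this the crux. Write $f_n(w)-f_n(b_n)=\frac1n\sum_jF\bigl((w-b_n)/d_j\bigr)$ with $d_j=b_n-\nu_j$ and $F(\zeta)=\frac{\zeta^2}{2}-\zeta+\log(1+\zeta)$. On the closed contour you need $\Re f_n$ \emph{minimized} at $b_n$, not maximized, uniformly over $F(\alpha,\beta)$.

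\begin{itemize}
\item A circle through $b_n$ has a vertical tangent there, not a corner at $\pm2\pi/3$. So $\Re\zeta^3$ is only of order $|\zeta|^4$ along it, and after the $n^{-1/3}$ rescaling the cubic gives only $O(n^{-1/3})$ decay. Meanwhile the denominator factor $e^{-\tau\zeta^2}$ grows like $e^{\tau v^2}$ for $\tau>0$, so the integrand reaches size $e^{cn^{1/3}}$.
\item Globally, a circle centred at $b_n-r$ keeps $\Re F\ge0$ near $b_n$ only if $r<2\sum_jd_j^{-3}/\sum_jd_j^{-4}$. Enclosing the poles needs $2r>\max_jd_j$, and these conflict for clouds with most $d_j\approx\alpha$ and a few $d_j\approx\beta\gg\alpha$.
\item Rays at $\pm2\pi/3$ cannot simply be continued outward, since $\Re\zeta^2<0$ there and $\Re F(\rho e^{2\pi\mathbf{i}/3})\to-\infty$.
\item The convexity you invoke is false: $\theta\mapsto\log|w-\nu_j|$ is concave near $\theta=0$ when $\nu_j$ lies left of the centre.
\end{itemize}

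The device the paper takes from \cite{Jo3} is the scale invariance you already exploited on the vertical line. On a ray $b_n+\rho e^{\mathbf{i}\phi}$, $\Re f_n-f_n(b_n)$ is an average over $j$ of one universal function of $u_j=\rho/d_j\in[\rho/\beta,\rho/\alpha]$. For $\phi=\pm5\pi/6$ this function is $\psi(u)=\frac{\sqrt3}{2}u+\frac{u^2}{4}+\frac12\log(1-\sqrt3u+u^2)$. It satisfies $\psi'(u)=u^3/\bigl(2(1-\sqrt3u+u^2)\bigr)>0$ and $\psi(u)\sim u^4/8$. So take the left-opening wedge at angles $\pm5\pi/6$ with vertex at $b_n+\delta_2d_n^{-1}n^{-1/3}$, where $\max_i|\tau_i|<\delta_2<\delta_1$; the shift controls the $\tau\zeta^2$ term. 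This wedge gives \eqref{eqn:zdecay} and the tail estimates uniformly in $n$.
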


Proposition \ref{prop:Fab} gives a criterion to check when a sequence of point clouds belong to $F(\alpha,\beta)$ for suitable $\alpha$ and $\beta$.

\subsection{Noncolliding Brownian bridges and point-to-line last passage percolation} \label{sec:flat}

Consider the following model. Let $b: [0,\infty) \to \mathbb{R}$ be a continuous function with $b(0) = 0$. Let $\mu_1,\mu_2,\ldots$ be a sequence of real numbers.
Let $(B^{\mu}_k(t): t \geq 0)$ for $k = 1, 2, 3, \ldots$ be a collection of independent Brownian motions such that $B^{\mu}_k$ has drift $\mu_k$:
$$ B^{\mu}_k(t) = B^{\mathrm{st}}_k(t) + \mu_k t$$
where $B^{\mathrm{st}}_k$ is a standard real-valued Brownian motion.

The Brownian last passage percolation (BLPP) with boundary $b$ is a process $BLPP(b;(t,m))$ for $t \geq 0$ and $m \geq 1$
defined as follows:
\begin{equation} \label{eqn:BLPP}
BLPP(b;(t,m)) = \max_{0 \leq t_0 \leq t_1 \leq \cdots \leq t_m = t} b(t_0) + \sum_{k=1}^m B^{\mu}_k(t_k) - B^{\mu}_k(t_{k-1}).
\end{equation}

Classical BLPP considers the so-called narrow wedge boundary condition whereby $b(0)=0$ and $b(t) = -\infty$ otherwise. In this case,
$$BLPP((0,0);(t,m)) = \max_{0 = t_0 \leq t_1 \leq \cdots \leq t_m = t} \sum_{k=1}^m B^{\mu}_k(t_k) - B^{\mu}_k(t_{k-1}).$$
Of course the narrow wedge in not a continuous function, but it can be approximated with the functions $b_{L}(t) = - Lt$ in the limit
as $L \to \infty$, upon which it is easy to check that $BLPP(b_L; (t,m))$ converges to $BLPP((0,0);(t,m))$ almost surely.

Consider the narrow wedge boundary with all drifts equal to zero. Then the random variable $BLPP((0,1);(1,m))$ has the same law as the largest eigenvalue of an $m\times m$ GUE random matrix \cite{GTW}. More generally, the process $m \mapsto BLPP((0,1);(1,m))$ has the law of the largest eigenvalue of the minors of an infinite GUE random matrix \cite{Bar}.

The process $t \mapsto BLPP((0,0);(t,m))$ has an interpretation in terms of noncolliding Brownian motions.
Suppose $(\hat{B}_1 > \hat{B}_2 > \cdots > \hat{B}_m)$ are $m$ independent Brownian motions conditioned not to intersect in the sense of Doob's $h$-transform with harmonic function $h(x_1, \ldots, x_n) = \prod_{i < j}\, (x_i-x_j)$ on the domain $\mathbb{W} = \{(x_1,x_2,\ldots, x_n) \in \R^n: x_1 > x_2 > \ldots > x_n\}$. Then $$\hat{B}_1(t) \stackrel{d}{=} BLPP((0,0);(t,m))$$ as a process in $t$ \cite{OY}. Furthermore, $\hat{B}_1$ has the law of the trajectory of the top particle among $m$ particles performing Dyson's Brownian motion for GUE \cite{Gra}.

Another boundary condition of interest is the flat boundary $b(t) \equiv 0$. In this case,
$$BLPP(0;(t,m)) = \max_{0 \leq t_0 \leq t_1 \leq \cdots \leq t_m = t} \sum_{k=1}^m B^{\mu}_k(t_k) - B^{\mu}_k(t_{k-1}).$$
Fitzgerald and Warren \cite{FW} (see also \cite{BFPSW}) have studied this case, drawing a connection to random matrices
and a point-to-line last passage problem.

Let $\mu \in \R^n$ and consider the $n \times n$ matrix $$ M(t) = H(t) + t \cdot  \mathrm{diag}(\mu).$$ Let $\lambda_{max}(t)$ be the largest eigenvalue of $M(t)$.
We are interested in the process
$$ Z_n(t) = \sup_{0 \leq s \leq t} \lambda_{\max}(s) = \sup_{0 \leq s \leq t}\, \lambda_{max}\left (H(s) + s \cdot \mathrm{diag}(\mu) \right).$$
Fitzgerald and Warren \cite[Proposition 4]{FW} prove that for every fixed $t$,
\begin{equation} \label{eqn:eigenidentity} Z_n(t) \stackrel{law}{=} BLPP(0; (t,n)).\end{equation}
In Proposition \ref{prop:flat} we give a Fredholm determinant formula for the law of $Z_n(t)$.

Now suppose all drifts in $\mu$ are negative: $\mu_i = - \beta_i$ with $\beta_i > 0$ for every $i$. Fitzgerald and Warren \cite[Theorem 1]{FW} prove that
\begin{equation} \label{eqn:fw}
\lim_{t \to \infty} Z(t) = \sup_{t \geq 0} \; \lambda_{max}\left( H(t) - t \cdot \mathrm{diag}(\beta) \right) \stackrel{law}{=} \Pi_{flat}
\end{equation}
where $\Pi_{flat}$ is the following random variable obtained via a last passage percolation problem.

Consider the set $\Delta = \{(i,j): i,j \geq 1\; \text{and}\; i+j \leq n+1\}$ and it boundary $\partial \Delta = \{(i,j) \in \Delta: i+j=n+1\}$.
An up/right path in $\Delta$ is a lattice path from $(1,1)$ to $\partial \Delta$ such that each step of the path goes in the direction $(1,0)$ of $(0,1)$.
For example, $(1,1) \to (2,1) \to (2,2) \to (3,2)$ is an up/right path. Put on the point $(i,j) \in \Delta$ a random variable
$\omega_{i,j} \sim \mathrm{Exponential}(\beta_i + \beta_{n+1-j})$ such that they are all independent. Then,
\begin{equation}
    \Pi_{flat} = \max_{\text{all up/right paths} \;\pi}\; \sum_{(i,j) \in \pi} \omega_{i,j}.
\end{equation}
Note that when $n=1$, $\Pi_{flat} \stackrel{law}{=} \mathrm{Exponential}(2 \beta_1)$, which recovers the well-known identity
$$ \sup_{t \geq 0} \; B(t) - \beta t \stackrel{law}{=} \mathrm{Exponential}(2 \beta)$$
where $B$ is a standard Brownian motion and $\beta > 0$.

We have the following determinant formula for the c.d.f.~of $\Pi_{flat}$.
\begin{thm} \label{thm:Piflat}
    For $a \in \R$,
    $$ \mathbf{Pr}(\Pi_{flat} \leq a) = \det(I - \chi K \chi)_{L^2(\R)}$$
    where $\chi(x) = \ind{x \geq \max\{a,0\}}$ and
    $$K(x,y) = - \frac{1}{2 \pi \mathbf{i}} \oint_{\gamma}dw\, e^{-(x+y)w} \prod_{i=1}^n \frac{\beta_i+w}{\beta_i-w}$$
    where $\gamma$ is a counter clockwise oriented closed contour containing all the poles at $w = \beta_i$.
\end{thm}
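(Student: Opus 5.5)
By \eqref{eqn:fw}, $\Pi_{flat}$ has the law of $\lim_{t\to\infty} Z_n(t)$. By \eqref{eqn:eigenidentity}, $Z_n(t)$ has the law of $BLPP(0;(t,n))$ with drifts $\mu_i=-\beta_i$. The plan is therefore:
\begin{itemize}
\item[(i)] obtain a Fredholm determinant formula for $\mathbf{Pr}(BLPP(0;(t,n))\le a)$ at fixed $t$ (this is the content of the forthcoming Proposition \ref{prop:flat});
\item[(ii)] let $t\to\infty$ in that formula.
\end{itemize}

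For step (i) I would use the general framework for BLPP with drifts and a boundary function $b$, as in \cite{Rah}. The event $BLPP(b;(t,n))\le a$ means that the Gelfand--Tsetlin pattern (equivalently, the top curve of the associated noncolliding system) started from the boundary data stays below level $a$. The resulting formula should be a determinant $\det(I-\chi K_t\chi)$. Here $K_t$ is built from a hitting-type operator: Brownian motion started at $x$ that hits the boundary curve $a-b(\cdot)$ before time $t$. It is composed with contour-integral kernels carrying the factors $\prod_i (w+\mu_i)^{\pm1}e^{tw^2/2}$.

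For the flat boundary $b\equiv 0$ the hitting problem is trivial. Brownian motion started above $a$ hits the constant level $a$, and the reflection principle turns the hitting operator into a reflection $x\mapsto 2a-x$. Composing this with the double contour kernel should collapse the $1/(z-w)$ factor. The $(w,z)$ pair is tied by $z=-w$, which produces a single contour integral of the form $e^{-(x+y)w}\prod_i \frac{w-\mu_i}{-w-\mu_i}\,e^{t(\cdots)}$. The cutoff $\max\{a,0\}$ enters because $b(0)=0$ forces $BLPP\ge 0$ automatically. For $a<0$ the probability is $0$, and the formula should reflect this: at $x=y=0$ the kernel gives an operator with eigenvalue $1$ on $[0,\infty)$.

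For step (ii), with $\mu_i=-\beta_i$ the product becomes $\prod_i\frac{\beta_i+w}{\beta_i-w}$, with poles at $w=\beta_i$ in the right half-plane. I expect the $t$-dependent part to consist of the Gaussian factors. After shifting the contour onto a small circle $\gamma$ around the poles $\beta_i$, the remaining contribution decays as $t\to\infty$. This leaves exactly the kernel $K$ of Theorem \ref{thm:Piflat}, with residues at $w=\beta_i$ giving finite sums of terms $e^{-\beta_i(x+y)}$ times polynomials.

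Two justifications remain, both mechanical:
\begin{itemize}
\item[(a)] $\chi K_t\chi\to\chi K\chi$ in trace norm on $L^2[\max(a,0),\infty)$. This follows from exponential decay of $e^{-(x+y)\Re w}$ with $\Re w>0$ on the contour, together with dominated convergence.
\item[(b)] Convergence in law of $Z_n(t)$ to $\Pi_{flat}$, which is monotone in $t$, so the c.d.f.s converge at every $a$ by continuity of the limit.
\end{itemize}

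The main obstacle is step (i): deriving the fixed-$t$ formula for the flat boundary with general drifts. This means setting up the determinantal structure of BLPP with a boundary and correctly evaluating the reflection/hitting operator so that the double integral collapses to a single contour. I would first verify the result in the case $n=1$, where the kernel is $2\beta_1 e^{-\beta_1(x+y)}$, a rank one operator. Then $\det = 1-e^{-2\beta_1\max(a,0)}$, matching $\mathrm{Exponential}(2\beta_1)$, and this fixes signs and normalizations before doing the general computation.
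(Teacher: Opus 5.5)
Your reduction matches the paper's: via \eqref{eqn:fw} and \eqref{eqn:eigenidentity}, apply Proposition \ref{prop:flat} at a single time $t$ with $\mu_i=-\beta_i$, then let $t\to\infty$. Your monotonicity argument for the c.d.f.s and your $n=1$ check are fine. The gap is in step (ii), which rests on a wrong picture of the fixed-$t$ kernel.

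Nothing collapses at fixed $t$. For $t_1=t_2=t$ the kernel is $K_{flat}$ from \eqref{eqn:Kflat}: $\ind{y>0}$ times the sum of two genuine double contour integrals, one with $\frac{1}{z-w}$ and one with $\frac{1}{z+w}$. Here $w$ runs around the negative points $\mu_i=-\beta_i$ and $z$ runs on a vertical line. Also, for $b\equiv 0$ the reflection is about the boundary, $x\mapsto -x$, not about $a$. The level $a$ enters only through $\chi$; a reflection about $a$ would put $a$-dependent exponentials into the kernel, contrary to the statement.

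The mechanism you describe could not work as stated. Suppose $K_t$ were a single contour integral around the poles carrying a factor $e^{t g(w)}$ with $g\not\equiv 0$. Shrinking the contour onto the poles would then leave residues containing $e^{t g(\beta_i)}$ (times polynomials in $t$), not a $t$-independent kernel.

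What actually produces $K$ is the following.
\begin{enumerate}
\item In the $\frac{1}{z+w}$ term the $z$-line initially lies to the right of $-\gamma$. Moving it to $\Re z=0$ crosses the simple pole at $z=-w$.
\item At $z=-w$ one has $e^{\frac{t}{2} z^2}/e^{\frac{t}{2} w^2}\equiv 1$. So the residue is exactly $\frac{1}{2\pi\mathbf{i}}\oint_\gamma dw\, e^{(x+y)w}\prod_i\frac{\mu_i+w}{\mu_i-w}$, which is $K(x,y)$ after $w\mapsto -w$.
\item It remains to show that two pieces tend to zero, with domination uniform in $t$: the entire $\frac{1}{z-w}$ term, which your plan does not account for, and the leftover $\frac{1}{z+w}$ double integral on $\Re z=0$. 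Your justification (a) only controls the limit kernel, not this.
\end{enumerate}

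The paper handles the last point with a specific contour choice. Let $\mu_{max}=\max_i\mu_i<0$, and take $w$ on a rectangle with $\Re w\le \mu_{max}/2$ and $|\Im w|\le |\Re w|$. Then $|e^{-\frac{t}{2} w^2}|\le 1$ and $|e^{xw}|\le e^{x\mu_{max}/2}$ for $x\ge 0$. For $z=\mathbf{i}v$ one has $|e^{\frac{t}{2} z^2-yz}|=e^{-tv^2/2}$. Together these give bounds $e^{x\mu_{max}/2}\epsilon_t$ with $\epsilon_t\to 0$, uniformly in $y$. This kills the remainders and also supplies the $f(x)g(y)$ bound needed for convergence of the Fredholm series.

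This choice is not a formality. On a contour where $\Re(w^2)<0$ somewhere, $e^{-\frac{t}{2} w^2}$ grows exponentially in $t$. That happens, for example, for a circle enclosing widely spread points $-\beta_i$.
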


O'Connell \cite[Section 10.2]{OC2} also gives a formula for $\sup_{t \geq 0} \; \lambda_{max}\left( H(t) - t \cdot \mathrm{diag}(\beta) \right)$ as a ratio of determinants.
The formula reads, for $a \geq 0$,
\begin{equation} \label{eqn:detratio}
\mathbf{Pr}\left( \sup_{t \geq 0} \; \lambda_{max}\left( H(t) - t \cdot \mathrm{diag}(\beta) \right) \leq a \right) = \frac{\det(\beta_i^{j-1} - e^{-2\beta_i a}(-\beta_i)^{j-1})_{i,j}}
{\det(\beta_i^{j-1})_{i,j}}.
\end{equation}
Actually, the above formula comes from taking a limit of the formula for $f(\nu,y)$ in \cite{OC2} but we omit the standard details. Theorem \ref{thm:Piflat} then implies a Fredholm determinant formula for this ratio of determinants.

\subsubsection{Noncolliding Brownian bridges} \label{sec:bridges}

The process $Z_n(t)$ is closely related to noncolliding Brownian bridges.
Let $$(B^{br}_1(t), \ldots, B^{br}_n(t))$$ with $B^{br}_1 > B^{br}_2 > \cdots > B^{br}_n$ be $n$ Brownian bridges
starting from zero at time $0$ and ending at zero at time $1$ and conditioned not to intersect on $(0,1)$. There are a few ways to do this conditioning.
One way is to realise it as the Doob $h$-transform of $n$ independent bridges with a suitable harmonic function $h$ \cite{Gra}. Another way is to consider
condition the independent bridges to not intersect on the time interval $(\epsilon, 1-\epsilon)$ and then take the limit $\epsilon \to 0$ \cite{CH}.
The third way, which if most relevant to our discussion, is to consider $H^{br}(t) = (1-t) H(\frac{t}{1-t})$ for $t \in [0,1]$, which is a bridge in the space of Hermitian matrices.
The ordered eigenvalues of $H^{br}(t)$ have the same law as $(B^{br}_1(t), \ldots, B^{br}_n(t))$, an observation that essentially dates back to Dyson \cite{Dys} (see also \cite{Gra}).

We shall see that
\begin{equation} \label{eqn:bridgeeigenvalue}
\left( \max_{t\in[0,1]} B^{br}_1(t) \right)^2 \stackrel{law}{=} \sup_{t \geq 0} \lambda_{max}(H(t)-tI).
\end{equation}
Nguyen and Remenik \cite[Theorem 1.2]{NR} (see also \cite{RS,SMCR}) have shown that
\begin{equation} \label{eqn:NR}
\left( \max_{t\in[0,1]} B^{br}_1(t) \right)^2 \stackrel{law}{=} \frac{1}{4} \lambda_{max}(X^tX)
\end{equation}
where $X$ is an $(n+1) \times n$ matrix with i.i.d. standard Normal entries (real valued).
The matrix $X^tX$ is known as a certain case of the Laguerre Orthogonal Ensemble\footnote{In the general case, $X$ is an $m \times n$ matrix with $m \geq n$ and the joint eigenvalue density $f$ acquires a factor of $\prod_i \lambda_i^a$ with $a = (m-n-1)/2$.},
and the joint law of its eigenvalues is given by
$$ f(\lambda_1,\ldots, \lambda_n) = \frac{1}{Z_n} \prod_{1 \leq i<j\leq n} |\lambda_j-\lambda_i| \cdot e^{-\frac{1}{2} \sum_{i=1}^n \lambda_i},\quad \lambda_i \geq 0.$$
Thus, we have the following corollary of \eqref{eqn:fw} and Theorem \ref{thm:Piflat}.

\begin{cor} \label{cor:loe}
    Let $X$ be a $(n+1) \times n$ random matrix whose entries are i.i.d. standard Normal random variables. The largest eigenvalue $\lambda_{max}(X^tX)$
    of the Laguerre Orthogonal Ensemble has c.d.f.~given by
    $$ \mathbf{Pr}(\lambda_{max}(X^tX) \leq 4a) = \det(I -\chi K \chi)_{L^2(\R)}$$
    where $\chi(x) = \ind{x \geq \max \{a,0\}}$ and
    $$ K(x,y) = - \frac{1}{2 \pi \mathbf{i}} \oint_{\gamma}dw\, e^{-(x+y)w} \left (\frac{1+w}{1-w} \right)^n$$
    with $\gamma$ being a counter clockwise oriented closed contour containing 1.
\end{cor}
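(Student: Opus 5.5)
The plan is to chain the distributional identities already stated and then specialise Theorem \ref{thm:Piflat}. Take all $\beta_i = 1$, so that $\mathrm{diag}(\beta) = I$.

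First, by \eqref{eqn:NR} and \eqref{eqn:bridgeeigenvalue},
$$\tfrac14 \lambda_{max}(X^tX) \stackrel{law}{=} \Big(\max_{t\in[0,1]} B^{br}_1(t)\Big)^2 \stackrel{law}{=} \sup_{t\geq 0}\lambda_{max}(H(t)-tI).$$
By \eqref{eqn:fw} with $\beta \equiv 1$, the right-hand side has the law of $\Pi_{flat}$, with weights $\omega_{i,j}\sim\mathrm{Exponential}(2)$. Therefore
$$\mathbf{Pr}(\lambda_{max}(X^tX)\leq 4a) = \mathbf{Pr}(\Pi_{flat}\leq a).$$
Theorem \ref{thm:Piflat} then gives the stated Fredholm determinant, because $\prod_{i=1}^n \frac{\beta_i+w}{\beta_i-w}$ reduces to $\left(\frac{1+w}{1-w}\right)^n$. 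The contour $\gamma$ only needs to enclose the single pole $w=1$.

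The one ingredient not yet justified is \eqref{eqn:bridgeeigenvalue}, which the paper states with ``we shall see''. I would prove it via the matrix bridge $H^{br}(t)=(1-t)H(\frac{t}{1-t})$. Substitute $s=t/(1-t)\in[0,\infty)$, so that $1-t = 1/(1+s)$. Then
$$\max_{t\in[0,1]}\lambda_{max}(H^{br}(t)) = \sup_{s\geq0}\frac{\lambda_{max}(H(s))}{1+s}.$$
The left side is nonnegative, since its value at $t=0$ is $0$. We need to show that its square has the law of $\sup_s \lambda_{max}(H(s)-sI)$.

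For this I would use the scaling and time-inversion invariance of Hermitian Brownian motion. For $c\ge0$, the event $\{\sup_s \lambda_{max}(H(s))/(1+s)\le c\}$ equals the event $\{\lambda_{max}(H(s)) \le c + cs \text{ for all } s\}$. Apply Brownian scaling $H(s)\stackrel{law}{=}c\,H(s/c^2)$ and substitute $u=s/c^2$. The condition becomes $\lambda_{max}(H(u))\le 1 + c^2 u$ for all $u$.

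Next apply time inversion $\tilde H(v)=vH(1/v)$, which is again a Hermitian Brownian motion. Setting $v=1/u$, the condition becomes $\lambda_{max}(\tilde H(v))\le v + c^2$ for all $v$. That is,
$$\sup_v \lambda_{max}(\tilde H(v)-vI)\le c^2.$$
Since every step preserves the law as a process, the two suprema have equal distribution functions. This gives \eqref{eqn:bridgeeigenvalue}.

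The main obstacle is the bookkeeping with the endpoints $s=0$ and $s=\infty$, together with checking that time inversion holds at the level of the whole matrix process. Time inversion holds entrywise for the independent real Brownian motions, so it holds for the matrix process. The remaining steps are direct substitutions into the earlier results.
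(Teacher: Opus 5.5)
Your proposal is correct and follows the paper's route: chain \eqref{eqn:NR}, \eqref{eqn:bridgeeigenvalue} and \eqref{eqn:fw} with $\beta_i\equiv 1$, then specialise Theorem \ref{thm:Piflat}; your verification of \eqref{eqn:bridgeeigenvalue} is also valid, although the paper obtains it from Brownian scaling alone (the proof of Proposition \ref{prop:bridgelaw}, read at $s=1$ in \eqref{eqn:bridgeidentity}). Your time-inversion step is unnecessary: if you scale the other way, $H(s)\stackrel{law}{=}c^{-1}H(c^2 s)$, the condition $\lambda_{max}(H(s))\le c(1+s)$ becomes $\lambda_{max}(H(t))\le c^2+t$ directly, and your version in effect just reads the bridge backwards in time, which is harmless because the bridge is time-reversal symmetric.
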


Now consider $n$ independent Brownian bridges $(B^{br}_1, \ldots, B^{br}_n)$ such that $B^{br}_i(t)$ starts at $\nu_i$ at $t=0$ and ends at zero at $t=1$.
Condition these bridges to not collide on $(0,1)$ and denote by $B^{br}_{top}(t)$ the topmost bridge after conditioning. For $r > \max \{\nu_1, \ldots, \nu_n, 0\}$, let
\begin{equation} \label{eqn:Fbridge} F(r) = \mathbf{Pr}\left (\max_{t \in [0,1]} \,B^{br}_{top}(t) \leq r\right ).\end{equation}
It is explained in \cite[section 10.2]{OC2} how $F(r)$ is related to the right-hand-side of \eqref{eqn:detratio}; it equals that ratio for $\beta_i = r-\nu_i$ and $a=r$.
So as a corollary to Theorem \ref{thm:Piflat} we have
\begin{cor} \label{cor:bridgemax}
    The c.d.f.~$F(r)$ from \eqref{eqn:Fbridge} equals
    $$ F(r) = \det(I - K)_{L^2[0,\infty)}$$
    where
    $$K(x,y) = - \frac{1}{2\pi \mathbf{i}} \oint_{\gamma} dw\, e^{-(x+y+2r^2)w} \prod_{i=1}^n \frac{1+w - (\nu_i/r)}{1-w-(\nu_i/r)}$$
    and $\gamma$ is a closed, counter clockwise oriented contour enclosing all the poles at $1-(\nu_i/r)$.
\end{cor}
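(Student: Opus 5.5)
Corollary \ref{cor:bridgemax} follows from Theorem \ref{thm:Piflat} by a change of variables, using the identification stated just before the corollary: by \cite[Section 10.2]{OC2}, $F(r)$ equals the right-hand side of \eqref{eqn:detratio} with $\beta_i = r-\nu_i$ and $a=r$. Since $r > \max\{\nu_i,0\}$, every $\beta_i>0$ and $a=r>0$. So the drifted Hermitian model of \eqref{eqn:fw} applies, and \eqref{eqn:detratio} equals $\mathbf{Pr}(\Pi_{flat}\le a)$ with these $\beta_i$. Theorem \ref{thm:Piflat} then gives
$$F(r) = \det(I-\chi K_\beta \chi)_{L^2(\R)}, \qquad K_\beta(x,y) = -\frac{1}{2\pi\mathbf{i}}\oint_\gamma dw\, e^{-(x+y)w}\prod_{i=1}^n\frac{r-\nu_i+w}{r-\nu_i-w},$$
with $\chi = \ind{x\ge r}$ because $\max\{a,0\}=r$.

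The first step is to turn $\det(I-\chi K_\beta\chi)_{L^2(\R)}$ into a Fredholm determinant on $L^2[r,\infty)$. Then I shift $x = x'+r$, $y=y'+r$ to move to $L^2[0,\infty)$, which produces the factor $e^{-2rw}$ in the kernel. Next I rescale the integration variable by $w = r w'$. The product becomes $\prod_i \frac{1+w'-\nu_i/r}{1-w'-\nu_i/r}$, the exponential becomes $e^{-(x'+y'+2r)rw'}$, and $dw = r\,dw'$. The new contour encloses the points $w'=1-\nu_i/r$ and keeps its counterclockwise orientation because $r>0$.

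Finally I rescale space by $x' = \tilde x/r$, $y'=\tilde y/r$. This is a unitary dilation of $L^2[0,\infty)$, which conjugates the operator and leaves the Fredholm determinant unchanged. Under it the kernel picks up a Jacobian factor $1/r$, which cancels the $r$ from $dw$. The exponent becomes $-(\tilde x+\tilde y + 2r^2)w'$, which is exactly the kernel $K$ in the corollary.

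The one step that needs care is the bookkeeping: the scaling factors must cancel exactly (the Jacobian $r$ from $dw$ against the $1/r$ from the dilation), and the invariance of the Fredholm determinant under the conjugation $K\mapsto D K D^{-1}$, with $(Df)(x)=r^{-1/2}f(x/r)$, must be justified. That justification is immediate from the series expansion of the determinant, or from trace-class invariance. No analytic convergence issues arise beyond those already handled in Theorem \ref{thm:Piflat}, because the kernel is a finite sum of terms of the form polynomial times $e^{-(x+y)\beta_i}$, so it is trace class on the half-line.
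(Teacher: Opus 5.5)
Your proposal is correct and is essentially the paper's argument. You identify $F(r)$ with $\mathbf{Pr}(\Pi_{flat}\le r)$ for $\beta_i=r-\nu_i$ via \cite[Section 10.2]{OC2}, \eqref{eqn:detratio} and \eqref{eqn:fw}, and then apply Theorem \ref{thm:Piflat}. The paper leaves the change of variables implicit; your bookkeeping (shift by $r$, substitution $w=rw'$, then the dilation $x'=\tilde x/r$ with the factors of $r$ cancelling) correctly produces the kernel with exponential factor $e^{-(x+y+2r^2)w}$.
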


Liechty, Nguyen and Remenik \cite{LNR} consider a certain KPZ scaling limit of $\max_{t} B^{br}_{top}(t)$ above.
Under this scaling, $B^{br}_{top}$ converges to a process called the Airy process with wanderers \cite{AFM} -- it is a deformation of the Airy process introduced before.
They find the limiting c.d.f.~$F_{limit}$ of $F$ under the aforementioned KPZ scaling, see \cite[Theorem 1]{LNR}, and describe it in many ways.
It is the c.d.f.~of the all time maximum of the (parabolic) Airy process with wanderers. The maximum of the Airy process is the GOE Tracy-Widom distribution \cite{Jo5}.
It is argued in \cite{LNR} that $F_{limit}$ is a deformation of the GOE Tracy-Widom distribution. One can thus think of Corollary \ref{cor:bridgemax} as a finite $n$ analogue
of this result.

The article \cite{LNR} also finds intriguing connections between the formula for $F_{limit}$ and solutions to differential equations such as the Painlev\'e II equation,
the KdV equation and a p.d.e.~introduced by Bloemendal and Vir\'ag \cite{BV1, BV2} to describe the largest eigenvalue of ``spiked" random matrices.
It may be interesting to understand the finite $n$ analogue of these connections.

\section*{Acknowledgements}
I thank Neil O'Connell for bring my attention to \cite{OC2} and Tom Claeys for telling me about \cite{CNV} as well as related references in the literature.

\section{Preliminaries} \label{sec:prem}
Let $K$ be an integral kernel acting on the space $L^2(\Omega,\mu)$. The Fredholm determinant of $K$ is
\begin{equation} \label{eqn:Fredholm}
\dt{I-K} = 1 + \sum_{k=1}^{\infty} \frac{(-1)^k}{k!} \int_{\Omega^K} d\mu(z_1) \cdots d\mu(z_k)\, \dt{K(z_i,z_j)}_{1\leq i,j\leq k.}\end{equation}
If there are functions $f$ and $g$ on $X$ such that
$$ |K(x,y)| \leq f(x) g(y)$$
with $f$ bounded and $g$ integrable (or vice versa), then the series converges absolutely.
Furthermore, suppose a sequence of integral kernels
$K_n$ satisfy $K_n \to K$ pointwise on $\Omega$ and $|K_n(x,y)| \leq f(x) g(y)$ for every $n$ with $f$ bounded and $g$ integrable (or vice versa). Then,
$$ \dt{I+K_n}_{L^2(\Omega,\mu)} \to \dt{I+K}_{L^2(\Omega,\mu)}.$$
See \cite{Josurvey} for proofs of these facts, which are deduced from Hadamard's inequality and the dominated convergence theorem.

We will be interested in Fredholm determinants over the space $\Omega = \{1,2,\ldots,m\} \times \R$ with the measure $\mu$ being the product of counting measure on $\{1,2,\ldots,m\}$ and Lebesgue measure on $\R$.

A conjugation of an integral kernel $K$ is a kernel $K'$ of the form
$$ K'(x,y) = \frac{c(x)}{c(y)} K(x,y)$$
where $c: \Omega \to \R\setminus{0}$ is a non-vanishing function.
Fredholm determinants remain invariant under conjugation: $\dt{I-K'} = \dt{I-K}$.
Although, bounds of the form $|K(x,y)| \leq f(x)g(y)$ are \emph{not} invariant under conjugation.
In order to demonstrate that the series expansion of a Fredholm determinant is absolutely convergent, it is often necessary to conjugate the kernel $K$ so that the conjugated kernel $K'$ does satisfy a bound of the form $K'(x,y)| \leq f(x)g(y)$ with $f$ bounded and $g$ integrable (or vice versa).
It is customary, for sake of simplicity, to not include such conjugation factors when presenting a kernel in a theorem, although in proofs we do conjugate kernels so that the Fredholm series expansion converges absolutely.

\section{Brownian last passage percolation with drifts and boundary} \label{sec:blpp}

In this section we provide a Fredholm determinant formula for the model \eqref{eqn:BLPP} of Brownian last passage percolation with drifts and a boundary.

For $t > 0$, recall the heat kernel
$$e^{t \partial^2/2}(x,y) = \frac{1}{\sqrt{2\pi t}} e^{-(x-y)^2/2t}.$$

For $m \geq 1$ and $t > 0$, we define the following family of integral kernels.
\begin{equation} \label{eqn:Smt}
    S_{m,-t}(x,y) = \frac{1}{2\pi \mathbf{i}} \oint_{\gamma} dz\, e^{-\frac{t}{2}z^2 + (x-y)z} \prod_{i=1}^m (z-\mu_i)^{-1}.
\end{equation}
Here $\gamma$ is a closed contour that contains all the poles at $z = \mu_1, \mu_2, \ldots, \mu_m$ and is oriented counter clockwise.
\begin{equation} \label{eqn:Sbarmt}
    \bar{S}_{m,t}(x,y) = \frac{1}{2\pi \mathbf{i}} \oint_{\Gamma} dz\, e^{\frac{t}{2}z^2 + (x-y)z} \prod_{i=1}^m (z-\mu_i).
\end{equation}
Here $\Gamma$ is a vertical contour $\{\Re(z) = d\}$ for any $d \in \mathbb{R}$ and is oriented upwards.

Let $W$ be a standard Brownian motion started from $x \in \mathbb{R}$ and define, for the boundary condition $b$, the stopping time
$$ \tau = \inf \{ s \geq 0: W(s) \leq b(s)\}.$$
Define the integral kernel
\begin{equation} \label{eqn:Shypo}
    S^{\mathrm{hypo}(b)}_{m,t}(x,y) = \mathbf{E} \left [ \bar{S}_{m,t-\tau}(W_{\tau},y) \ind{\tau \leq t} \mid W(0) = x \right ].
\end{equation}

Let $D_{m,y}$ be the operator
\begin{equation}
    D_{m,y} = \prod_{i=1}^m (-\partial_y - \mu_i).
\end{equation}
Observe that
$\bar{S}_{m,t}(x,y) = D_{m,y} \cdot e^{t \partial^2/2}(x,y).$
As such, we can write
$$ S_{m,t}^{\mathrm{hypo}(b)}(x,y) = D_{m,y} \cdot S_t^{\mathrm{hit}(b)}(x,y)$$
where
$$S_t^{\mathrm{hit}(b)}(x,y) = \mathbf{E} \left[ e^{(t-\tau) \partial^2/2}(W(\tau),y) \ind{\tau \leq t} \mid W(0)=x\right] = \mathbf{Pr}(\tau \leq t, W(t) \in dy \mid W(0)=x).$$

Finally, define the extended integral kernel $K(t_1,x; t_2,y)$ according to
\begin{equation}\label{eqn:KBrownian}
K(t_1, x; t_2, y) = - e^{(t_2-t_1)\partial^2/2}(x,y) \mathbf{1}_{t_1 < t_2} + S_{m, -t_1} \cdot S^{\mathrm{hypo}(b)}_{m,t_2}(x,y).
\end{equation}
The kernel acts on the space $L^2(\{t_1,\ldots, t_k\} \times \mathbb{R})$.

For $(t_1,a_1), \ldots, (t_k,a_k) \in \mathbb{R}^2$, define the operator $\chi_a$ according to $\chi_a(t_i,x) = \mathbf{1}_{x \geq a_i}$.
It also acts on $L^2(\{t_1,\ldots, t_k\} \times \mathbb{R})$.

\begin{thm} \label{thm:BLPP}
For $m \geq 1$, $0 < t_1 < t_2 < \cdots < t_k$ and $a_1, \ldots, a_k \in \mathbb{R}$,
$$\mathbf{Pr}(BLPP(b; (t_i,m)) \leq a_i; 1 \leq i \leq k) = \det(I - \chi_a K \chi_a)_{L^2(\{t_1,\ldots, t_k\} \times \mathbb{R})}.$$
\end{thm}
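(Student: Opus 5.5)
The plan follows the strategy of Matetski--Quastel--Remenik for TASEP, adapted to Brownian last passage percolation as in \cite{Rah}. First I reduce to a boundary supported on finitely many points. By the variational definition \eqref{eqn:BLPP},
$$BLPP(b;(t,m)) = \max_{0\le s\le t}\big(b(s) + BLPP((s,0);(t,m))\big),$$
where $BLPP((s,0);(t,m))$ denotes the narrow-wedge passage value started at time $s$. Approximating $b$ by a boundary that equals $b$ on a finite grid $0=s_0<s_1<\dots<s_N$ and $-\infty$ elsewhere gives monotone convergence of the passage times as the mesh goes to zero. The first-hitting-time kernel $S^{\mathrm{hypo}(b)}$ should converge correspondingly, since hitting of the discretized boundary converges to hitting of the continuous $b$ by path continuity. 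So it suffices to prove the formula for general boundaries by a route that does not need discretization at all, or to prove it for the discrete case and pass to the limit.

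I would instead use a Gelfand--Tsetlin / determinantal structure directly. The step-initial (narrow wedge) Brownian LPP with drifts, as a process in $t$, is the top curve of a Warren-type process of interlacing Brownian motions. In particular, $(BLPP(b;(t,k)))_{k\le m}$ is a Markov process in $t$ whose transition densities are given by Schütz/Warren type determinants with entries built from $D_{k}$ applied to heat kernels, that is, $\bar S$ and $S$. Following \cite{Rah} (or Borodin--Ferrari--Prähofer--Sasamoto), the joint law of the top particle at times $t_1<\dots<t_k$ for a general initial configuration is a Fredholm determinant with kernel
$$-e^{(t_2-t_1)\partial^2/2}\mathbf 1_{t_1<t_2}+\sum_{j} \Psi_j(t_1,x)\Phi_j(t_2,y).$$
Here the $\Psi_j$ are explicit contour integrals (giving the factor $S_{m,-t_1}$), and the $\Phi_j$ are defined by a biorthogonality relation and depend on the initial data. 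The boundary $b$ enters as an initial condition for the line ensemble: by the variational formula, it is equivalent to starting the $m$ lines from the profile determined by $b$ through Brownian motions.

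The key step, and the main obstacle, is solving the biorthogonalization, that is, identifying $\sum_j\Phi_j\Psi_j$ with $S_{m,-t_1}S^{\mathrm{hypo}(b)}_{m,t_2}$. I would mimic the MQR argument. The functions $\Phi_j(t,\cdot)$ are characterized as solutions of a backward heat-type equation with terminal data given by polynomials (from $D_{m,y}$ and the drifts). The constraint coming from the boundary means that $\Phi$ solves the heat equation in the region above the curve $b$ and matches $\bar S$ on the boundary. By the strong Markov property of Brownian motion, such a solution is exactly $\mathbf E[\bar S_{m,t-\tau}(W_\tau,y)\mathbf 1_{\tau\le t}]$. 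Concretely, I would verify that the candidate kernel satisfies the heat equation in $x$, has the right values on $\{x\le b\}$ (where $\tau=0$ and $S^{\mathrm{hypo}}=\bar S$), and has controlled growth, and then invoke uniqueness for the heat equation. Finally, I would check trace-class bounds after conjugation by $e^{cx}$ as in Section \ref{sec:prem}, so that the Fredholm series converges and the discretization limit $b_N\to b$ passes inside the determinant by dominated convergence.
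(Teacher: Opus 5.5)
There is a genuine gap at the step where you posit the determinantal starting point. The Borodin--Ferrari--Pr\"ahofer--Sasamoto and Matetski--Quastel--Remenik biorthogonalization gives the joint law of several \emph{particles at one fixed time}, and the extended variable of the resulting kernel is the particle label. That is why the kernel in the theorem contains $e^{(t_2-t_1)\partial^2/2}$: it is the transition in the label direction. So in this theorem the Brownian time $t$ is the particle label and the column index $m$ is the time.

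In the structure you describe, time is $t$ and the state is $(BLPP(b;(t,k)))_{k\le m}$, which is a system of one-sided reflected Brownian motions. Biorthogonalization there yields the law of $(BLPP(b;(t,k_i)))_i$ at a single $t$. It does not give the law of $(BLPP(b;(t_i,m)))_i$ at fixed $m$ and several $t_i$, which is what the theorem asserts.

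Worse, in the $t$-direction $b$ is not an initial condition at all. At $t=0$ the profile is $BLPP(b;(0,k))=b(0)=0$ for every $k$, whatever $b$ is. Instead, $b$ acts as a deterministic zeroth line forcing level one, $Y_1(t)=\max_{s\le t}\bigl(b(s)+B^{\mu}_1(t)-B^{\mu}_1(s)\bigr)$. Hence there is no Sch\"utz/Warren transition determinant ``with initial profile determined by $b$'' to biorthogonalize, and your sentence asserting such an equivalence is exactly the missing (and, as stated, false) link.

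In the correct orientation, the state is the whole path $t\mapsto BLPP(b;(t,m))$ and $b$ is genuine initial data at $m=0$. But that state is infinite-dimensional, so there is no finite Sch\"utz determinant and no finite biorthogonal family $\{\Psi_j,\Phi_j\}$. The kernel $S_{m,-t_1}\cdot S^{\mathrm{hypo}(b)}_{m,t_2}$ is a composition over a continuous intermediate variable, not a finite sum. Consequently your heat-equation/uniqueness characterization of $\Phi_j$ has nothing rigorous to attach to; it would also need regularity of a space-time boundary given only by a continuous $b$.

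This is why the paper discretizes $t$. It takes inhomogeneous geometric LPP with $N$ rows and rewrites the Johansson--Rahman transition formula in Sch\"utz form (Proposition \ref{prop:schutz}). It then applies \cite[Theorem 1.2]{MR}, where the biorthogonalization is already solved by the random-walk hitting kernel $S^{\mathrm{epi}(\tilde x)}$. Finally it passes to the Brownian limit via Donsker/Glynn--Whitt: Lemmas \ref{lem:Qlimit}--\ref{lem:Sepilimit} give the kernel asymptotics, with the factors $(N/2)^{\pm m/2}$ cancelling in the composition, and Proposition \ref{prop:SSdecay} gives decay after conjugation by $e^{-\kappa_i|x|+\kappa_j|y|}$.

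Your argument needs that discrete input, or a genuine continuum analogue of the Matetski--Remenik theorem for the $m$-indexed chain on path space. Your finite-grid approximation of $b$ does not supply it, since even for finitely supported boundaries you still have no determinantal formula in hand.
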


\begin{rem}
    Note that the kernel $K$ in Theorem \ref{thm:BLPP} is left invariant under permutations of the drifts $\mu_i$.
    So the law of $BLPP(b; (t,m))$ is invariant under permutations of the drifts, which is known as the Burke property.
\end{rem}

In order to prove this theorem, we will consider a model of inhomogeneous Geometric last passage percolation and take an appropriate scaling limit to the Brownian model. Such limit transitions were first obtained by Glynn and Whitt \cite{GW}.

\subsection{Inhomogeneous Geometric last passage percolation} \label{sec:geomlpp}
Let $a_i \in (0,1)$ be a sequence of numbers for $i \geq 1$.
Let $\omega_{i,j}$ be independent random variables with law
$$ \mathbf{Pr}(\omega_{i,j} = k) = (1-a_i) a_i^{k} \quad k = 0,1,2, \ldots$$
Let $G(m,n)$ be the the last passage time from $(0,0)$ to $(m,n)$, defined recursively by
$$G(m,n) = \max \{G(m-1,n), G(m,n-1)\} + \omega_{m,n}.$$
The initial condition is on column zero ($m = 0$) by setting $G(0,n) = x_n$ with integers $0 \leq x_1 \leq x_2 \leq x_3 \leq \cdots$,
and $G(m,0) = 0$ for $m \geq 0$.

Let $N$ be large and fixed integer. Define the random vector
$$ G(m) = (G(m,1), G(m,2), \ldots, G(m,N))$$
It takes values in the set
$$ \mathbb{W}_N = \{(x_1, \ldots, x_N) \in \mathbb{Z}^N: x_i \leq x_{i+1}\}$$
In \cite[Theorem 1]{JR} the following formula for the transition probability of $G(m)$ is proved (note that it is an inhomogeneous Markov process):
\begin{equation} \label{eqn:transition}
P(G(m) = y \mid G(0)=x) = \det [W_{j-i}(y_j-x_i)]_{1\leq i,j \leq N}
\end{equation}
with
$$ W_k(x) = \frac{1}{2 \pi \mathbf{i}} \oint_{|z| = r > 1} dz\, z^{x-1} (z-1)^k \prod_{i=1}^m \frac{1-a_i}{1- a_i/z}.$$

We define the following bijection. For $x = (x_1, \ldots, x_N) \in \mathbb{W}_N$ define
$$ \tilde{x}_j = -x_j - j. $$
We have that $\tilde{x} \in \tilde{\mathbb{W}}_N$ where
$$ \tilde{\mathbb{W}}_N = \{ x = (x_1, \ldots, x_N) \in \mathbb{Z}^N: x_1 > x_2 > \cdots > x_N\}.$$
The mapping $x \mapsto \tilde{x}$ is a bijection from $\mathbb{W}_n$ to $\tilde{\mathbb{W}}_N$.
Define
$$ \tilde{W}_k(x) = W_{-k}(k-x).$$
Then
\begin{equation}
    \tilde{W}_k(x) = \frac{(-1)^k}{2\pi \mathbf{i}} \oint_{|z|=r>1} \frac{dz}{z}\, \frac{z^{k-x}}{(1-w)^k}\, \phi_m(z)
\end{equation}
where
$$ \phi_m(z) = \prod_{i=1}^m \frac{1-a_i}{1- a_i/z}.$$

\begin{prop} \label{prop:schutz}
    For $x,y \in \mathbb{W}_N$, we have
    $$P(G(m)=y \mid G(0) = x) = \det [\tilde{W}_{i-j}(\tilde{y}_{N+1-i}-\tilde{x}_{N+1-j})]$$
\end{prop}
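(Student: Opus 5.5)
The plan is to show that the two determinants have matrices which are transposes of each other up to reversing the order of rows and columns. The idea is to express each entry $W_{j-i}(y_j-x_i)$ of the Johansson--Rahman formula \eqref{eqn:transition} through $\tilde W$ evaluated at the transformed coordinates $\tilde x,\tilde y$. No analytic input is needed beyond the definitions.

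\emph{Entrywise identity.} From $\tilde x_i=-x_i-i$ and $\tilde y_j=-y_j-j$ we get
$$\tilde y_j-\tilde x_i=-(y_j-x_i)-(j-i).$$
By the definition $\tilde W_k(x)=W_{-k}(k-x)$, taking $k=i-j$ gives
$$\tilde W_{i-j}(\tilde y_j-\tilde x_i)=W_{j-i}\bigl((i-j)+(y_j-x_i)+(j-i)\bigr)=W_{j-i}(y_j-x_i).$$
Hence
$$P(G(m)=y\mid G(0)=x)=\det\bigl[\tilde W_{i-j}(\tilde y_j-\tilde x_i)\bigr]_{1\le i,j\le N}.$$

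\emph{Reindexing.} Let $A_{i,j}=\tilde W_{i-j}(\tilde y_j-\tilde x_i)$, and let $B$ be the matrix in the statement, $B_{i,j}=\tilde W_{i-j}(\tilde y_{N+1-i}-\tilde x_{N+1-j})$. Put $i'=N+1-j$ and $j'=N+1-i$. Then $i-j=i'-j'$ and $\tilde y_{N+1-i}-\tilde x_{N+1-j}=\tilde y_{j'}-\tilde x_{i'}$, so
$$B_{i,j}=A_{N+1-j,\,N+1-i}.$$
Thus $B=PA^{t}P$, where $P$ is the order-reversing permutation matrix. Since $\det P^2=1$ and transposition preserves the determinant, $\det B=\det A$, which proves the proposition.

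\emph{Contour formula for $\tilde W_k$.} As a side check, substituting into the contour integral for $W$ gives
$$W_{-k}(k-x)=\frac{1}{2\pi\mathbf{i}}\oint_{|z|=r>1} dz\, z^{k-x-1}(z-1)^{-k}\phi_m(z).$$
Writing $(z-1)^{-k}=(-1)^k(1-z)^{-k}$ yields the displayed formula for $\tilde W_k$ (the ``$w$'' there should read $z$). When $k>0$, the pole at $z=1$ lies inside $|z|=r>1$, which is consistent with the original definition. The only real obstacle is bookkeeping: keeping the index reversal and the sign $(-1)^k$ straight. There is no convergence or analytic issue, since $\tilde W$ is simply $W$ evaluated at transformed arguments.
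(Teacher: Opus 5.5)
Your proof is correct and is essentially the paper's argument: both rest on the entrywise identity $\tilde W_{k}(u)=W_{-k}(k-u)$ combined with reversing the order of rows and columns and transposing. You substitute first, which recovers the matrix of \eqref{eqn:transition} exactly, and then observe $B=PA^{t}P$; the paper instead reverses and transposes first, using the auxiliary $w_k(x)=W_{-k}(-x)$, and substitutes $\tilde x,\tilde y$ at the end.
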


\begin{proof}
Define $w_k(x) = W_{-k}(-x)$. Using \eqref{eqn:transition} we find that
\begin{align*}
  P(G(m) = y \mid G(0)=x) &= \det (W_{j-i}(y_j-x_i)) \\
  &= \det [w_{i-j}(x_i-y_j] \\
  &= \det [w_{j-i}(x_{N+1-i}-y_{N+1-j})]\\
  &= \det [w_{i-j}(x_{N+1-j}-y_{N+1-i})]\\
  &= \det [w_{i-j}(-\tilde{x}_{N+1-j} - (N+1-j) +\tilde{y}_{N+1-i} + (N+1-i)]\\
  &= \det [w_{i-j}(\tilde{y}_{N+1-i} - \tilde{x}_{N+1-j} +j-i)] \\
  &= \det [\tilde{W}_{i-j}(\tilde{y}_{N+1-i} - \tilde{x}_{N+1-j})].
\end{align*}
\end{proof}

Define a new process $X(m) \in \tilde{W}_N$ according to
$$X(m,n) = -G(m,n)-n$$
By Proposition \ref{prop:schutz}, for $x,y \in \tilde{W}_N$,
\begin{equation} \label{eqn:Xtrans}
P(X(m) = y \mid X(0) = x) = \det [\tilde{W}_{i-j}(y_{N+1-i}-x_{N+1-j})]
\end{equation}
Also, we have that
\begin{equation} \label{eqn:Xfdd}
    \mathbf{Pr}(G(m,n_i) +n_i < a_i; \,1 \leq i \leq k \mid G(0)=x) = \mathbf{Pr}(X(m,n_i) > -a_i; \, 1 \leq i \leq k \mid X(0)=\tilde{x}).
\end{equation}

By \cite[Theorem 1.2]{MR} we have, due to \eqref{eqn:Xtrans}, that
\begin{equation} \label{eqn:Xdeterminant}
   \mathbf{Pr}(X(m,n_i) > -a_i; \, 1 \leq i \leq k \mid X(0)=\tilde{x}) = \det(I - \bar{\chi}_a K_G \bar{\chi}_a)_{\ell^2(\{n_1, \cdots, n_m\}\times \mathbb{Z})}
\end{equation}
where $\bar{\chi}_a(i,z) = \ind{z \leq -a_i}$ and the kernel $K_G$ is as follows.

\begin{equation} \label{eqn:Kgeom}
    K_G(n_1,x; n_2, y) = - Q^{n_2-n_1}\mathbf{1}_{n_1 < n_2}(x,y) + S_{m,-n_1} \cdot S^{\mathrm{epi}(\tilde{x})}_{m,n_2}(x,y)
\end{equation}
The kernels $Q$, $S$ and $S^{hypo(x)}$ are as follows for integers $m,n \geq 1$.

Choose $\theta \in (0,1)$ and set $\alpha = (1-\theta)/\theta$.
Let $\max_{1 \leq i \leq m} a_i < r <1$ and $\delta < 1$ be radii parameters.

\begin{equation} \label{eqn:Qgeom}
Q^n(z_1,z_2) = \frac{1}{2\pi \mathbf{i}} \oint_{|w|=r} dw\, \frac{\theta^{z_1-z_2}}{w^{z_2-z_1-n+1}} \left(\frac{\alpha}{1-w}\right)^n.
\end{equation}
This is the $n$-step transition probability of a random walk $B_n$ with $- Geom(1-\theta)$ steps [strictly to the left].
\begin{equation} \label{eqn:Sgeom}
S_{m,-n}(z_1,z_2) = \frac{1}{2 \pi \mathbf{i}} \oint_{|w|=r}dw\, \frac{\theta^{z_1-z_2}}{w^{z_1-z_2+n+1}} \left( \frac{1-w}{\alpha}\right)^n \phi_m(w).
\end{equation}
\begin{equation} \label{eqn:Sbargeom}
\bar{S}_{m,n}(z_1,z_2) = \frac{1}{2 \pi \mathbf{i}} \oint_{|w|=\delta}dw\, \frac{\theta^{z_1-z_2} (1-w)^{z_2-z_1+n-1}}{(w/\alpha)^n} \phi_m(1-w)^{-1}.
\end{equation}
Let
$$\tau = \min \, \{n=0,1,2, \ldots: B_n > \tilde{x}_{n+1}\}.$$
The kernel
\begin{equation} \label{eqn:Shypogeom}
S^{\mathrm{epi}(\tilde x)}_{m,n}(z_1,z_2) = \E{ \bar{S}_{m,n-\tau}(B_{\tau},z_2) \mathbf{1}_{\tau < n} \mid B_0 = z_1}.
\end{equation}

See \cite[Section 3]{Rah} for more details behind this derivation of the kernel from \cite{MR}.

\subsection{Limit transition from Geometric to Brownian}
We explain the limit transition from Geometric LPP to Brownian LPP in our setting.
Define constants
$$ c_1 = 1  \quad c_2 = 2 \quad c_3 = \frac{1}{2 \sqrt{2}}.$$
In the Geometric model, we choose
$$ a_i = \frac{1}{2} + \frac{c_3 \mu_i}{\sqrt{N}} \quad \text{for}\; N \;\text{a large integer}.$$
Let $c_{1,i} = \mathbf{E}[w_{i,j}] = \frac{a_i}{1-a_i}$ and $c_{2,i} = \mathrm{Var}(w_{i,j}) = \frac{a_i}{(1-a_i)^2}$.
A computation shows that
$$ c_{1,i} = c_1 + \frac{4c_3\mu_i}{\sqrt{N}} + O(1/N) \quad c_{2,i} = c_2 + \frac{12c_3\mu_i}{\sqrt{N}} + O(1/N).$$

For $k \geq 1$ define the random walk
$$ S^{(k)}(j) = w_{k,1} + w_{k,2} + \cdots + w_{k,j}; \quad S^{(k)}_j = 0 \;\text{for}\; j \leq 0.$$
Extend it to $j \in \mathbb{R}$ by linear interpolation.
Finally, define
$$ X^{(k}(t) = \frac{S^{(k)}(Nt) - c_{1,k}Nt}{\sqrt{c_{2,k} N}}.$$
By Donsker's Theorem, there is a coupling of the walks $X^{(k)}$ with i.i.d. standard Brownian motions $B^{\mathrm{st}}_k$, $k=1,2,\cdots$, such that
$$ (X^{(1)}, X^{(2)}, \ldots ) \to (B^{\mathrm{st}}_1,B^{\mathrm{st}}_2, \ldots) $$
uniformly on compacts, almost surely.
A computation shows that
$$X^{(k)}(t) = \frac{S^{(k)}(Nt)- c_1Nt}{\sqrt{c_2 N}} - 2\sqrt{2}c_3 \mu_k t + O(N^{-1/2}).$$
It follows that
$$ \frac{S^{(k)}(Nt)- c_1Nt}{\sqrt{c_2 N}} \to B_k^{\mathrm{st}}(t) + \mu_k t =: B^{\mu}_k(t)$$
for every $k$, where the convergence holds uniformly on compacts, almost surely.

We have that
$$G(m,n) = \max_{0 \leq n_0 \leq n_1 \leq \cdots \leq n_m=n} x_{n_0} + (S^{(1)}(n_1) - S^{(1)}(n_0-1) + \cdots + (S^{(m)}(n_m)-S^{(m)}(n_{m-1}-1))$$
We thus find that
$$ \frac{G(m,n) - c_1(n+m)}{\sqrt{c_2 N}} = \max_{0 \leq n_0 \leq n_1 \leq \cdots \leq n_m=n}
\frac{x_{n_0} - c_1n_0}{\sqrt{c_2 N}} + \sum_{k=1}^m X^{(k)}(n_k) - X^{(k)}(n_{k-1}-1)$$

Let us assume that
$$\frac{x_{\lfloor Nt \rfloor} - c_1Nt}{\sqrt{c_2 N}} \to b(t) \quad \text{uniformly on compacts}.$$
This will be the case if we start with a continuous $b(t)$ and then define $$x_n = c_1n + \lfloor \sqrt{c_2N}b(n/N)\rfloor.$$

Then we find that
$$ \frac{G(m, \lfloor Nt \rfloor) - c_1(Nt+m)}{\sqrt{c_2 N}} \to BLPP(b; (t,m))$$
where
$$BLPP(b;(t,m)) = \max_{0 \leq t_0 \leq t_1 \leq \cdots \leq t_m = t}
b(t_0) + \sum_{k=1}^m B^{\mu}_k(t_k) - B^{\mu}_k(t_{k-1})$$

To get the finite dimensional laws of $BLPP(b; (\cdot,m))$ one has to compute the large $N$ limit of
$$ \mathbf{Pr}(G(m,Nt_i) < c_1Nt_i + a_i\sqrt{c_2 N}; 1 \leq i \leq k).$$
This is equal to
$$ \mathbf{Pr}(X(m,Nt_i) > -(c_1+1)Nt_i - a_i \sqrt{c_2 N}; 1 \leq i \leq k)$$
for which we have the formula \eqref{eqn:Xdeterminant}.

\subsection{Proof of Theorem \ref{thm:BLPP}}

In order to prove the theorem, we have to analyse the asymptotics of
$$Pr(X(m,Nt_i) > -2Nt_i - a_i \sqrt{2N}; 1 \leq i \leq k)$$
using the right hand side of \eqref{eqn:Xdeterminant}. We have the extended kernel from \eqref{eqn:Kgeom};
after rescaling the Fredholm determinant, we need to analyse the asymptotics of the rescaled kernel
$$\sqrt{2N} K_G(Ns, -2Ns - x\sqrt{2N}; Nt, -2Nt-y\sqrt{2N})$$
with $s,t > 0$ and $x,y \in \mathbb{R}$. We need to show that it converges in the appropriate sense to the extended kernel
$$K(s,x;t,y)$$ in the statement of Theorem \ref{thm:BLPP} so that the Fredholm determinants also converge in the large $N$ limit.

Firstly we shall derive the limits of the constituent kernels $S_{m,-n}$, $\bar{S}_{m,n}$ and $S^{\mathrm{hypo}(x)}_{m,n}$
and then establish some decay estimates to get the limit of the Fredholm determinants.

Throughout the proof we shall choose the parameter $\theta = 1/2$; so $\alpha = 1$.

\subsubsection{Limits of the constituent kernels}
\begin{lem} \label{lem:Qlimit}
    Suppose $s < t$. Let $n_1 = Ns$, $n_2 = Nt$, $z_1 = -2Ns - x\sqrt{2N}$ and $z_2 = -2Nt -y\sqrt{2N}$. Then
    $$ \sqrt{2N} Q^{n_2-n_1}(z_1,z_2) \to e^{(t-s)\partial^2/2}(x,y).$$
    pointwise in $x,y \in \R$. Furthermore, one has the decay estimate: $ |\sqrt{2N} Q^{n_2-n_1}(z_1,z_2)| \leq C_{s,t} e^{\frac{(x-y)^2}{2(t-s)}}$ for all $x,y \in \mathbb{R}$.
\end{lem}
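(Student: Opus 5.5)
The plan is to recognise $Q^{n}$, at $\theta = 1/2$ (so $\alpha = 1$), as an explicit negative binomial probability and then apply a local central limit theorem. With $\theta=1/2$,
$$Q^n(z_1,z_2) = \frac{1}{2\pi\mathbf{i}}\oint_{|w|=r} dw\, 2^{z_2-z_1}\, w^{z_1-z_2+n-1}(1-w)^{-n}.$$
Put $n = n_2-n_1 = N(t-s)$ and
$$d = z_1 - z_2 = 2N(t-s) + (y-x)\sqrt{2N} = 2n + (y-x)\sqrt{2N}.$$
Since $r<1$, the only singularity inside the contour is at $w=0$. The residue there is the coefficient of $w^{d-n}$ in $(1-w)^{-n}$, so
$$Q^n(z_1,z_2) = \binom{d-1}{n-1} 2^{-d}\,\mathbf{1}_{d\ge n}.$$
This is $\mathbf{Pr}(\xi_1+\cdots+\xi_n = d)$, where the $\xi_i$ are i.i.d.\ with $\mathbf{Pr}(\xi=k)=2^{-k}$ for $k\ge1$. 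It matches the description of $Q$ as the transition probability of a walk with strictly leftward geometric steps. Each step has mean $2$ and variance $2$.

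\emph{Pointwise limit.} The sum $\xi_1+\cdots+\xi_n$ has mean $2n$ and variance $2n = 2N(t-s)$. The local CLT for this aperiodic integer-valued walk (span $1$) gives
$$\sup_k \Bigl|\sqrt{2n}\,\mathbf{Pr}\Bigl(\textstyle\sum \xi_i = k\Bigr) - \tfrac{1}{\sqrt{2\pi}}e^{-(k-2n)^2/4n}\Bigr| \to 0.$$
Take $k=d$, so $(d-2n)^2/4n = (y-x)^2/2(t-s)$, and multiply by $\sqrt{2N}/\sqrt{2n} = (t-s)^{-1/2}$. This yields
$$\sqrt{2N}\,Q^{n}(z_1,z_2)\to \frac{1}{\sqrt{2\pi(t-s)}}e^{-(x-y)^2/2(t-s)},$$
which is the heat kernel $e^{(t-s)\partial^2/2}(x,y)$. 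Alternatively, Stirling's formula applied to $\binom{d-1}{n-1}2^{-d}$ gives the same limit directly. The rounding of $Ns$, $Nt$ and $x\sqrt{2N}$ to integers only shifts $d$ and $n$ by $O(1)$ and does not affect the limit.

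\emph{Decay estimate.} As literally stated, with $e^{+(x-y)^2/2(t-s)}\ge1$ on the right-hand side, the bound follows from the uniform local CLT bound $\sup_k \sqrt{2n}\,\mathbf{Pr}(\sum\xi_i=k)\le C$. That bound is a consequence of the convergence above, or of $\sup_k \binom{k-1}{n-1}2^{-k} = O(n^{-1/2})$ by Stirling. For the Fredholm determinant convergence later one really wants a decaying bound, and I would also prove one. By the exponential Chebyshev inequality, for $|\lambda|<\log 2$,
$$\mathbf{Pr}\Bigl(\textstyle\sum\xi_i = d\Bigr)\le e^{-\lambda d}\bigl(\mathbf{E}e^{\lambda\xi}\bigr)^n, \qquad \mathbf{E}e^{\lambda\xi}=\frac{e^\lambda}{2-e^\lambda}.$$
Choosing $\lambda$ of order $(d-2n)/n$ gives $\mathbf{Pr}\le e^{-c(d-2n)^2/n}$ in the moderate range $|d-2n|\le \epsilon n$. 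Choosing $\lambda=\pm\lambda_0$ fixed gives $e^{-c|d-2n|}$ beyond that range. The lower side $d<n$ contributes zero.

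Translating back, this gives
$$|\sqrt{2N}Q^n(z_1,z_2)|\le C_{s,t}\,\sqrt{N}\,e^{-c\min\{(x-y)^2,\;|x-y|\sqrt N\}}.$$
Combining this with the uniform $O(1)$ bound, and splitting into the cases $|x-y|\le 1$ and $|x-y|>1$, removes the spurious $\sqrt N$ and yields $C_{s,t}e^{-c|x-y|}$ uniformly in $N$. The main obstacle is exactly this uniformity: the Gaussian regime and the large deviation regime have to be patched together without losing the prefactor. The tools are standard, namely the local CLT or Stirling in the bulk and a Chernoff bound in the tails.
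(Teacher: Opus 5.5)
Your argument for the statement as written is correct and follows the paper's route. The paper only observes that $Q^n$ is an explicit binomial probability and appeals to Stirling's formula. Your local CLT for a sum of $n$ i.i.d.\ geometric variables on $\{1,2,\dots\}$ (mean $2$, variance $2$) is the same asymptotics. Since the stated bound has $e^{+(x-y)^2/2(t-s)}\ge 1$ on the right, uniform boundedness of $\sqrt{2N}\,Q^{n_2-n_1}(z_1,z_2)$ is enough, as you say. Two points need correcting.

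First, your displayed integrand $2^{z_2-z_1}w^{z_1-z_2+n-1}(1-w)^{-n}=2^{-d}w^{d+n-1}(1-w)^{-n}$ is what the paper's formula for $Q^n$ literally gives at $\theta=1/2$. But it is analytic at $w=0$ when $d\ge n$, so as written the integral vanishes. The exponent of $w$ in the paper's formula has a sign slip. Expanding the generating function of the leftward geometric walk gives $\theta^{z_1-z_2}\,w^{-(z_1-z_2-n+1)}\bigl(\alpha/(1-w)\bigr)^n$, whose residue at $0$ is indeed $\binom{d-1}{n-1}2^{-d}$. Your residue computation silently uses this corrected form, so your binomial formula is right, but you should state the correction.

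Second, the step in your supplementary decay bound that ``removes the spurious $\sqrt N$'' does not work. The Chernoff bound carries no local factor $n^{-1/2}$, and taking its minimum with the uniform bound $C$ cannot restore it. For $1<|x-y|\le\sqrt{\log N/(2c)}$ and $N$ large one has $\min\{C,\,C\sqrt N e^{-c(x-y)^2}\}=C$, which is not bounded by $C'e^{-c'|x-y|}$ uniformly in $N$.

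What you need is a local bound under exponential tilting:
$\mathbf{Pr}(\xi_1+\cdots+\xi_n=d)=e^{-\lambda d}(\mathbf{E}e^{\lambda\xi})^n\,\mathbf{Pr}_\lambda(\xi_1+\cdots+\xi_n=d)$.
Under $\mathbf{Pr}_\lambda$ the $\xi_i$ are again geometric, with $\mathbf{Pr}_\lambda(\xi=k)\propto(e^{\lambda}/2)^k$. Moreover $\sup_d\mathbf{Pr}_\lambda(\xi_1+\cdots+\xi_n=d)\le Cn^{-1/2}$ uniformly in $|\lambda|\le\lambda_0<\log 2$, by bounding the characteristic function. Alternatively, apply Stirling directly to $\binom{d-1}{n-1}2^{-d}$, which is what the paper has in mind. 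With your choices of $\lambda$ this gives $\sqrt{2N}\,|Q^{n}(z_1,z_2)|\le C_{s,t}\,e^{-c\min\{(x-y)^2,\,|x-y|\sqrt N\}}$ uniformly in $N$.

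Keep the bound in this form rather than weakening it to $e^{-c|x-y|}$. The conjugation $e^{-\kappa_i|x|+\kappa_j|y|}$ used afterwards has large $\kappa$'s. This bound beats $e^{\kappa|x-y|}$ for any fixed $\kappa$ once $c\sqrt N\ge 2\kappa$, whereas a fixed-rate exponential does not.
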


\begin{proof}
    This follows from Stirling's approximation since the entries of $Q^n(z,z')$ are given in terms of Binomial coefficients. We do not provide the details as they are standard (see Lemma 5.1 in \cite{Rah} for a similar detailed argument).
\end{proof}

\begin{lem} \label{lem:Smnlimit}
    Suppose $s >0$. Let $n_1 = Ns$, $z_1 = -2Ns - \sqrt{2N}x$ and $z_2 = -\sqrt{2N}y$. Then,
    $$ \sqrt{2N} S_{m,-n_1}(z_1,z_2) = (N/2)^{m/2} S_{m, -s}(x,y) \cdot (1 + O(N^{-1/2})).$$
  The constant in the error term $O(N^{-1/2})$ is uniformly bounded in $N \geq 1$ and $x,y \in \R$.
\end{lem}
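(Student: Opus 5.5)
The plan is to do an explicit change of variables in the contour integral \eqref{eqn:Sgeom}, with $\theta = 1/2$ and $\alpha = 1$, zooming in at $w = 1/2$ on scale $N^{-1/2}$. This is where the poles $a_i = \tfrac12 + c_3\mu_i/\sqrt{N} = \tfrac12\bigl(1 + \mu_i/\sqrt{2N}\bigr)$ of $\phi_m$ accumulate. The integrand should then factor exactly into the integrand of $S_{m,-s}$ from \eqref{eqn:Smt}, times the constant $(N/2)^{m/2}$, times a correction factor that is $1+O(N^{-1/2})$ on the contour.

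First I record the exponents. With $n_1 = Ns$, $z_1 = -2Ns-\sqrt{2N}x$ and $z_2 = -\sqrt{2N}y$, we have
$$z_1 - z_2 + n_1 + 1 = -Ns + \sqrt{2N}(y-x) + 1.$$
For large $N$ this is negative, so $w^{-(z_1-z_2+n_1+1)}$ has no pole at $0$. The only singularities inside $|w| = r$ are therefore the poles $w = a_i$ of $\phi_m$, and I may replace $|w| = r$ by any small contour around them.

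I substitute $w = \tfrac12(1+u)$ with $u = z/\sqrt{2N}$, and let $z$ run over a fixed contour $\gamma$ enclosing $\mu_1,\dots,\mu_m$.
\begin{itemize}
\item Since $1 - a_i/w = (z-\mu_i)/\bigl(2\sqrt{2N}\,w\bigr)$, we get $\phi_m(w) = \prod_{i=1}^m \dfrac{2\sqrt{2N}\,(1-a_i)\,w}{z-\mu_i}$.
\item Because $(1-a_i)w = \tfrac14\bigl(1+O(N^{-1/2})\bigr)$ uniformly for $z \in \gamma$, this gives $\phi_m(w) = (N/2)^{m/2}\prod_i (z-\mu_i)^{-1}\,\bigl(1+O(N^{-1/2})\bigr)$.
\item All powers of $2$ coming from $2^{z_2-z_1}$, $w^{Ns-\sqrt{2N}(y-x)-1}$ and $(1-w)^{Ns}$ collect to the single factor $2$.
\item This factor $2$ cancels against $dw = dz/(2\sqrt{2N})$ and the prefactor $\sqrt{2N}$.
\end{itemize}
What remains, apart from the rational factor, is
$$(1-u^2)^{Ns}\,(1+u)^{\sqrt{2N}(x-y)-1}.$$
On $\gamma$ we have $u = O(N^{-1/2})$, so taking logarithms gives:
\begin{itemize}
\item $Ns\log(1-u^2) = -\tfrac{s}{2}z^2 + O(N^{-1})$;
\item $\bigl(\sqrt{2N}(x-y)-1\bigr)\log(1+u) = (x-y)z + O\bigl((1+|x-y|)N^{-1/2}\bigr)$.
\end{itemize}
Hence the rescaled integrand equals $(N/2)^{m/2}\, e^{-\frac{s}{2}z^2+(x-y)z}\prod_i(z-\mu_i)^{-1}\,\bigl(1+E_N(z;x,y)\bigr)$, and integrating over $\gamma$ gives the claimed asymptotics.

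The main obstacle is the claimed uniformity in $x,y\in\R$. The error $E_N$ produced by the second expansion grows like $|x-y|N^{-1/2}$. Moreover, a multiplicative error bound on the integral does not follow from one on the integrand if $S_{m,-s}(x,y)$ has zeros.

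My first attempt would be to handle the factor $e^{-(x-y)^2 z/(2\sqrt{2N})\cdots}$ more carefully. I would keep $(1+u)^{\sqrt{2N}(x-y)}$ exact and write it as $e^{(x-y)z}\,e^{(x-y)\,g(z/\sqrt{2N})\sqrt{2N}}$ with $g(u) = \log(1+u) - u = O(u^2)$. I would then absorb the second factor by shrinking $\gamma$ to radius $\asymp \min\bigl(1, N^{1/4}|x-y|^{-1/2}\bigr)$ around the $\mu_i$ when $|x-y|$ is large; this requires the $\mu_i$ to be distinct, or the residues to be handled directly.

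If that fails, I would settle for the weaker statement that actually suffices for the Fredholm determinant limit in Theorem \ref{thm:BLPP}. That statement is pointwise convergence, plus a bound of the form $|\sqrt{2N}(2/N)^{m/2}S_{m,-n_1}(z_1,z_2)| \le C\,e^{c|x-y|}$ uniformly in $N$, obtained by bounding the integrand on a fixed $\gamma$. After the conjugation discussed in Section \ref{sec:prem}, this bound is enough for dominated convergence.
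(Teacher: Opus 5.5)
Your computation is the paper's proof. You use the same rescaling $w=\tfrac12\bigl(1+z/\sqrt{2N}\bigr)$ and the same expansions of $\phi_m$, of $Ns\log(4w(1-w))$ and of $\sqrt{2N}(x-y)\log(2w)$; the paper then simply asserts that the error is uniform in $x,y$. You are right to distrust that assertion. The lemma is false as stated, and the paper offers no further idea. But neither of your two ways out works as written.

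\textbf{The contour-shrinking attempt cannot succeed.} The discrepancy is not an artefact of the contour. The factor $e^{(x-y)\sqrt{2N}\,g(z/\sqrt{2N})}$ is evaluated at $z$ near the $\mu_i$, where it is about $e^{-(x-y)\mu_i^2/(2\sqrt{2N})}$, however small the circles are. Explicitly, for $m=1$ (and $w=0$ not a pole) the integral is the single residue at $a_1$, and $\sqrt{2N}\,S_{1,-n_1}(z_1,z_2)\big/\bigl((N/2)^{1/2}S_{1,-s}(x,y)\bigr)$ equals exactly
\[
\Bigl(1-\tfrac{\mu_1}{\sqrt{2N}}\Bigr)\Bigl(1-\tfrac{\mu_1^2}{2N}\Bigr)^{Ns}e^{s\mu_1^2/2}\cdot\exp\Bigl\{(x-y)\Bigl[\sqrt{2N}\log\Bigl(1+\tfrac{\mu_1}{\sqrt{2N}}\Bigr)-\mu_1\Bigr]\Bigr\}.
\]
The first three factors are $1+O(N^{-1/2})$. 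The last is $\exp\{-(x-y)\mu_1^2/(2\sqrt{2N})+O(|x-y|/N)\}$, which is not uniformly close to $1$ when $\mu_1\neq 0$. Together with the zeros of $S_{m,-s}$ you noticed (they already occur for $m=2$), this rules out any uniform multiplicative statement. The right form is additive, as in your fallback.

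\textbf{The fallback needs a restriction that you and the paper both overlook.} Your claim that $w=0$ is not a pole holds only when $\sqrt{2N}(y-x)<Ns+m$, even after counting the zero of order $m$ of $\phi_m$ at $0$. Otherwise $|w|=r$ cannot be shrunk to a contour around the $a_i$, and the residue at $0$ is far from negligible. Take $m=1$, $\mu_1=0$, $n=Ns$ and $k=z_2-z_1$. Then:
\begin{itemize}
\item $S_{1,-n}(z_1,z_2)=\tfrac12$ for $k\ge n$;
\item $S_{1,-n}(z_1,z_2)=\tfrac12-2^{n-1}$ for $k=n-1$, i.e.\ $y-x=(Ns+1)/\sqrt{2N}$;
\item $S_{1,-n}(z_1,z_2)=0$ for $k<0$, i.e.\ $y-x>s\sqrt{2N}$, since then the integrand is $O(w^{-2})$ at infinity.
\end{itemize}
Meanwhile $S_{1,-s}\equiv 1$.

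So your argument proves the following, and only on $\{y-x\le\epsilon\sqrt N\}$ with $\epsilon<s/\sqrt2$:
\begin{itemize}
\item pointwise convergence with additive error $O\bigl((1+|x-y|)N^{-1/2}e^{c|x-y|}\bigr)$;
\item the bound $|\sqrt{2N}(2/N)^{m/2}S_{m,-n_1}(z_1,z_2)|\le Ce^{c|x-y|}$.
\end{itemize}
The complementary range of the intermediate variable must be controlled separately when $S_{m,-n_1}$ is composed with $S^{\mathrm{epi}(\tilde x)}_{m,n_2}$ in Theorem~\ref{thm:BLPP}. Your remark that the exponential bound suffices for dominated convergence does not cover this range, and the paper does not address it either.
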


\begin{proof}
We have that
    \begin{align*}
        S_{m,-n_1}(z_1,z_2) &= \frac{1}{2 \pi \mathbf{i}} \oint_{\gamma} \frac{dw}{w}\, (2w)^{z_2-z_1} (\frac{1-w}{w})^{n_1} \phi_m(w) \\
        &= \frac{1}{2 \pi \mathbf{i}} \oint_{\gamma} \frac{dw}{w}\, e^{Ns F(w) + \sqrt{2N} G(w)} \phi_m(w).
    \end{align*}
    Here,
    $$F(w) = \log(1-w) +\log w + 2\log(2); \quad G(w) = (x-y)\log(2w).$$
    Note that there is no pole at $w = 0$ when $N$ is sufficiently large.
    So we can deform the contour $\gamma$ to contain only the poles at $w = a_i$.
    
    Choose the contour $\gamma$ in the definition of $S_{m,-n_1}$ as follows. Let $d = 1 + \max_{i} |\mu_i|$.
    Let $$w = w(\theta) = \frac{1}{2} + \frac{de^{\mathbf{i}\theta}}{2 \sqrt{2N}}, \quad |\theta| \leq \pi.$$
    So $\gamma$ is a circular contour centred at $1/2$ and with radius $c_3d/\sqrt{N}$ (which therefore contains all the $a_i$).

    The critical point of $F$ is at $w = 1/2$:
    $$F(1/2) = 0; F'(1/2) = 0; F''(1/2) = -8.$$
    Consider the rescaling
    \begin{equation} \label{eqn:wz2}
    w = \frac{1}{2} + \frac{z}{2\sqrt{2N}}.
    \end{equation}
    We have that $|z| = d$ along $\gamma$ so that the $w$-contour becomes the circular contour $|z|=d$ in the variable $z$.
    Then,
    $$F(w) = -\frac{1}{2N}z^2 + O(d^3N^{-3/2}), \quad G(w) = (x-y) \frac{z}{\sqrt{2N}} + O(d^2 N^{-1}).$$
    We also have that $\sqrt{2N} dw/w = dz (1 + O(N^{-1/2})$. Finally, a computation shows that
    $$\phi_m(w) = (N/2)^{m/2} \times \prod_{i=1}^m (z-\mu_i)^{-1} \cdot (1 + O(N^{-1/2}).$$

    The lemma follows from these estimates.
\end{proof}

\begin{lem} \label{lem:Sbarlimit}
    Suppose $s < t$. Let $n = N(t-s)$, $z_1 = -2Ns - x\sqrt{2N}$ and $z_2 = -2Nt -y\sqrt{2N}$. Then,
    $$ \sqrt{2N} \bar{S}_{m,n}(z_1,z_2) = (N/2)^{-m/2} \bar{S}_{m,(t-s)}(x,y)\cdot (1 + O(N^{-1/2})).$$
  The constant in the error term $O(N^{-1/2})$ is uniformly bounded in $N \geq 1$ and $x,y \in \R$.
\end{lem}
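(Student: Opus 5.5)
We follow the proof of Lemma \ref{lem:Smnlimit}: write the kernel as a contour integral, localise the contour at the critical point $w=0$ of the exponent on the scale $N^{-1/2}$, and Taylor expand. With $\theta = 1/2$ and $\alpha = 1$, \eqref{eqn:Sbargeom} reads
$$\bar{S}_{m,n}(z_1,z_2) = \frac{1}{2\pi\mathbf{i}}\oint_{|w|=\delta} dw\, 2^{z_2-z_1}\,\frac{(1-w)^{z_2-z_1+n-1}}{w^n}\,\phi_m(1-w)^{-1}.$$
Put $n = N(t-s)$ and $z_2 - z_1 = -2N(t-s) - (y-x)\sqrt{2N}$. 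Then $z_2 - z_1 + n = -N(t-s) - (y-x)\sqrt{2N}$, and the integrand becomes
$$\frac{1}{1-w}\, e^{N(t-s)F(w) + \sqrt{2N}\,G(w)}\,\phi_m(1-w)^{-1},$$
where
$$F(w) = -\log(1-w) - \log w - 2\log 2, \qquad G(w) = (x-y)\log\bigl(2(1-w)\bigr).$$
This is just the previous $F$ with $w\mapsto 1-w$ and the sign reversed, up to the $\log w$ term. The only pole of the integrand is at $w=0$, since $\phi_m(1-w)^{-1} = \prod_i (1-a_i/(1-w))/(1-a_i)$ is analytic near $0$. We may therefore take $\gamma$ to be any small circle around $0$.

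The natural substitution is $w = \frac{1}{2} - \frac{z}{2\sqrt{2N}}$. Under it the relevant critical point is $w=1/2$, where $1-w$ and $w$ are both close to $1/2$. The difficulty is that the contour must enclose $w=0$, not pass through $1/2$. The fix is to rewrite the integrand in the variable $u = 1-w$, so that $w = 1-u$ and the pole sits at $u=1$. Equivalently, we replace the circle $|w|=\delta$ by a contour through $w=1/2$. This deformation is legitimate: in $w$, the only singularities are $w=0$ (inside) and $w=1$ together with the points $1-a_i \approx 1/2$. Those last points come from zeros of $1-a_i/(1-w)$ in the denominator of $\phi_m$, but $\phi_m^{-1}$ is a polynomial in $1/(1-w)$, so they are zeros, not poles. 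Hence any closed contour enclosing $0$ but not $1$ gives the same value. We choose the vertical segment $w = \frac12 - \frac{\mathbf{i}v}{2\sqrt{2N}}$, closed by a large arc to the left of $1$ around $0$. We verify that on this arc $\Re F$ is strictly smaller than at $w=1/2$, so the arc contributes an exponentially small amount. Concretely, $|w(1-w)|$ is maximised on the line $\Re w = 1/2$ exactly at $w=1/2$ along the closing contour; we take the closing contour to be a level set of $|w(1-w)|$ bulging around $0$.

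Near $w = 1/2$ we expand:
$$N(t-s)F = \frac{t-s}{2}z^2 + O(N^{-1/2}|z|^3), \qquad \sqrt{2N}\,G = (x-y)z + O(N^{-1/2}|z|^2),$$
$$\sqrt{2N}\,dw = -\tfrac12 dz, \qquad \frac{1}{1-w} = 2\,(1+O(N^{-1/2})).$$
For the product, $1 - a_i/(1-w)$ with $a_i = \frac12 + \frac{c_3\mu_i}{\sqrt N}$ and $1-w = \frac12 + \frac{z}{2\sqrt{2N}}$ gives
$$\phi_m(1-w)^{-1} = (N/2)^{-m/2}\prod_i (z-\mu_i)\,(1+O(N^{-1/2})).$$
The constants $2$ and $-\tfrac12$ cancel. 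Since the $w$-segment is traversed downward, the $z$-contour is the upward vertical line $\Re z = 0$, which is exactly $\bar{S}_{m,t-s}(x,y)$ with $d=0$.

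The main obstacle is the claimed uniformity of the error in $x,y\in\R$: the factor $e^{(x-y)z}$ is purely oscillatory on $\Re z=0$, but the correction $e^{(x-y)O(z^2/\sqrt N)}$ is not uniformly small when $|x-y|$ is large. We handle this with a cutoff. For $|z| \le N^{1/10}$ we use the Taylor bounds. For $|z|\ge N^{1/10}$ we use the Gaussian decay $e^{-(t-s)|z|^2/4}$, which dominates the polynomial factor. We choose $G$ exactly along the line, so that $\log(2(1-w))$ has imaginary part $\arg$ only and $e^{\sqrt{2N}G}$ has modulus $|2(1-w)|^{(x-y)\sqrt{2N}}$; this modulus is $1+O(v^2/N)$ raised to a power of order $|x-y|\sqrt N$. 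If this spoils full uniformity, we would settle for uniformity on compacts together with a bound of the type in Lemma \ref{lem:Qlimit}. That weaker statement is all that is needed for the Fredholm convergence.
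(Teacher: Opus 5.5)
Your argument is the paper's: the same $F$ and $G$, the saddle point $w=\frac12$ with $F''(\frac12)=8$ (so the steepest-descent direction is vertical), the rescaling $w=\frac12\pm z/(2\sqrt{2N})$, and a global bound on $\Re F$ to control the tails. The only real difference is the contour. The paper keeps the circle $|w|=\frac12$, which already encloses $0$, avoids $1$, is vertical at $\frac12$, and carries $\Re F=-\log(2|1-w|)$, strictly decreasing away from $\frac12$. So it needs no closing arc, and it finishes by dominated convergence with the dominating function $e^{-\frac{t-s}{200}\theta^2+10|x-y||\theta|+C\log(1+|\theta|)}$. Your line $\Re w=\frac12$, on which $F=-\log\bigl(1+\frac{v^2}{2N}\bigr)$ exactly, closed by the left half of a Cassini oval $|w(1-w)|=c>\frac14$, works equally well. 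Your cutoff at $|z|=N^{1/10}$ in fact gives an additive $O(N^{-1/2})$ rate for $x,y$ in compacts, which the dominated-convergence argument does not.

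Your doubt about uniformity in $x,y$ is justified, and the paper's proof does not settle it either, since its dominating function depends on $|x-y|$. In fact the multiplicative form of the statement is false. Take $m=1$, $\mu_1=0$, $x=y=0$. Then $\bar S_{1,t-s}(0,0)=0$, because the integrand is odd on $\Re z=0$. But the residue at $w=0$ gives $\bar S_{1,n}(-2Ns,-2Nt)=4^{-n}\bigl(\binom{2n}{n}-\binom{2n}{n-1}\bigr)>0$. So what you (and the paper) can actually prove is $\sqrt{2N}\,(N/2)^{m/2}\bar S_{m,n}(z_1,z_2)=\bar S_{m,t-s}(x,y)+O(N^{-1/2})$, locally uniformly. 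The $N$-uniform domination in $x,y$ that the Fredholm step needs is asserted, not proved, in your fallback. The paper's later remark that these errors are uniformly bounded in $x,y$ rests on the false multiplicative version.

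Two slips to fix. First, on $\Re w=\frac12$ one has $|w(1-w)|=\frac14+(\Im w)^2$, which is minimised (not maximised) at $w=\frac12$; that is exactly why $\Re F$ peaks there. Second, the segment is the right-hand side of a positively oriented loop around $0$, so it is traversed upward. Then $z$ runs downward, and the minus sign in $\sqrt{2N}\,dw=-\frac12\,dz$ is what restores the upward orientation. With $w$ traversed downward as you wrote, the factor $2\cdot(-\frac12)=-1$ would be left over.
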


\begin{proof}
We have that
    \begin{align*}
        \bar{S}_{m,n}(z_1,z_2) &= \frac{1}{2 \pi \mathbf{i}} \oint_{\gamma} \frac{dw}{1-w} (2(1-w))^{z_2-z_1} (\frac{1-w}{w})^n \phi_m(1-w)^{-1} \\
        &= \frac{1}{2 \pi \mathbf{i}} \oint_{\gamma} \frac{dw}{1-w} e^{N(t-s)F(w) + \sqrt{2N}G(w)} \phi_m(1-w)^{-1},
    \end{align*}
    where
    $$F(w) = -\log(1-w) -\log w -2\log(2); \quad G(w) = (x-y)\log (2(1-w)).$$
    The contour $\gamma$ is chosen to be the circle $\{|w|=1/2\}$; we parametrize it as
    $$ w = w(\theta) = \frac{1}{2} e^{\mathbf{i}\theta/\sqrt{2N}}, \quad |\theta| \leq \pi \sqrt{2N}.$$

    The critical point of $F$ is at $w = 1/2$:
    $$F(1/2) = 0; F'(1/2) = 0; F''(1/2) = 8.$$
    Locally, if $|\theta| \leq L$, then $w$ has the form
    \begin{equation} \label{eqn:wz3}
    w = \frac{1}{2}\left (1 + \frac{z}{\sqrt{2N}}\right ); \quad 1-w = \frac{1}{2} \left (1 - \frac{z}{\sqrt{2N}}\right) \end{equation}
    with $|z| \leq L$. Then, assuming that $|z| \leq L$,
    $$F(w) = \frac{1}{2N}z^2 + O_L(N^{-3/2}), \quad G(w) = -(x-y) \frac{z}{\sqrt{2N}} + O_L(N^{-1}).$$
    We also have that $\sqrt{2N} dw/(1-w) = dz (1 + O(N^{-1/2})$. Finally, a computation shows that
    $$\phi_m(1-w)^{-1} = (N/2)^{-m/2} \prod_{i=1}^m (-z-\mu_i) \cdot (1 + O(N^{-1/2})).$$
    Locally, the contour $\gamma$ becomes the vertical line $\Re(z) = 0$ oriented upwards (the tangent at $w = 1/2)$.

    Globally, we have $\Re(F(w)) = -\log(2|1-w|)$, which equals $- \log(5-4\cos(\theta/\sqrt{2N}))$ for $|\theta| \leq \pi \sqrt{2N}$.
    An exercise shows that $-\log(5-4\cos(x)) \leq - \frac{1}{100}x^2$ for $|x| \leq \pi$. As such, $\Re(F(w)) \leq - \frac{\theta^2}{200 N}$
    for all $\theta$. From this it is easy to verify that the integrand is bounded by
    $$ e^{- \frac{(t-s)}{200}\theta^2 + 10 |x-y| |\theta| + C_{\mu_1,\cdots, \mu_m} \log(1+|\theta|)}.$$

    By the dominated convergence theorem, and changing variables $z \mapsto -z$, we get to the assertion of the lemma.
    
\end{proof}

\begin{lem} \label{lem:Sepilimit}
    Let $t > 0$. Suppose $n_2 = Nt$, $z_1 = -\sqrt{2N}x$, $z_2 = -2Nt -\sqrt{2N}y$. Then,
    $$ S^{epi(\tilde{x})}_{m,n_2}(z_1,z_2) = (N/2)^{-m/2} S^{hypo(b)}_{m,t}(x,y) \cdot (1 + O(N^{-1/2}).$$
\end{lem}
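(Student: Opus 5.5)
We write $S^{epi(\tilde x)}_{m,n_2}(z_1,z_2)$ via its definition \eqref{eqn:Shypogeom} as an expectation over the walk $B$ with $-\mathrm{Geom}$ steps, stopped at the hitting time $\tau$ of the boundary $\tilde x_{n+1}$. We then pass to the limit inside the expectation. For this we need three things: the walk and its hitting time converge to Brownian motion and its hitting time of $b$; the integrand $\bar S_{m,n_2-\tau}(B_\tau,z_2)$ converges by Lemma \ref{lem:Sbarlimit}; and dominated convergence applies.

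\textbf{Step 1 (diffusive rescaling of the walk).} With $\theta=1/2$, each step of $B$ has mean $-2$ and variance $2$. Starting from $z_1=-\sqrt{2N}x$, define
\[
W_N(u)=-\frac{B_{\lfloor Nu\rfloor}+2Nu}{\sqrt{2N}} .
\]
By Donsker's theorem, $W_N\to W$, a standard Brownian motion from $x$, uniformly on compacts; we work under a coupling where this holds almost surely. The boundary $\tilde x_{n+1}=-x_{n+1}-(n+1)$, with $x_n=n+\lfloor\sqrt{2N}b(n/N)\rfloor$, rescales to
\[
-\frac{\tilde x_{\lfloor Nu\rfloor}+2Nu}{\sqrt{2N}}\to b(u),
\]
and the event $B_n>\tilde x_{n+1}$ becomes $W_N(n/N)<b_N(n/N)$. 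So $\tau/N$ is the discrete analogue of $\inf\{s:W(s)\le b(s)\}$.

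We show $\tau/N\to\tau_W$ and $W_N(\tau/N)\to W(\tau_W)=b(\tau_W)$ almost surely. This uses the standard fact that Brownian motion started above a continuous boundary crosses strictly below it immediately after first touching it, so the first hitting time is a continuity point of the path functional. The set $\{\tau_W=t\}$ has probability zero, so the indicator $\mathbf 1_{\tau<n_2}$ also converges almost surely.

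\textbf{Step 2 (convergence of the integrand).} Put $s=\tau/N$ and $x'=W_N(s)$, so that $B_\tau=-2Ns-\sqrt{2N}x'$. Lemma \ref{lem:Sbarlimit} gives
\[
\sqrt{2N}\,\bar S_{m,N(t-s)}(B_\tau,z_2)=(N/2)^{-m/2}\,\bar S_{m,t-s}(x',y)\,(1+O(N^{-1/2})).
\]
The function $\bar S_{m,u}(x',y)$ is jointly continuous in $(u,x')$ for $u>0$. Combined with Step 1, the integrand converges pointwise to $\bar S_{m,t-\tau_W}(W_{\tau_W},y)\mathbf 1_{\tau_W\le t}$. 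The factor $\sqrt{2N}$ is absorbed by the normalization of the Fredholm determinant, as in the other lemmas.

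\textbf{Step 3 (domination).} This is the main obstacle. Lemma \ref{lem:Sbarlimit} has uniform error constants only for fixed $t-s>0$. As $s\uparrow t$, the kernel $\bar S_{m,t-s}=D_{m,y}e^{(t-s)\partial^2/2}$ blows up like $(t-s)^{-(m+1)/2}$. The law of $\tau/N$ near $t$ must therefore be controlled.

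The first approach we would try is to avoid this singularity: write $\bar S=D_{m,y}\,e^{\cdot\,\partial^2/2}$ and move $D_{m,y}$ outside the expectation, as in $S^{hypo(b)}=D_{m,y}S^{hit(b)}$. In the discrete setting the analogue is $\bar S_{m,n}(z,z_2)=\phi_m$-difference operator in $z_2$ applied to a transition probability. The expectation is then a discrete hitting probability $\mathbf P(\tau<n_2,\,B_{n_2}=z_2)$, which is bounded by the walk's transition density and converges by the local CLT together with Step 1. The difference operator is then handled on the contour side. We rewrite it through $e^{\sqrt{2N}G}$ factors, with the integrated contour expressions giving Gaussian bounds uniform on compacts in $y$, which is enough for pointwise convergence.

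If commuting the operator out of the expectation is awkward, the fallback is to split into $\tau/N\le t-\epsilon$ and $\tau/N>t-\epsilon$. On the first part we use the uniform bounds from the proof of Lemma \ref{lem:Sbarlimit}. On the second part we need a separate estimate that vanishes as $\epsilon\to0$ uniformly in $N$; this uses the Gaussian decay of the heat kernel together with smoothness of $b$ near $t$. The uniformity of the $O(N^{-1/2})$ error also needs care in this regime.
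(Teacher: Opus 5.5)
Your Steps 1 and 2 are exactly the paper's proof. It uses the same Donsker coupling for $B^N(s)=-(B_{\lfloor Ns\rfloor}+2Ns)/\sqrt{2N}$ and the same almost sure convergence of $\tau_N/N$ to the Brownian hitting time of $b$. It then rewrites $S^{epi}$ as an expectation of $\bar S_{m,N(t-\tau^N)}(B_{\tau_N},z_2)$ and applies Lemma~\ref{lem:Sbarlimit} with continuity inside the expectation. Your remark about $\sqrt{2N}$ also correctly accounts for the factor missing from the printed statement. The paper stops there and never justifies exchanging the limit with the expectation. So the issue you isolate in Step~3 (Lemma~\ref{lem:Sbarlimit} is not uniform as $t-\tau^N\downarrow 0$) is an omission in the paper as well, not something its argument resolves.

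Your Step~3, however, is a plan rather than a proof, and route (a) does not remove the obstacle.
\begin{itemize}
\item With $\theta=1/2$ one has exactly $\bar S_{m,n}(z,z_2)=\mathcal{D}_{z_2}Q^n(z,z_2)$, where $\mathcal{D}=\prod_{i=1}^m(1-a_i)^{-1}(1-2a_iT)$ and $T$ is the shift $z_2\mapsto z_2-1$. So commuting is trivial and gives $S^{epi}_{m,n_2}(z_1,z_2)=\mathcal{D}_{z_2}\mathbf{Pr}(\tau<n_2,\,B_{n_2}=z_2\mid B_0=z_1)$.
\item But convergence of $\sqrt{2N}\,\mathbf{Pr}(\tau<n_2,\,B_{n_2}=\cdot\,)$ says nothing about its $m$-th differences multiplied by $(N/2)^{m/2}$. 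Estimating those puts $\mathcal{D}$ back onto $Q^{n_2-\tau}(B_\tau,z_2)$, which is just the original integrand.
\end{itemize}

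Route (b) does work, and it needs only continuity of $b$, not smoothness.
\begin{itemize}
\item Since $\tilde x_{\tau+1}<B_\tau\le \tilde x_\tau-1$, the rescaled position is within $o(1)$ of $b(\tau/N)$. Hence for $y\neq b(t)$ on $\{\tau/N\in(t-\epsilon,t)\}$, the last $k=n_2-\tau\le\epsilon N$ steps must produce a displacement of order $\sqrt{N}\,|y-b(t)|$ beyond the drift.
\item The missing ingredient is a difference estimate of the form $|(1-T)^jQ^k(z,z')|\le Ck^{-(j+1)/2}e^{-c\min\{(z-z'-2k)^2/k,\,|z-z'-2k|\}}$, uniform in $k\ge 1$. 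It can be read off from the explicit formula $Q^k(z,z')=2^{-(z-z')}\binom{z-z'-1}{k-1}$.
\item Expand $\mathcal{D}$ in powers of $1-T$ and $\mu_i/\sqrt{2N}$. The estimate then bounds the contribution of $\{\tau/N>t-\epsilon\}$ by $C\sup_{r\ge 1/\epsilon}r^{(m+1)/2}e^{-c'r}+o_N(1)$, with $c'>0$ depending on $|y-b(t)|$.
\item On $\{\tau/N\le t-\epsilon\}$ the bounds in the proof of Lemma~\ref{lem:Sbarlimit} suffice, because the rescaled starting point stays bounded.
\item The crude bound $|\mathcal{D}Q^k|\le C\sum_{j\le m}Q^k(\cdot,z_2-j)$ loses a factor $N^{m/2}$ and is not enough.
\end{itemize}

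Finally, neither your argument nor the paper's gives the uniform relative error $1+O(N^{-1/2})$ claimed in the statement. An almost sure coupling yields no rate. Also, a relative error uniform in $(x,y)$ is not to be expected where $S^{hypo(b)}_{m,t}(x,\cdot)$ vanishes. What is actually obtained is pointwise convergence for $y\neq b(t)$. That is what the Fredholm limit uses, together with a separate uniform bound on the discrete kernels.
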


\begin{proof}
    Define the process $B^N(s) = - \frac{B_{\lfloor Ns \rfloor} + 2Ns}{\sqrt{2N}}$ for $s \in [0,1]$. By Donsker's Theorem, there is a coupling of the processes $B^N$ for every $N$ together with a Brownian motion $B(s)$ such that $B^N(s) \to B(s)$ uniformly on compacts, almost surely.

    Recall the hitting time $\tau = \tau_N$, namely,
    $$\tau_N = \min \{n=0,1,\ldots: B_n > \tilde{x}_{n+1}\}.$$
    Let $\tau^N = \tau_N/N$, which converges almost surely to
    $$\tau = \inf \{s \in \geq 0: B(s) \leq b(s)\}$$
    under the aforementioned coupling.

    We find that
    $$S^{epi(y)}_{m,n_2}(z_1,z_2) = \mathbf{E}\left[ \bar{S}_{m,N(t-\tau^N)}(-2N \tau^N -\sqrt{2N} B^n(\tau^N), -2Nt - \sqrt{2N}y) \mathbf{1}_{\tau^N < t} \mid B^N(0)=x\right].$$
    The conclusion now follows from Lemma \ref{lem:Sbarlimit} above, the almost sure convergence of the aforementioned random walks to Brownian motion and continuity of $S_{t,m}(x,y)$ in the parameters $t \geq 0$ and $x,y \in \mathbb{R}$.
\end{proof}

The lemmas \ref{lem:Qlimit}, \ref{lem:Smnlimit}, \ref{lem:Sbarlimit} and \ref{lem:Sepilimit} provide the pointwise convergence of kernels comprising the limit $K$.
To complete the proof, we have to establish decay estimates so that the Fredholm series expansion converges absolutely.
It is enough to show, by way of Hadamard's inequality, that on the $(i,j)$-block of the kernel $K$ we have an estimate of the form $|K(i,x;j,y)| \leq f_i(x) g_j(y)$
where $f_i$ is bounded over $[a_i,\infty)$ and $g_j$ is integrable over $[a_j,\infty)$.
The errors provided by the aforementioned lemmas are uniformly bounded in the variables $x,y$ of the kernel.
So it is enough to establish decay estimates for the kernels $e^{(t-s)\partial^2/2}$ and $S_{B,n,-t} \cdot S^{hypo(b)}_{B,n,s}$.
This will also confirm that $K$ has an absolutely convergent Fredholm series.

In order to get decay estimates, we have to conjugate the kernel $K$. The conjugation factor we need for the $(i,j)$ block is
\begin{equation} \label{eqn:conjugation}
    e^{-\kappa_i |x| + \kappa_j |y|}
\end{equation}
where the constants $\kappa_i$ are sufficiently large and have to satisfy $\kappa_i > \kappa_j$ when $i < j$.
For example, we may choose $\kappa_i = C -i$ for a large constant $C$ that depends on $t_1, \ldots, t_k$ and $\mu_1, \ldots, \mu_m$.

With this conjugation factor, arguing as in the proof of \cite[Lemma 4.5]{Rah}, it follows that
$$e^{-\kappa_i |x| + \kappa_j |y|} \, e^{\frac{t_j-t_i}{2} \partial^2}(x,y) \ind{t_i < t_j} \leq f_i(x) g_j(y)$$
where $f_i$ is bounded and $g_j$ is integrable over $\R$.

\begin{lem} \label{lem:Hermitedecay}
    Suppose $m \geq 1$ and $t \in [0,T]$. There is a constant $C = C_{m,T, \mu_1,\ldots, \mu_m}$ such that
    $$ |S_{m,-t}(x,y)| \leq e^{C(|x-y|+1)} \quad \text{and} \quad |\bar{S}_{m,t}(u,v)| \leq C (|x-y|^n+1) e^{\frac{t}{2}\partial^2}(x,y).$$
\end{lem}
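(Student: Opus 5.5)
The plan is to treat the two kernels separately. In both cases the contour is chosen so that the exponential factor is explicit, and the rest of the integrand is bounded by a polynomial.

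\emph{First estimate.} In \eqref{eqn:Smt} the integrand is entire apart from the poles at $z=\mu_1,\dots,\mu_m$. So we fix the contour $\gamma$ to be the circle $|z|=R$ with $R=1+\max_i|\mu_i|$, independently of $t,x,y$.
\begin{itemize}
\item On $\gamma$ we have $|\prod_i(z-\mu_i)^{-1}|\le 1$.
\item We have $|e^{-\frac t2 z^2}|\le e^{TR^2/2}$ for $t\in[0,T]$.
\item We have $|e^{(x-y)z}|\le e^{R|x-y|}$.
\end{itemize}
The contour has length $2\pi R$, so $|S_{m,-t}(x,y)|\le R\,e^{TR^2/2}e^{R|x-y|}$, which is of the form $e^{C(|x-y|+1)}$. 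This step is routine and uniform in $t\in[0,T]$, including $t=0$.

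\emph{Second estimate.} By \eqref{eqn:Sbarmt} the contour can be any vertical line. For $t>0$ we take the steepest-descent line $\Re z=d$ with $d=(y-x)/t$ and write $z=d+\mathbf{i}s$. Then
$$\tfrac t2 z^2+(x-y)z=-\tfrac{(x-y)^2}{2t}-\tfrac t2 s^2+\mathbf{i}(\dots),$$
so
$$|\bar S_{m,t}(x,y)|\le \frac{e^{-(x-y)^2/2t}}{2\pi}\int_{\R}ds\,e^{-ts^2/2}\prod_{i=1}^m\big(|d|+|s|+|\mu_i|\big).$$
Next we expand the product and use $\int e^{-ts^2/2}|s|^k\,ds=c_k t^{-(k+1)/2}$. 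Comparing with $e^{\frac t2\partial^2}(x,y)=(2\pi t)^{-1/2}e^{-(x-y)^2/2t}$, this gives
$$|\bar S_{m,t}(x,y)|\le C\sum_{k+l\le m}|d|^k t^{-l/2}\,e^{\frac t2\partial^2}(x,y)=C\sum_{k+l\le m}\frac{|x-y|^k}{t^k}\,t^{-l/2}\,e^{\frac t2\partial^2}(x,y).$$
Equivalently, $\bar S_{m,t}=D_{m,y}e^{t\partial^2/2}$ is a combination of $\partial_y^k$ of the heat kernel, each of which equals $t^{-k/2}\mathrm{He}_k((y-x)/\sqrt t)$ times the heat kernel. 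This yields the bound $C(|x-y|^m+1)\,e^{\frac t2\partial^2}(x,y)$ with $C$ depending on $m$, $T$, the $\mu_i$ and a lower bound on $t$.

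\emph{Main obstacle.} This step is the uniformity in $t$. The factors $t^{-k}$ and $t^{-l/2}$ are genuinely present: already for $m=1$ and $\mu_1=0$ one has $\bar S_{1,t}=\frac{x-y}{t}\,e^{\frac t2\partial^2}$. So one constant valid down to $t=0$ cannot hold in the stated form, and I do not expect to prove it. I would settle for the bound on $t\in[\epsilon,T]$ with $C=C_{m,\epsilon,T,\mu}$, which is all that is used in the argument that follows. There $t$ ranges over the gaps $t_j-t_i>0$ for the kernels $e^{(t_j-t_i)\partial^2/2}$ and $\bar S_{m,t_j-t_i}$, which are bounded below, and over $t-\tau$ inside $S^{\mathrm{hypo}(b)}$. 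For the latter, the small-$(t-\tau)$ singularity should be absorbed by the hitting density rather than by a uniform bound on $\bar S$: writing $S^{\mathrm{hypo}(b)}_{m,t}=D_{m,y}S^{\mathrm{hit}(b)}_t$, one would differentiate $S^{\mathrm{hit}(b)}_t$ only after integrating in $\tau$. I would try that first.
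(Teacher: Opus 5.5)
Your proof is correct and follows the paper's argument. For $S_{m,-t}$ the paper uses a rectangle crossing the real axis at $\mu_{\min}-1$ and $\mu_{\max}+1$, with horizontal sides at $\pm\mathbf{i}$, where you use the circle $|z|=1+\max_i|\mu_i|$. For $\bar S_{m,t}$ the paper uses exactly your ``equivalently'' remark: $D_{m,y}e^{t\partial^2/2}$ is a combination of Hermite polynomials times the heat kernel. Your steepest-descent line $\Re z=(y-x)/t$ is just an explicit way of reading off the same powers of $t$ and $|x-y|$.

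Your objection to uniformity in $t$ is also correct, and it is a defect of the statement, not of your proof. The paper's one-line justification overlooks that the Hermite polynomials enter as $t^{-k/2}\mathrm{He}_k((y-x)/\sqrt t)$. Your example $|\bar S_{1,t}(x,y)|=\frac{|x-y|}{t}\,e^{\frac t2\partial^2}(x,y)$ shows that no constant can work as $t\downarrow 0$. So the second bound holds only in one of two forms: with $C$ depending also on a lower bound for $t$, or in the form $C\sum_{k+l\le m}|x-y|^k t^{-k-l/2}\,e^{\frac t2\partial^2}(x,y)$ that you derive.

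The restricted version is not quite ``all that is used'', as you yourself go on to note. Proposition~\ref{prop:SSdecay} applies the bound to $\bar S_{m,t-\tau}$ with $t-\tau$ arbitrarily small, so that step needs a separate argument. Your idea of integrating in $\tau$ before differentiating is plausible, but it is only a sketch. It must also be carried out for $y$ away from the boundary value, because $S^{\mathrm{hit}(b)}_t(x,\cdot)$ need not be smooth there; for $b\equiv 0$ it has a kink at $y=0$.
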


\begin{proof}
    Note that $\bar{S}_{m,t} = \prod_{i=1}^m (\partial - \mu_i) \cdot e^{t\partial^2/2}$. It follows from this that the kernel of $\bar{S}_{m,t}$
    is a linear combination of Hermite polynomials of degree at most $m$ multiplied by the heat kernel $e^{t\partial^2/2}$. The bound follows
    from this.

    Now consider $S_{m,-t}(x,y)$. Let $\mu_{min} = \min_i \mu_i$ and $\mu_{max} = \max_i \mu_i$.
    Choose the contour $\gamma$ in its definition to be a rectangle that intersects the real axis at the points $-1+\mu_{min}$ and $1 + \mu_{max}$
    and has imaginary part equal to $\pm 1$ along the horizontal sides. Then, $|\prod_{i=1}^m z-\mu_i| \geq 1$. The exponential factor in the integrand
    is easily seen to be bounded by $\exp \{\text{constant} + |x-y| (1+\max \{\mu_{max},-\mu_{min}\})\}$. As the contour of integration has length
    $6 + 2(\mu_{max}-\mu_{min})$, the bound follows.
\end{proof}

\begin{prop} \label{prop:SSdecay}
    Let $a \in \R$ and $0 < s,t \leq T$ and $m \geq 1$. Set $M_b = \max_{y \in [0,T]} |b(y)|$.
    There is a constant $C = C_{a, T, M_b, m, \mu_i}$ such that for all $x \in \R$ and $y \geq a$,
    $$ |S_{m,-s} \cdot S^{hypo(b)}_{m,t}(u,v) | \leq e^{C(|x|+1)} (|y|^m + 1)e^{-\frac{y^2}{4T}}.$$
\end{prop}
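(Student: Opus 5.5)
We write the composition explicitly as
$$S_{m,-s}\cdot S^{hypo(b)}_{m,t}(x,y)=\int_{\R}dz\, S_{m,-s}(x,z)\, \mathbf{E}\left[\bar{S}_{m,t-\tau}(W_\tau,y)\ind{\tau\le t}\mid W(0)=z\right],$$
and bound each factor using Lemma \ref{lem:Hermitedecay}. The first factor satisfies $|S_{m,-s}(x,z)|\le e^{C(|x-z|+1)}\le e^{C(|x|+1)}e^{C|z|}$. This is where the factor $e^{C(|x|+1)}$ in the statement will come from, and it remains to control the $z$-integral of $e^{C|z|}$ against the hitting kernel.

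For the second factor, on the event $\{\tau\le t\}$ we have $W_\tau=b(\tau)$ whenever $z>b(0)=0$, since the path starts above the boundary and first touches it at $\tau$. When $z\le 0$ we have $\tau=0$ and $W_\tau=z$. We therefore split the integral into these two cases.

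\emph{Case $z\le 0$.} The contribution is $\bar{S}_{m,t}(z,y)$. By Lemma \ref{lem:Hermitedecay},
$$|\bar{S}_{m,t}(z,y)|\le C(|z-y|^m+1)\,e^{t\partial^2/2}(z,y).$$
For $y\ge a$ and $z\le 0$, the Gaussian $e^{-(y-z)^2/2t}$ dominates $e^{C|z|}$, and after integrating in $z$ we are left with at most $(|y|^m+1)e^{-(y^+)^2/2t}$ times constants. Here one uses $(y-z)^2\ge y^2/2 + z^2/2 - O(1)$ type estimates when $y\ge 0$. Since $t\le T$, this is $\le C(|y|^m+1)e^{-y^2/4T}$; for $y\in[a,0]$ the bound is trivial.

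\emph{Case $z>0$.} Here $|W_\tau|\le M_b$ and $0\le t-\tau\le t$. The difficulty is that $t-\tau$ may be close to $0$, so the heat kernel $e^{(t-\tau)\partial^2/2}(b(\tau),y)$ is not uniformly bounded. I would handle this by not bounding pointwise in $\tau$. Instead, I would use the representation $S^{hypo(b)}_{m,t}(z,y)=D_{m,y}S^{hit(b)}_t(z,y)$ together with the strong Markov reflection-type bound
$$\mathbf{E}\left[(|b(\tau)-y|^m+1)e^{(t-\tau)\partial^2/2}(b(\tau),y)\ind{\tau\le t}\right]\le \sup_{u\in[0,t]}\text{(density-type bound)}.$$
Concretely, I would condition on $\tau$, note that the law of $\tau$ has a density bounded by the first-passage density of level $-M_b$ from $z$, and estimate
$$\int_0^t \frac{z+M_b}{\sqrt{2\pi u^3}}e^{-(z-M_b)_+^2/2u}\,(|y|^m+1)\frac{e^{-(y-M_b)_+^2/2(t-u)}}{\sqrt{t-u}}\,du.$$
This convolution of first-passage and heat kernels is of heat-kernel type. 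It is bounded by $C(|y|^m+1)e^{-c(z^2+y^2)/t}$, up to polynomial factors, with $c$ close to $1/2$ after absorbing $M_b$; one gets $y^2/4T$ with room to spare, possibly after sacrificing polynomial factors via $e^{-\epsilon y^2}$.

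Integrating $e^{C|z|}e^{-cz^2/T}$ over $z>0$ gives a constant, which finishes the bound. I expect the main obstacle to be this last step: making the near-diagonal singularity $t-\tau\to0$ rigorous uniformly in $y$, together with the derivatives from $D_{m,y}$. If the direct convolution estimate is messy, I would instead apply the operator $D_{m,y}$ through its contour form. That is, I would shift the vertical contour $\Re(w)=d$ in $\bar{S}_{m,t-\tau}$ with $d$ chosen depending on $y$, which is the standard saddle choice $d\approx (y-b(\tau))/(t-\tau)$ capped appropriately. This yields Gaussian decay in $y$ directly, with constants uniform in $\tau\in[0,t]$.
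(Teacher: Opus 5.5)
\textbf{Verdict: genuine gap.} Your reduction is the one the paper intends; it defers to \cite{Rah} together with Lemma~\ref{lem:Hermitedecay}. You bound $|S_{m,-s}(x,z)|\le e^{C(|x|+1)}e^{C|z|}$ and treat $z\le 0$ (where $\tau=0$) directly. For $z>0$ you use $W_\tau=b(\tau)\in[-M_b,M_b]$ and the Gaussian tail of $\mathbf{Pr}_z(\tau\le t)$ to absorb $e^{C|z|}$.

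This closes in two regimes:
\begin{itemize}
\item for $z\le 0$, provided $t$ is bounded below;
\item for $z>0$ when $y\ge M_b+1$, since then $|y-b(\tau)|\ge 1$ gives $\sup_{0<u\le T}|\bar S_{m,u}(b(\tau),y)|\le C(|y|^m+1)e^{-y^2/4T}$.
\end{itemize}

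The gap is the step you flag: $z>0$, with $y$ near the range of $b$ and $\tau\to t$. Neither of your remedies works.
\begin{itemize}
\item The law of $\tau$ admits no density bound in terms of $M_b$. A rise of $b$ by a fixed amount over a window of length $\epsilon$ puts mass bounded below (independently of $\epsilon$) into that window. Your displayed formula also mixes the levels $\pm M_b$.
\item More seriously, your integrand $(|y|^m+1)(t-u)^{-1/2}e^{-(y-M_b)_+^2/2(t-u)}$ treats Lemma~\ref{lem:Hermitedecay} as uniform in the time parameter, and it is not. Write $p_u(v,y)=e^{u\partial^2/2}(v,y)$. Then $\bar S_{m,u}(v,y)$ is an $m$-th order derivative of $p_u(v,y)$, so it genuinely has size $u^{-(m+1)/2}$ when $|v-y|\lesssim\sqrt u$.
\item No choice of contour, including your saddle-point shift, can bound $\bar S_{m,u}$ by less than its true modulus. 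So it cannot yield constants uniform in $\tau$.
\end{itemize}
With absolute values inside the expectation you get at best $\int_0^T u^{-(m+1)/2}e^{-r^2/2u}\,du\asymp r^{1-m}$, where $r=|y-b(t)|$. This is unbounded for $m\ge 2$, even for $b\equiv 0$, where the true value $\bar S_{m,t}(-z,y)$ is bounded. Cancellation in $\tau$ is essential.

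Moreover, the step cannot be repaired in the stated generality. Take $m=1$ and $\mu_1=0$, so $S_{1,-s}\equiv 1$ and the kernel is $\int dz\,S^{hypo(b)}_{1,t}(z,y)$.
\begin{itemize}
\item For $b\equiv 0$ the kernel equals $2p_t(0,y)$ when $y>0$. This is unbounded as $t\downarrow 0$, so uniformity over $0<t\le T$ already fails.
\item For fixed $t$, take a continuous $b\le 0$ with $b(0)=0$ and $b(u)=-\sqrt{t-u}$ near $u=t$. For $y>0$ the kernel is $p_t(0,y)-\Psi'(y)$, where $\Psi(y)=\int_{z>0}S^{hit(b)}_t(z,y)\,dz$.
\item By time reversal, $\Psi$ is a smooth function minus the probability $\phi(y)$ that Brownian motion from $y$ stays above $u\mapsto -\sqrt u$.
\item One has $\phi(y)\asymp y^{\kappa}$ with $\kappa<1$. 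In the variables $W_u/\sqrt u$ and $\log u$ this is an Ornstein--Uhlenbeck survival problem on $(-1,\infty)$. Its principal Dirichlet eigenvalue lies strictly below the value $1/2$ it takes on $(0,\infty)$.
\item Hence $-\Psi'$ is unbounded as $y\downarrow b(t)$, for every $x$.
\end{itemize}

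The paper's sketch gets past this point only because Lemma~\ref{lem:Hermitedecay} is stated with a constant uniform over $t\in[0,T]$. Granting that bound, the expectation is harmless and needs no density of $\tau$: by the strong Markov property, $\mathbf{E}_z[p_{t-\tau}(W_\tau,y)\mathbf{1}_{\tau\le t}]=S^{hit(b)}_t(z,y)\le p_t(z,y)$. But that uniform bound is false for small times. A correct version needs $t$ bounded below and regularity of $b$ near time $t$, with constants depending on more than $M_b$. Alternatively, it needs an integrable rather than pointwise estimate in $y$.
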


The proof of this Proposition is given in \cite[Proposition 5.1]{Rah} with minor changes to the argument.
One needs to use the bounds from Lemma \ref{lem:Hermitedecay} and follow the aforementioned proof, and so we omit the details.

Proposition \ref{prop:SSdecay} shows that if we conjugate $S_{m,-t_i} \cdot S^{hypo(b)}_{m,t_j}$ by the factor in \eqref{eqn:conjugation},
then it is bounded in absolute value by $f_i(x) g_j(y)$ where $f_i$ is bounded over $\R$
and $g_j$ is integrable over $[a_j,\infty)$. So the Fredholm series expansion of $\dt{I-\chi_aK\chi_a}$ with the conjugation is absolutely convergent.
This completes the proof of Theorem \ref{thm:BLPP}.

\subsection{Computation of some kernels: the narrow wedge and flat boundaries}

Firstly, consider the so called narrow wedge boundary, whereby $b_{nw}(0) = 0$ and $b_{nw}(t) = -\infty$ for $t > 0$.
We can approximate it with the continuos functions $b_L(t) = -L t$ in the limit $L \to \infty$.
Under this approximation, the hitting times $\tau_L = \inf \{ s \geq 0: W(s) \leq - Ls\}$ converge, monotonically and almost surely, to
the hitting time $\tau = \inf\{s \geq0: W(s) \leq b_{nw}(s)\}$. Note that $\tau$ is always $\infty$ unless $W(0) \leq b_{nw}(0) = 0$,
in which case $\tau = 0$. Therefore, $$S^{\mathrm{hypo}(b_{nw})}_{m,t}(x,y) = \ind{x \leq 0} \bar{S}_{m,t}(x,y).$$

Consequently,
\begin{align*}
    S_{m,-t_1} \cdot S^{\mathrm{hypo}(b_{nw})}_{m,t_2}(x,y) &= \frac{1}{(2 \pi \mathbf{i})^2} \int_{-\infty}^0 du \oint_{\gamma} dw \oint_{\Gamma} dz \\
    & e^{u(z-w)}\, e^{- \frac{t_1}{2}w^2 + xw} e^{\frac{t_2}{2}z^2-yz}\prod_{i=1}^m \frac{z-\mu_i}{w-\mu_i} \\
    &= \frac{1}{(2 \pi \mathbf{i})^2} \oint_{\gamma} dw \oint_{\Gamma} dz \\
    & \frac{e^{\frac{t_2}{2}z^2-yz}}{e^{\frac{t_1}{2}w^2 - xw}} \prod_{i=1}^m \frac{z-\mu_i}{w-\mu_i} \cdot \frac{1}{z-w}.\\
\end{align*}
Here we need to arrange the contours so that $\Re(z-w) > 0$ always, so $\Gamma$ lies to the right of $\gamma$ ($\Gamma > \gamma$).

Define the kernel
\begin{equation} \label{eqn:Knw}
    K_{nw}(t_1,x;t_2,y) = \frac{1}{(2 \pi \mathbf{i})^2} \oint_{\gamma} dw \oint_{\Gamma} dz
    \frac{e^{\frac{t_2}{2}z^2-yz}}{e^{\frac{t_1}{2}w^2 - xw}} \prod_{i=1}^m \frac{z-\mu_i}{w-\mu_i} \cdot \frac{1}{z-w}
\end{equation}
where $\gamma$ is a closed contour containing all the poles at $w=\mu_i$ and $\Gamma$ is a vertical contour that lies to the right of $\gamma$.

We have established
\begin{prop} \label{prop:nw}
    Let $BLPP((0,0); (m,t)) = \max_{0=t_0 \leq t_1 \leq \cdots \leq t_m = t} \sum_{k=1}^m B^{\mu}_k(t_k) - B^{\mu}_k(t_{k-1})$
    where $B^{\mu}_k$ are independent Brownian motions with respective drift $\mu_k$. Then, for $m \geq 1$ and $0 < t_1 < \cdots < t_k$,
    $$ \mathbf{Pr}(BLPP((0,0); (t_i,m)) \leq a_i, 1 \leq i \leq k) = \det(I -\chi_a K \chi_a)_{L^2(\{t_1,\ldots, t_k\}\times \R)}$$
    where
    $$ K(t_i,x;t_j,y) = - e^{(t_j-t_i)\partial^2/2}\ind{t_i < t_j} + K_{nw}(t_i,x;t_j,y)$$
    and $\chi_a(t_i,x) = \ind{x \geq a_i}$.
\end{prop}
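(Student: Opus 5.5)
Proposition~\ref{prop:nw} follows from Theorem~\ref{thm:BLPP}, the approximation $b_L(t)=-Lt$, and the kernel computation just carried out. The proof has three steps.

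\emph{Step 1: convergence of the probabilities.} For each $L$, $b_L$ is continuous with $b_L(0)=0$, so Theorem~\ref{thm:BLPP} gives
$$\mathbf{Pr}\big(BLPP(b_L;(t_i,m))\le a_i,\,1\le i\le k\big)=\det(I-\chi_aK^{(L)}\chi_a),$$
where $K^{(L)}$ is the kernel \eqref{eqn:KBrownian} built from $S^{\mathrm{hypo}(b_L)}$. Almost surely $BLPP(b_L;(t,m))$ decreases to $BLPP((0,0);(t,m))$, since the supremum over $t_0>0$ carries the penalty $-Lt_0$ and the Brownian paths are continuous. The limit variable has a continuous law; for instance, it is the top curve of a Dyson Brownian motion with drifts, given through the $h$-transform. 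Hence the left-hand sides converge to the desired joint distribution function at every $(a_i)$.

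\emph{Step 2: pointwise convergence of the kernels.} The heat-kernel part of $K^{(L)}$ does not depend on $L$. For the other part, $\tau_L\downarrow\tau$ almost surely, with $\tau=0$ on $\{x\le0\}$ and $\tau=\infty$ on $\{x>0\}$. For $x>0$ we have $\Pr(\tau_L\le t)\to0$, while for $x<0$ we have $\tau_L\to0$ and $W_{\tau_L}\to x$. By Lemma~\ref{lem:Hermitedecay}, $\bar S_{m,t-\tau_L}$ is bounded by a polynomial times a heat kernel, and it is continuous in its time and space parameters for time parameter in $(0,t]$. Dominated convergence therefore gives $S^{\mathrm{hypo}(b_L)}_{m,t}(x,y)\to\ind{x\le0}\bar S_{m,t}(x,y)$ for $x\neq0$.

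Composing with $S_{m,-t_1}$ and applying dominated convergence in the $u$-integral yields the $u$-integral representation above. The growth of $S_{m,-t_1}$ is at most $e^{C|x-u|}$ by Lemma~\ref{lem:Hermitedecay}, and this is beaten by the Gaussian decay of $\bar S$ in $u$. To reach $K_{nw}$, I would choose the contours so that $\Re z>\Re w$ everywhere, say $\gamma$ a small circle around the $\mu_i$ and $\Gamma$ the line $\Re z=d>\max\mu_i+1$. Then $\int_{-\infty}^0e^{u(z-w)}du=1/(z-w)$, and Fubini is justified by the absolute integrability of $e^{\frac{t_2}{2}z^2}$ along vertical lines, since $\Re z^2\to-\infty$ there.

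\emph{Step 3: convergence of the Fredholm determinants (the main obstacle).} I need a bound on the conjugated kernels that is uniform in $L$, in order to apply the dominated convergence criterion of Section~\ref{sec:prem}. The plan is to check that the constant in Proposition~\ref{prop:SSdecay} can be taken independent of $L$. In \cite[Proposition 5.1]{Rah}, $M_b$ enters only through a lower bound on where the walk can be stopped; the stopping location is $W_{\tau_L}=-L\tau_L\le0$. So I would instead bound $|S^{\mathrm{hypo}(b_L)}_{m,t}(x,y)|$ by $\sup_{0\le r\le t}\E{|\bar S_{m,t-r}(W_{\tau_L},y)|\,;\,\tau_L\le t}$ and use the Hermite-times-heat-kernel bound from Lemma~\ref{lem:Hermitedecay}. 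This gives a Gaussian factor in $y$ for $y\ge a$, uniformly in $L$, up to a polynomial in $|x|$.

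One point needs care: the singular behaviour of $\bar S_{m,t-r}$ as $r\to t$. I would handle it as in \cite{Rah}, by splitting according to whether $\tau_L\le t/2$ or not. On $\{t/2<\tau_L\le t\}$, I would use the strong Markov property together with the convolution identity
$$\bar S_{m,t-r}=e^{(t-r)\partial^2/2}\cdot D_{m}\quad\text{acting on the heat kernel}.$$
With the conjugation \eqref{eqn:conjugation}, this gives a bound $f_i(x)g_j(y)$ independent of $L$. Dominated convergence then gives convergence of the determinants to $\det(I-\chi_aK\chi_a)$ with $K=-e^{(t_j-t_i)\partial^2/2}\ind{t_i<t_j}+K_{nw}$, proving the proposition.
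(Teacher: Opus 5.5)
Your proposal is correct and follows the paper's route: approximate the narrow wedge by $b_L(t)=-Lt$, use $S^{\mathrm{hypo}(b_L)}_{m,t}(x,y)\to\mathbf{1}_{x\le 0}\,\bar S_{m,t}(x,y)$, and evaluate $\int_{-\infty}^0 e^{u(z-w)}\,du=1/(z-w)$ with $\Re(z-w)>0$ to obtain $K_{nw}$. Your Steps 1 and 3 supply the convergence of the probabilities and of the Fredholm determinants, which the paper leaves implicit. Two small corrections: for $x>0$, $\tau_L$ increases in $L$ rather than decreases; and on $\{t/2<\tau_L\le t\}$ the stopping location $-L\tau_L\le -Lt/2$ lies far below $y\ge a_j$ once $L$ is large, so the singularity of $\bar S_{m,t-\tau_L}$ is harmless and the strong Markov argument is unnecessary.
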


Now we turn to the computation of the kernel for the flat boundary $b \equiv 0$. We have
$S_{m,-t_1} S^{\mathrm{hypo}(0)}_{m,t_2} = S_{m,-t_1} \chi_{(-\infty,0]}S^{\mathrm{hypo}(0)}_{m,t_2} + S_{m,-t_1} \chi_{(0,\infty)} S^{\mathrm{hypo}(0)}_{m,t_2}$.
But when $x \leq 0$, $S^{\mathrm{hypo}(b)}_{m,t}(x,y) = \bar{S}_{m,t}(x,y)$ because then the hitting time $\tau = 0$. Therefore,
\begin{equation} \label{eqn:SflatA} S_{m,-t_1} \chi_{(-\infty,0]}S^{\mathrm{hypo}(0)}_{m,t_2}(x,y) = K_{nw}(t_1,x;t_2,y).\end{equation}
Next, consider $S_{m,-t_1} \chi_{(0,\infty)} S^{\mathrm{hypo}(0)}_{m,t_2}$. Recall that
$$ S^{\mathrm{hypo}(b)}_{m,t}(x,y) = \prod_{i=1}^m (-\partial_y-\mu_i) \cdot S_t^{\mathrm{hit}(b)}(x,y)$$
where $S_t^{\mathrm{hit}(b)}(x,y) = \mathbf{Pr}(\tau \le t, W(t) \in dy\mid W(0)=x)$.
For the flat boundary, by the reflection principle, we find that for $x > 0$,
$$S_t^{\mathrm{hit}(0)}(x,y) = e^{t\partial^2/2}(x,y) \ind{y \leq 0} + e^{t \partial^2/2}(-x,y) \ind{y >0}.$$
Upon writing the heat kernel as a contour integral, we thus find that
\begin{align*}
    S^{\mathrm{hypo}(0)}_{m,t}(x,y) &= \frac{\ind{y \leq 0}}{2 \pi \mathbf{i}} \oint_{\Gamma} dz\, e^{\frac{t}{2}z^2 + (x-y)z} \prod_{i=1}^m (z-\mu_i)
    + \frac{\ind{y > 0}}{2 \pi \mathbf{i}} \oint_{\Gamma} dz\, e^{\frac{t}{2}z^2 + (-x-y)z} \prod_{i=1}^m (z-\mu_i).
\end{align*}
A computation now gives
\begin{equation} \label{eqn:SflatB}
    S_{m,-t_1} \chi_{(0,\infty)} S^{\mathrm{hypo}(0)}_{m,t_2}(x,y) = A_1(x,y) + A_2(x,y)
\end{equation}
where
\begin{align*}
    A_1(x,y) &= - \frac{\ind{y \leq 0}}{(2 \pi \mathbf{i})^2} \oint_{\gamma} dw \oint_{\Gamma} dz
    \frac{e^{\frac{t_2}{2}z^2-yz}}{e^{\frac{t_1}{2}w^2 - xw}} \prod_{i=1}^m \frac{z-\mu_i}{w-\mu_i} \cdot \frac{1}{z-w}; \\
    A_2(x,y) &= \frac{\ind{y > 0}}{(2\pi \mathbf{i})^2} \oint_{\gamma} dw \oint_{\Gamma} dz
    \frac{e^{\frac{t_2}{2}z^2-yz}}{e^{\frac{t_1}{2}w^2 - xw}} \prod_{i=1}^m \frac{z-\mu_i}{w-\mu_i} \cdot \frac{1}{z+w}.
\end{align*}
In the integral for $A_1$, the vertical contour $\Gamma$ is to the right of the closed contour $\gamma$ enclosing all the poles at $w=\mu_i$ ($\Gamma > \gamma$ so that $\Re(z-w)>0$ always). Similarly, in the integral for $A_2$, the vertical contour $\Gamma$ is to the right of the closed contour $-\gamma$ ($\Gamma > -\gamma$ so that $\Re(z+w)>0$ always).

Combining \eqref{eqn:SflatB} with \eqref{eqn:SflatA} we deduce that
\begin{align} \label{eqn:Kflat}
    S_{m,-t_1} \cdot S^{\mathrm{hypo}(0)}_{m,t_2}(x,y) &= \frac{\ind{y > 0}}{(2 \pi \mathbf{i})^2} \oint_{\gamma} dw \oint_{\Gamma} dz
    \frac{e^{\frac{t_2}{2}z^2-yz}}{e^{\frac{t_1}{2}w^2 - xw}} \prod_{i=1}^m \frac{z-\mu_i}{w-\mu_i} \cdot \frac{1}{z-w} \\ \nonumber
    & +
    \frac{\ind{y > 0}}{(2\pi \mathbf{i})^2} \oint_{\gamma} dw \oint_{\Gamma} dz
    \frac{e^{\frac{t_2}{2}z^2-yz}}{e^{\frac{t_1}{2}w^2 - xw}} \prod_{i=1}^m \frac{z-\mu_i}{w-\mu_i} \cdot \frac{1}{z+w}
\end{align}
with the contours arranged as indicated above.

We have thus established
\begin{prop} \label{prop:flat}
    Let $BLPP(0; (t,m)) = \max_{0\leq t_0 \leq t_1 \leq \cdots \leq t_m = t} \sum_{k=1}^m B^{\mu}_k(t_k) - B^{\mu}_k(t_{k-1})$
    where $B^{\mu}_k$ are independent Brownian motions with drift $\mu_k$. Then, for $m \geq 1$ and $0 < t_1 < \cdots < t_k$,
    $$ \mathbf{Pr}(BLPP(0; (t_i,m)) \leq a_i, 1 \leq i \leq k) = \det(I -\chi_a K \chi_a)_{L^2(\{t_1,\ldots, t_k\}\times \R)}$$
    where
    $$ K(t_i,x;t_j,y) = - e^{(t_j-t_i)\partial^2/2}\ind{t_i < t_j} + K_{flat}(t_i,x;t_j,y)$$
    with $K_{flat}$ given by \eqref{eqn:Kflat} and $\chi_a(t_i,x) = \ind{x \geq a_i}$.
\end{prop}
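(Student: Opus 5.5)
Proposition \ref{prop:flat} is Theorem \ref{thm:BLPP} specialised to the flat boundary $b \equiv 0$. Since $b\equiv 0$ is continuous with $b(0)=0$, Theorem \ref{thm:BLPP} applies directly. It then only remains to identify the kernel $S_{m,-t_1}\cdot S^{\mathrm{hypo}(0)}_{m,t_2}$ with $K_{flat}$ from \eqref{eqn:Kflat}. I would split the composition integral over the intermediate variable $u$ into $u\le 0$ and $u>0$, as in the text, and treat each piece.

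\emph{The piece $u \le 0$.} For a Brownian motion started at $u\le 0$ the hitting time is $\tau=0$, so $S^{\mathrm{hypo}(0)}_{m,t_2}(u,y)=\bar S_{m,t_2}(u,y)$ there. The $u$-integral is then exactly the narrow wedge computation already carried out: insert the contour integrals for $S_{m,-t_1}$ and $\bar S_{m,t_2}$, and use Fubini and $\int_{-\infty}^0 e^{u(z-w)}du = 1/(z-w)$ for $\Re(z-w)>0$. Fubini is justified because $e^{\frac{t_2}{2}z^2}$ decays like a Gaussian along $\Gamma$ and $\gamma$ is compact. This gives \eqref{eqn:SflatA}.

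\emph{The piece $u>0$.} Here I use the reflection principle formula for $S^{\mathrm{hit}(0)}_{t}(u,y)$, then apply $D_{m,y}$ and write each heat kernel as a vertical contour integral.
\begin{itemize}
\item For the reflected term, with $y>0$, the $u$-dependence is $e^{-u(z+w)}$. Integrating over $u>0$ converges when $\Re(z+w)>0$ and gives $1/(z+w)$; this is $A_2$.
\item For the unreflected term, with $y\le 0$, the $u$-dependence is $e^{u(z-w)}$. Integrating over $u>0$ needs $\Re(z-w)<0$, so I first place $\Gamma$ to the left of $\gamma$, which gives $-1/(z-w)$. I then move $\Gamma$ to the right of $\gamma$. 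For fixed $w$ inside $\gamma$ this crosses the simple pole at $z=w$. Its residue is $e^{\frac{t_2-t_1}{2}w^2+(x-y)w}$: the product $\prod_i (z-\mu_i)/(w-\mu_i)$ equals $1$ at $z=w$, so the residue is entire in $w$. Its integral over the closed contour $\gamma$ therefore vanishes, and the deformation costs nothing. This produces $A_1$ in the stated form.
\end{itemize}

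\emph{Combining.} $A_1$ is exactly $-\ind{y\le 0}$ times the integrand of $K_{nw}$. Hence $K_{nw}+A_1 = \ind{y>0}K_{nw}$, and adding $A_2$ gives \eqref{eqn:Kflat}.

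The main obstacle is the careful bookkeeping of contour placement in the $u>0$ unreflected term, namely the sign and the vanishing residue. Everything else, including decay and Fredholm convergence, is inherited from the proof of Theorem \ref{thm:BLPP}.
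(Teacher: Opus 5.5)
Your proposal is correct and follows the paper's own route: specialise Theorem \ref{thm:BLPP} to $b\equiv 0$, split the composition at $u=0$ to recover $K_{nw}$ as in \eqref{eqn:SflatA}, use the reflection principle on $u>0$ to obtain $A_1+A_2$, and combine. Your handling of the unreflected term is correct and supplies the detail the paper compresses into ``a computation now gives'': integrate with $\Gamma$ to the left of $\gamma$ to get $-1/(z-w)$, then move $\Gamma$ across $z=w$, where the residue $e^{\frac{t_2-t_1}{2}w^2+(x-y)w}$ is entire and so integrates to zero over $\gamma$.
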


\section{Proof of Theorem \ref{thm:arithmetic}} \label{sec:arithmetic}
Let $\Delta > 0$ be fixed. For $n \geq 1$, consider the arithmetic progressions $$\mu^n_i = \mu^n_1 -\Delta \cdot (i-1) \;\;\text{for}\;\; 1 \leq i \leq n.$$

Consider, for each $n$, the Brownian last passage model $BLPP((0,0);(t,n))$ with drifts given by $\mu^n_i$.
According to \cite[Proposition 4.2]{OCJ}, for the $n \times n$ random Hermitian matrix
$$ H = \mathrm{diag}(\mu^n_i) + H^{\mathrm{GUE}},$$
the law of its largest eigenvalue $\lambda_{max}(H)$ satisfies
$$ \lambda_{max}(H) \stackrel{law}{=} BLPP((0,0);(1,n)).$$
Define the random variable $\chi_n$ according to
$$ BLPP((0,0);(1,n)) = \mu^n_1 + \frac{\log(n-1) + \chi_n}{\Delta}.$$
Theorem \ref{thm:arithmetic} thus follows from

\begin{thm} \label{thm:blpparithmetic}
    For $a \in \R$, as $n \to \infty$,
    $$\mathbf{Pr}(\chi_n \leq a) \to \det(I-K_{\Delta})_{L^2[a,\infty)}$$
    where
    $$K_{\Delta}(x,y) = \frac{1}{(2 \pi \mathbf{i})^2} \oint_{\gamma_{rec}}d\zeta \oint_{\gamma_{ver}} dz\,
    \frac{e^{\frac{\Delta^2}{2} z^2 -yz}}{e^{\frac{\Delta^2}{2}\zeta^2-x\zeta}} \cdot \frac{\Gamma(\zeta)}{\Gamma(z)}\cdot \frac{1}{z-\zeta}.$$
    Here $\gamma_{ver}$ is the vertical contour $\{\Re(z) = 1\}$ oriented upwards, and $\gamma_{rec}$ is the counter clockwise oriented contour
    $\{t \pm \mathbf{i}/2; t \leq 1/2\} \cup \{1/2 + \mathbf{i}t; |t| \leq 1/2\}$.
\end{thm}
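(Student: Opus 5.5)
The plan is to apply Proposition \ref{prop:nw} with a single time $t_1=1$, $m=n$ and drifts $\mu^n_i=\mu^n_1-\Delta(i-1)$. This gives
$$\mathbf{Pr}(\chi_n\le a)=\det(I-K_{nw})_{L^2[s_n,\infty)},\qquad s_n=\mu^n_1+\frac{\log(n-1)+a}{\Delta},$$
where $K_{nw}=K_{nw}(1,\cdot;1,\cdot)$. I would then rescale the space variables and both contour variables so that the kernel becomes a kernel on $L^2[a,\infty)$ converging to $K_\Delta$. Finally I would prove a dominating bound of the form $f(x)g(y)$, uniform in $n$, so that the convergence facts from Section \ref{sec:prem} apply.

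\textbf{Change of variables.} Set $x=\mu^n_1+(L_n+x')/\Delta$ and $y=\mu^n_1+(L_n+y')/\Delta$, where $L_n=\log(n-1)$; the Jacobian $1/\Delta$ goes into the kernel. In the contour integral put $w=\mu^n_1+\Delta\zeta$ and $z=\mu^n_1+\Delta\tilde z$. The poles $w=\mu^n_i$ become $\zeta=0,-1,\dots,-(n-1)$, and
$$\prod_{i=1}^n\frac{z-\mu_i}{w-\mu_i}=\frac{\Gamma(\tilde z+n)\,\Gamma(\zeta)}{\Gamma(\zeta+n)\,\Gamma(\tilde z)}.$$
Expanding the exponent, the terms in $\mu^n_1\Delta\tilde z$ cancel, and what remains is $\tfrac{\Delta^2}{2}\tilde z^2-(L_n+y')\tilde z$ up to additive constants. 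Those constants are of the form $c(x')-c(y')$, so they disappear after a conjugation. The factors $\Delta$ from $dz$ and $dw$ cancel against the factor $\Delta$ in $1/(z-w)$, and the Jacobian $1/\Delta$ from $dy$ absorbs the remaining $\Delta$ from $dw$.

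The rescaled kernel is therefore $K_\Delta$ with the extra factor
$$R_n(\zeta,\tilde z)=\frac{\Gamma(\tilde z+n)}{\Gamma(\zeta+n)}\,(n-1)^{\zeta-\tilde z}.$$
By Stirling's formula $R_n\to1$ locally uniformly in $\zeta,\tilde z$, which gives pointwise convergence of the kernels.

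\textbf{Contours.} For finite $n$ I take $\tilde z\in\gamma_{ver}$. For $\zeta$, I first observe that $\Gamma(\zeta)/\Gamma(\zeta+n)=1/\prod_{k=0}^{n-1}(\zeta+k)$ has exactly the $n$ poles $0,\dots,-(n-1)$ and decays like $|\zeta|^{-n}$. Meanwhile $|e^{-\Delta^2\zeta^2/2}|$ decays like a Gaussian along the horizontal rays $\Im\zeta=\pm1/2$, $\Re\zeta\to-\infty$. Hence the closed contour around these poles can be opened up to the infinite contour $\gamma_{rec}$ at no cost. This keeps $\Re(\tilde z-\zeta)\ge1/2$, so the condition $\Gamma>\gamma$ from Proposition \ref{prop:nw} is preserved.

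\textbf{Uniform domination (main obstacle).} The hard step is bounding $R_n$ uniformly in $n$ along $\gamma_{rec}$, including the region where $\Re\zeta$ is comparable to $-n$ and Stirling's approximation for $\Gamma(\zeta+n)$ fails. I would split $\gamma_{rec}$ into two parts.
\begin{itemize}
\item On $\Re\zeta\ge-n/2$: Stirling gives $|(n-1)^{\zeta}/\Gamma(\zeta+n)|\le C\,|\Gamma(n)|^{-1}(1+|\zeta|)^{C}$ with polynomial losses, and this is dominated by $|e^{-\Delta^2\zeta^2/2}|$.
\item On $\Re\zeta\le-n/2$: the crude bounds $|1/\Gamma(\zeta+n)|\le e^{C|\zeta|\log(2+|\zeta|)}$ and $(n-1)^{\Re\zeta}\le1$ are enough, since they are still overwhelmed by $e^{-\Delta^2(\Re\zeta)^2/2+C}$.
\end{itemize}
For $\tilde z\in\gamma_{ver}$, Stirling for $\Gamma(\tilde z+n)/\Gamma(n)$ with $\Re\tilde z=1$ gives $|R_n|$-contributions of size $\le C(1+|\tilde z|)^{C}$. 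These are beaten by the decay of $1/\Gamma(\tilde z)$ together with the bounded oscillating Gaussian factor: along $\Re\tilde z=1$ one has $|e^{\Delta^2\tilde z^2/2}|=e^{\Delta^2(1-(\Im\tilde z)^2)/2}$, which gives Gaussian decay.

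In the space variables, the factor $e^{x'\zeta}$ with $\Re\zeta\le1/2$ is bounded by $e^{x'/2}$ for $x'\ge a$. The factor $e^{-y'\tilde z}$ has modulus $e^{-y'}$. So after conjugating by $e^{-3x'/4}/e^{-3y'/4}$, the rescaled kernel is bounded by $Ce^{-x'/4}e^{-y'/4}$ on $[a,\infty)^2$, uniformly in $n$. Dominated convergence then applies both to the contour integrals and to the Fredholm series, which yields the theorem.
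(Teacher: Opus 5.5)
Your proposal is correct and follows essentially the paper's route: Proposition \ref{prop:nw} at a single time, the substitutions $w=\mu^n_1+\Delta\zeta$, $z=\mu^n_1+\Delta\tilde z$ with the leftover $\mu^n_1$-factor removed as a conjugation, opening the $\zeta$-contour to $\gamma_{rec}$, and dominated convergence plus an exponential conjugation for the Fredholm series. Your one genuine variation is controlling $R_n=\frac{\Gamma(\tilde z+n)}{\Gamma(\zeta+n)}(n-1)^{\zeta-\tilde z}$ by Stirling with a split at $\Re\zeta=-n/2$, in place of the paper's truncated Weierstrass product, and this gives a somewhat more careful uniform-in-$n$ domination.

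A few small slips do not affect the conclusion.
\begin{itemize}
\item On $\Re\tilde z=1$ the factor $1/\Gamma(\tilde z)$ grows like $e^{\pi|\Im\tilde z|/2}$ rather than decaying.
\item The bound $|e^{x'\zeta}|\le e^{x'/2}$ fails for $x'<0$ (possible when $a<0$) as $\Re\zeta\to-\infty$.
\item On $\Re\zeta\le -n/2$ you must also carry the factor $\Gamma(n)$ coming from the $\tilde z$-side normalization.
\end{itemize}
All of these are absorbed by the Gaussian factors $e^{-\Delta^2(\Im\tilde z)^2/2}$ and $e^{-\Delta^2(\Re\zeta)^2/2}$. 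For the last one, use $\Gamma(n)\le e^{2|\zeta|\log(2|\zeta|)}$ on that region.
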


\begin{proof}
Recall Proposition \ref{prop:nw} and the kernel $K_{nw}$ which governs the law of $BLPP((0,0);(t,n))$ with drifts $\mu^n_i$.
We find after rescaling the kernel that
$$ \mathbf{Pr}(\chi_n \leq a)= \det(I - K_n)_{L^2[a,\infty)}$$
where $K_n(x,y) = \Delta^{-1} K_{nw}(1,\mu^n_i + (\log (n-1) + x)/\Delta;1;\mu^n_1+(\log (n-1) + y)/\Delta).$
Changing variables $z \mapsto \Delta z + \mu^n_1$ and $w \mapsto \Delta \zeta + \mu^n_1$ in the kernel $K_{nw}$
then shows that
$$K_n(x,y) = \frac{e^{\frac{\mu^n_1}{\Delta}(x-y)}}{(2 \pi \mathbf{i})^2} \oint_{\gamma_a}d\zeta \oint_{\gamma_b} dz\,
\frac{e^{\frac{\Delta^2}{2}z^2-yz}}{e^{\frac{\Delta^2}{2} \zeta^2-x\zeta}} \frac{e^{-z\log(n-1)}}{e^{-\zeta \log(n-1)}} \prod_{i=1}^n \frac{z+i-1}{\zeta+i-1} \frac{1}{z-\zeta}.$$
Here $\gamma_a$ is a contour containing all the $\zeta$-poles and $\gamma_b$ is a vertical contour to the right of $\gamma_a$.

The term $e^{\frac{\mu^n_1}{\Delta}(x-y)}$ is a conjugation factor and we remove it from the kernel without affecting the Fredholm determinant of $K_n$.

The product can be written as $$\prod_{i=1}^n \frac{z+i-1}{\zeta+i-1} = \frac{z}{\zeta} \prod_{i=1}^{n-1} \frac{1 + \frac{z}{i}}{1 + \frac{\zeta}{i}}$$
Recall that $\log n = H_n -\gamma_{EM} + O(1/n)$ where $H_n = 1 + \frac{1}{2} + \cdots + \frac{1}{n}$ is the harmonic sequence and $\gamma_{EM}$ is the Euler-Mascheroni constant.
Consequently, up to a multiplicative error of order $1 + O(1/n)$,
$$\frac{e^{-z\log(n-1)}}{e^{-\zeta \log(n-1)}} \prod_{i=1}^n \frac{z+i-1}{\zeta+i-1} = \frac{z e^{z \gamma_{EM}} \prod_{i=1}^{n-1} (1 + \frac{z}{i})e^{-z/i}}{\zeta e^{\zeta \gamma_{EM}} \prod_{i=1}^{n-1} (1 + \frac{\zeta}{i})e^{-\zeta/i}}$$
Recall the Weierstrass factorization theorem for the Gamma function:
\begin{equation} \label{eqn:Gammafactor} \frac{1}{\Gamma(z)} = z e^{\gamma_{EM} z} \prod_{i=1}^{\infty} (1 + \frac{z}{i}) e^{-z/i}.\end{equation}
The error rate of the infinite product is bounded by
\begin{equation} \label{eqn:Wfactor}
\left |\frac{1}{z e^{\gamma_{EM} z} \Gamma(z)} - \prod_{i=1}^{n-1} (1 + \frac{z}{i}) e^{-z/i} \right| \leq C|z|^2 \times \sum_{i=n}^{\infty} i^{-2}.
\end{equation}
We also have the identities
$$ | \Gamma(1+\mathbf{i}v)|^2 = \frac{\pi v}{\sinh{\pi v}}, \quad |\Gamma(-m+\mathbf{i}v)|^2 = \frac{\pi}{v \sinh{\pi v}} \prod_{k=1}^m (k^2+v^2)^{-1}.$$
From these identities we can deduce that
\begin{align}
  \label{eqn:verdecay}  |\Gamma(z)| &\geq 10 (|\Im(z)|+1), & z \in \gamma_{ver};\\
  \label{eqn:hordecay} |\Gamma(\zeta)| & \leq 10, & \zeta \in \gamma_{rec}.
\end{align}

We can choose the contour $\gamma_b$ to be the contour $\gamma_{ver} = \{\Re(z) = 1\}$. We choose the contour $\gamma_a$ to be a rectangle which intersects the real line at $1/2$ and $-L$ for any $L > n-1$, and its imaginary parts equal $\pm 1/2$ along the horizontal sides. By letting $L \to \infty$, we can turn $\gamma_a$ into $\gamma_{rec}$. Indeed, suppose
$\zeta = u+\mathbf{i}v$ where $u \leq -L$ and $|v| \leq 1/2$. Then the real part of $(\Delta^2/2) \zeta^2 - x\zeta$ is bounded below by $(\Delta^2/2) u^2 -xu -\Delta^2/8$.
From the estimates \eqref{eqn:Wfactor} and \eqref{eqn:hordecay}, we may deduce that
$$\left | \zeta e^{\zeta \gamma_{EM}} \prod_{i=1}^{n-1} (1 + \frac{\zeta}{i})e^{-\zeta/i} \right |^{-1} \leq e^{C (|u|+1)}.$$
Thus the $\zeta$-integral over the region $\{t \pm \mathbf{i}/2; t \leq -L\} \cup \{-L + \mathbf{i}t; |t| \leq 1/2\}\}$ is bounded in modulus by
$$ 2 \int_{-\infty}^{-L} du\, e^{-(\Delta^2/2)u^2 + C|u| + C} + e^{-(\Delta^2/2)L^2 + CL + C}$$
which tends to zero as $L \to \infty$. As such
$$ K_n(x,y) = \frac{1}{(2\pi \mathbf{i})^2} \oint_{\gamma_{rec}} d\zeta \oint_{\gamma_{ver}} dz\, \frac{e^{\frac{\Delta^2}{2}z^2-yz}}{e^{\frac{\Delta^2}{2} \zeta^2-x\zeta}}
\frac{z e^{z \gamma_{EM}} \prod_{i=1}^{n-1} (1 + \frac{z}{i})e^{-z/i}}{\zeta e^{\zeta \gamma_{EM}} \prod_{i=1}^{n-1} (1 + \frac{\zeta}{i})e^{-\zeta/i}} \frac{1}{z-\zeta}.$$
So as $n \to \infty$, $K_n(x,y)$ converges pointwise to $K_{\Delta}(x,y)$ due to \eqref{eqn:Gammafactor}, \eqref{eqn:Wfactor} and the dominated convergence theorem.

In order to derive the convergence of Fredholm determinants, we need to decay estimates on $K_n(x,y)$ in terms of the parameters $x,y$.
By parametrising the contours $\gamma_{rec}$ and $\gamma_{ver}$, using \eqref{eqn:Wfactor}, \eqref{eqn:verdecay} and \eqref{eqn:hordecay}, we will find that
$$ |K_n(x,y)| \leq C e^{\frac{1}{2}x - y}$$
for some constant $C$. If we conjugate the kernel by the factor $e^{\frac{2}{3}(y-x)}$, the conjugated kernel obeys
$$ e^{\frac{2}{3}(y-x)} |K_n(x,y)| \leq C e^{-\frac{1}{6}x - \frac{1}{3}y},$$
which is bounded and integrable over $x,y \in [a, \infty)$. Thus, with this conjugation, we get convergence of the Fredholm determinants as required.
\end{proof}

\section{Proof of Theorem \ref{thm:airyconvergence}} \label{sec:airy}
Let $H(t)$ be Brownian motion in the space of $n \times n$ Hermitian matrices (started from zero, see \eqref{eqn:hbm}).
Let $H_0$ be a fixed $n \times n$ Hermitian matrix. Consider the process
$$ M(t) = H(t) + tH_0$$
and its largest eigenvalue $\lambda_{max}(M(t))$. If $H_0$ has spectral decomposition $H_0 = U^* \Lambda U$ where $\Lambda = \mathrm{diag}(\lambda)$ is the diagonal matrix of eigenvalues and $U$ is unitary, then $U M(t) U^* = UH(t)U^* + t \Lambda$ has the same eigenvalues as $M(t)$. Since $U H(t) U^*$ has the same law of $H(t)$, it follows that
$$ \lambda_{max}(M(t)) \stackrel{law}{=} \lambda_{max}(H(t) + t \Lambda).$$
The eigenvalues of $H(t) + t\Lambda$ have the same law as $n$ independent Brownian motions with drifts given by the eigenvalues
of $\Lambda$, conditioned not to collide on $(0,\infty)$ \cite[Section 3.5.1]{AOCW}.
In particular $\lambda_{max}(H(t) + t \Lambda)$ is equal in law to the top particle among these $n$ Brownian motions conditioned not to collide.
It is shown in \cite[Theorem 8.3]{OC} (see also \cite{BBOC, BJ, OY}) that said top particle has the same law as the Brownian last passage process $t \mapsto BLPP((0,0); (t,n))$ where the Brownian motions
in the last passage problem have drifts given by the eigenvalues of $\Lambda$. In summary, if $H_0$ has eigenvalues $\lambda_1, \ldots, \lambda_n$ then,
as processes in $t$,
\begin{equation} \label{eqn:BBOC}
    \lambda_{max}(H(t) + t H_0) \stackrel{law}{=} \max_{0=t_0 \leq t_1 \leq \cdots \leq t_n = t} \sum_{k=1}^n B^{\lambda}_k(t_k) - B^{\lambda}_k(t_{k-1})
\end{equation}
where $B^{\lambda}_k$ are independent Brownian motions with corresponding drifts $\lambda_k$.

Now consider the process
$$X(t) = H(t) + H_0$$
and its largest eigenvalue $\lambda_{max}(X(t))$. Time inversion $$ B(t) \mapsto tB(1/t)$$ leaves the law of a standard Brownian motion invariant. Consequently, under time inversion, $H(t)$ does not change in law. Under time inversion, $M(t)$ is mapped to
$$ M(t) \mapsto tM(1/t) = t H(1/t) + H_0 \stackrel{law}{=} X(t).$$
Consequently, by \eqref{eqn:BBOC}, the finite dimensional distributions of $\lambda_{max}(X(t))$ satisfy
\begin{equation}
    \mathbf{Pr}(\lambda_{max}(X(t_i) \leq a_i, 1 \leq i \leq k) = \mathbf{Pr}(BLPP((0,0);(1/t_i,n)) \leq a_i/t_i, 1 \leq i \leq k).
\end{equation}
Proposition \eqref{prop:nw} now provides the following formula.
\begin{prop} \label{prop:Hermitiankernel}
    Let $H_0$ be a $n \times n$ Hermitian matrix with eigenvalues $\lambda_1, \ldots, \lambda_n$, and
    $H(t)$ be a Brownian motion in the space of $n \times n$ Hermitian matrices started from zero as in \eqref{eqn:hbm}.
    Suppose $0 < t_1 < t_2 < \cdots < t_k$. Then
    $$ \mathbf{Pr}(\lambda_{max}(H(t_i)+H_0) \leq a_i, 1 \leq i \leq k) = \det(I - K)_{L^2(\{1, \cdots, k\} \times [0,\infty))}.$$
    The kernel $K$ takes the form
    $$K(i,x;j,y) = - e^{(t_j^{-1} - t_i^{-1})\partial^2/2}\left(x + \frac{a_i}{t_i}, y + \frac{a_j}{t_j}\right) \ind{t_j < t_i} 
    + K_{nw}\left (t_i^{-1}, x + \frac{a_i}{t_i}; t_j^{-1}, y + \frac{a_j}{t_j}\right)$$
    where $K_{nw}$ is the kernel from \eqref{eqn:Knw} with $\mu_i = \lambda_i$.
\end{prop}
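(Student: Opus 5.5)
The plan is to combine the time-inversion identity just derived with Proposition \ref{prop:nw}, and then rewrite the resulting Fredholm determinant by a change of variables and a reordering of the time indices. By the identity preceding the proposition,
$$\mathbf{Pr}(\lambda_{max}(H(t_i)+H_0)\le a_i,\,1\le i\le k)=\mathbf{Pr}\big(BLPP((0,0);(s_i,n))\le a_i/t_i,\,1\le i\le k\big),\qquad s_i=t_i^{-1},$$
where the drifts of the Brownian last passage model are $\mu_i=\lambda_i$, by \eqref{eqn:BBOC}. I would first check this identity at the level of processes. The time inversion $H(t)\mapsto tH(1/t)$ preserves the law of the whole process $H$. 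Therefore $(tM(1/t))_{t>0}$ equals $(X(t))_{t>0}$ in law as processes. Since $\lambda_{max}(tM(1/t))=t\,\lambda_{max}(M(1/t))$, the finite dimensional laws transfer jointly, and \eqref{eqn:BBOC} holds as processes.

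Next, the times $s_i=1/t_i$ are decreasing in $i$. To apply Proposition \ref{prop:nw} I would relabel them in increasing order, $s_k<\dots<s_1$. That proposition then gives $\det(I-\chi K'\chi)$ on $L^2(\{s_1,\dots,s_k\}\times\R)$, with
$$K'(s_i,x;s_j,y)=-e^{(s_j-s_i)\partial^2/2}(x,y)\ind{s_i<s_j}+K_{nw}(s_i,x;s_j,y)$$
and $\chi(s_i,x)=\ind{x\ge a_i/t_i}$. The condition $s_i<s_j$ is the same as $t_j<t_i$, which produces the indicator in the statement. The Fredholm determinant does not depend on how the index set is labelled, since a simultaneous permutation of blocks leaves every term of the series \eqref{eqn:Fredholm} unchanged. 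So I can keep the original labels $1,\dots,k$.

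Finally, on each block $i$ I would translate $x\mapsto x+a_i/t_i$. This unitary change of variables carries $L^2([a_i/t_i,\infty))$ onto $L^2([0,\infty))$, and conjugating by it turns $\chi K'\chi$ into the kernel $K$ of the statement restricted to $\{1,\dots,k\}\times[0,\infty)$. In the Fredholm series this is just the substitution $x_\ell\mapsto x_\ell+a_{i_\ell}/t_{i_\ell}$ in each integral, so the determinants agree term by term.

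The only point needing care is absolute convergence of the series. I would not prove anything new here: it is inherited from the conjugation and decay bounds used in the proof of Theorem \ref{thm:BLPP}, which cover the narrow wedge kernel through the limit $b_L$ of Proposition \ref{prop:nw}. I expect no real obstacle; the main thing is to keep the index reversal under time inversion straight, so that the heat kernel term carries $\ind{t_j<t_i}$ and the exponent $t_j^{-1}-t_i^{-1}>0$.
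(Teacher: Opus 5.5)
Your proposal is correct and follows the paper's route. You use time inversion together with \eqref{eqn:BBOC} to reduce to $\mathbf{Pr}(BLPP((0,0);(1/t_i,n))\le a_i/t_i,\,1\le i\le k)$, then apply Proposition \ref{prop:nw} with the times relabelled in increasing order and each block shifted by $a_i/t_i$; this spells out the index reversal (giving $\ind{t_j<t_i}$) and the translation that the paper leaves implicit.
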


Recall from \eqref{eqn:lambdant} that
$$ \bar{\lambda}_n(t) = \frac{\lambda_n(t) - a_n - 2t d_n^2(b_n-a_n)n^{-1/3}}{d_n n^{-2/3}}.$$
It is enough to show that the finite dimensional laws of $\bar{\lambda}_n$ converges to those of the Airy process $\mathcal{A}$.
Recall the extended Airy kernel $K_{Airy}$  from \eqref{eqn:airykernel}.
Given $\tau_1 < \cdots < \tau_k$, we must prove that
$$ \mathbf{Pr}(\bar{\lambda}_n(\tau_i) \leq \xi_i, 1 \leq i \leq k) \to \det(I - K)_{L^2(\{1,\ldots,k\} \times [0,\infty)])}$$
where $K(i,x;j,y) = K_{Airy}(\tau_i, x+\xi_i + \tau_i^2; \tau_j, y+\xi_j + \tau_j^2)$.

By Proposition \ref{prop:Hermitiankernel}, we find that
$$\mathbf{Pr}(\bar{\lambda}_n(\tau_i) \leq \xi_i; 1 \leq i \leq k) = \det(I- K_n)_{L^2(\{1,\ldots,k\}\times [0,\infty))}$$
where $K_n(i,x;j,y) = d_n n^{1/3} K(i, d_n n^{1/3}x; j, d_n n^{1/3}y)$ and $K$ is the kernel presented there
with choice of parameters $t_i = \frac{1-2d_n^2\tau_i n^{-1/3}}{n}$ and $a_i = a_n + 2\tau_i d_n^2 (b_n-a_n) n^{-1/3} + d_n \xi_i n^{-2/3}$,
and $\lambda_j = \nu^n_j$. Assume $n$ is such that $\nu^n \in F(\alpha,\beta)$ where $0 < \alpha, \beta < \infty$.

A calculation shows that for an explicit, positive constant $c_n(i,x)$ (see below for its definition),
$$ K_n(i,x;j,y) = - \frac{c_n(j,y)}{c_n(i,x)}\, e^{(\tau_j-\tau_i) \partial^2}(x+ \xi_i, y+ \xi_j) \ind{\tau_j > \tau_i} (1 + O(n^{-1/3})) + J'_n(i,x;j,y).$$
The kernel $J'_n$ is expressed as a double contour integral as follows.
\begin{align*}
    J'_n(i,x;j,y) &= d_n n^{1/3} \oint_{\gamma_{ver}} dw \oint_{\gamma_{rec}}dz\,
    \frac{e^{n f_n(w) + n^{2/3}d_n^2 \tau_j g_n(w) - n^{1/3}d_nh_n(w,j,y)+\varepsilon(w)}}{e^{n f_n(z) + n^{2/3}d_n^2 \tau_i g_n(z) - n^{1/3}d_nh_n(z,i,x)+\varepsilon(z)}} \frac{1}{w-z}\, ,\\
    f_n(w) &= \frac{w^2}{2} - a_n w + \frac{1}{n} \sum_{i=1}^n \log(w-\nu^n_i), \\
    g_n(w) &= w^2 - 2b_n w,\\
    h_n(w,j,y) &= w( y+a_j + 4\tau_j^2d_n^3(b_n - \frac{w}{2})),\\
    |\varepsilon(w)| &\leq O_{\alpha,\beta,\tau_j}(|w|^2).
\end{align*}
The contour $\gamma_{rec}$ encloses all the poles at $z = \nu^n_i$ and $\gamma_{ver}$ is a vertical line lying to the right of $\gamma_{rec}$.

The numbers $a_n, b_n$ and $d_n$ are chosen so that
$$ f_n'(b_n) = f_n''(b_n) = 0,\; \frac{1}{3!} f'''_n(b_n) = \frac{d_n^3}{3}.$$
Note also that $g_n'(b_n) = 0$. Let us change variables $w \mapsto \frac{w}{d_n} + b_n$ and $z \mapsto \frac{z}{d_n}+b_n$. Then,
$$J'_n(i,x;j,y) = \frac{c_n(j,y)}{c_n(i,x)} J_n(i,x;j,y) $$
where
$$c_n(i,x) = e^{n^{2/3}d_n^2\tau_i g_n(b_n) - n^{1/3}d_n h_n(b_n,i,x)}$$
and
$$J_n(i,x;j,y) = n^{1/3} \oint_{\gamma_{ver}} dw \oint_{\gamma_{rec}}dz\,
    \frac{e^{n  A(w) + n^{2/3}d_n^2 B(w) - n^{1/3}d_nC(w,j,y) + \varepsilon(w)}}
    {e^{n A(z) + n^{2/3}d_n^2 B(z) - n^{1/3}d_n C(z,i,x) + \varepsilon(z)}} \frac{1}{w-z}$$
    with
\begin{align}
    A(w) &= f_n(\frac{w}{d_n}+b_n)-f_n(b_n)\\
    B(w) &= \tau_j (g_n(\frac{w}{d_n}+b_n)-g_n(b_n)) \\
    C(w,j,y) &= h_n(\frac{w}{d_n}+b_n,j,y) - h_n(b_n,j,y)\\
    |\varepsilon(w)| &\leq O_{\alpha,\beta,\tau_j}(|w|^2).
\end{align}
The contour $\gamma_{ver}$ is a vertical line lying to the right of zero and $\gamma_{rec}$ lies to the left of zero and encloses the poles at $z = d_n(v^n_i-b_n) < 0$.
We can remove the conjugation factor $c_n(j,y)/c_n(i,x)$ without changing the Fredholm determinant.
Then, in order to prove the theorem, we need to analyse the asymptotic behaviour of the kernel $J_n$.

In order to find the asymptotics of $J_n$, we need to choose good contours of integration. Contours need to be chosen such that $f_n(w)$ and $f_n(z)$ have strong
decay along them. The good choice of contours is found in the proof of Theorem 3.1 in \cite{Jo3}. We should choose $\gamma_{ver}$ to be the vertical contour
$$w(t) = \delta_1 n^{-1/3} + \mathbf{i}t$$ for $t \in \R$ and $\delta_1 > \max \{|\tau_1|, \ldots, |\tau_k|\}$. The contour $\gamma_{rec}$ should be the wedge-shaped contour
\begin{equation} \label{eqn:gammawedge} z(t) = \delta_2 n^{-1/3}+ \begin{cases}
    e^{\mathbf{i} \pi/6}t & \text{for}\; t \leq 0\\
    e^{\mathbf{i}5\pi/6} t & \text{for}\; t \geq 0
\end{cases}\end{equation}
with $\max \{|\tau_1|, \ldots, |\tau_k|\} < \delta_2 < \delta_1$.

To see why this is so, let $r(t) = \Re(f_n(x(t) + \mathbf{i}y(t))$ where $x(0) = b = b(\nu)$ and $y(0)=0$. A computation then gives
\begin{align*} r'(t) &= \frac{1}{n} \sum_{j=1}^n \frac{(-x^2x' + 2yy'x+y^2x'+2bxx'-2byy'-b^2x')\nu_j}{(b - \nu_j)^2((x-\nu_j)^2 + y^2)} \\
& + \frac{1}{n} \sum_{j=1}^n \frac{((xx'-yy'-2bx')(x^2+y^2) + b^2(xx'+yy'))}{(b - \nu_j)^2((x-\nu_j)^2 + y^2)}.
\end{align*}
We want to choose a contour such that the numerator above does not depend on $\nu$. So we need
$$  -x^2x' + 2yy'x+y^2x'+2bxx'-2byy'-b^2x'= 0,$$
which implies
$$ -\frac{1}{3} x^3 +y^2x + b x^2 - by^2 - b^2 x = \text{constant}$$
and the constant will be $- b^2/3$ since $x(0) = b$ and $y(0)=0$. The above then factorises to
$$ y^2(x-b) = \frac{1}{3}(x-b)^3.$$
The choice $x \equiv b$ and $y(t) = t$ leads to the vertical contour $w(t)$ up to the translation by $\delta_1 n^{-1/3}$.
The choice $y = \pm \frac{1}{\sqrt{3}}(x-b)$ and $x(t) = t+b$ leads to the contour $z(t)$ up to the translation as well.

Along the vertical contour $w(t)$ we have the estimate, assuming $\nu \in F(\alpha,\beta)$,
\begin{equation} \label{eqn:wdecay}
    \Re(f_n(w(t)+b)-f_n(b) \leq \begin{cases}
        - \frac{t^4}{8\beta^2} & 0 \leq |t| \leq \beta, \\
        - \frac{\beta^2-2t^2}{8} & |t| \geq \beta.
    \end{cases}
\end{equation}
Along the wedge-shaped contour $z(t)$ we have the estimate, again assuming $\nu \in F(\alpha, \beta)$,
\begin{equation} \label{eqn:zdecay}
    f_n(b)-\Re(f_n(z(t)+b) \leq \begin{cases}
        - \frac{t^4}{24\beta^2} & 0 \leq |t| \leq \beta, \\
        - \frac{\beta^2-2t^2}{24} & |t| \geq \beta.
    \end{cases}
\end{equation}

Having chosen these contours, we rescale $z \mapsto n^{-1/3}z$ and $w \mapsto n^{-1/3}w$ to get
$$J_n(i,x;j,y) = \oint_{\gamma_{\delta}} dw \oint_{\gamma_{\theta}}dz\,
    \frac{e^{n (f_n(\frac{w}{d_n}n^{-1/3}+b_n)-f_n(b_n)) + \tau_j w^2 - w(y+\xi_j)-2\tau_j^2d_n^3w^2n^{-1/3} + \varepsilon(w^2n^{-2/3})}}
    {e^{n (f_n(\frac{z}{d_n}n^{-1/3}+b_n)-f_n(b_n)) + \tau_i z^2 - z(y+\xi_i)-2\tau_i^2d_n^3z^2n^{-1/3} + \varepsilon(z^2n^{-2/3})}} \frac{1}{w-z}.$$
Here $\gamma_{\delta}$ is the vertical contour $w(t) = \delta_1 + \mathbf{i}t$ oriented upwards and $\gamma_{\theta}$ is the wedge-shaped contour $\{ \delta_2+ e^{\pm \mathbf{i}\frac{\pi}{6}}t, t\leq 0\}$ oriented counter clockwise. We also have $\max_j |\tau_j| < \delta_2 < \delta_1$.

Let $$ L = L_n = n^{1/24}.$$
Suppose $|w| \leq L$ and $\nu^n \in F(\alpha,\beta)$. By Taylor's theorem (with remainder) we have that
$$ n (f_n\left(\frac{w}{d_n}n^{-1/3} + b_n\right) - f_n(b_n)) = \frac{1}{3}w^3 + O_{\alpha,\beta}(L^4 n^{-1/3}) = \frac{1}{3}w^3 + O_{\alpha,\beta}(n^{-1/6}).$$
Furthermore, $|\varepsilon(w^2 n^{-2/3})| = O_{\alpha,\beta,\tau_j}(n^{-7/12})$ and likewise for $|\varepsilon(z^2n^{-2/3})|$.
Similarly, $|2\tau_j^2d_n^3w^2n^{-1/3}| = O_{\alpha,\beta,\tau_j}(n^{-1/4})$ and likewise for $|2\tau_i^2d_n^3z^2n^{-1/3}|$.
Therefore, for $|w|,|z| \leq L$ and $\nu \in F(\alpha,\beta)$,
\begin{align} \label{eqn:localairy}
&\frac{e^{n (f_n(\frac{w}{d_n}n^{-1/3}+b_n)-f_n(b_n)) + \tau_j w^2 - w(y+\xi_j)-2\tau_j^2d_n^3w^2n^{-1/3} + \varepsilon(w^2n^{-2/3})}}{e^{n (f_n(\frac{z}{d_n}n^{-1/3}+b_n)-f_n(b_n)) + \tau_i z^2 - z(y+\xi_i)-2\tau_i^2d_n^3z^2n^{-1/3} + \varepsilon(z^2n^{-2/3})}} =\\ \nonumber
& \frac{e^{\frac{1}{3}w^3 + \tau_j w^2 - (y+\xi_j)w}}{e^{\frac{1}{3}z^3 + \tau_i z^2 - (x+\xi_i)z}} \times (1 + O_{\alpha,\beta}(n^{-1/6})).
\end{align}

Consider the intervals $I_1 = (-\infty, -L]$, $I_2 = [-L,L]$ and $I_3  = [L,\infty)$. Let $\gamma_{k,\delta}$ denote the contour $\gamma_{\delta}$ restricted to $t \in I_k$
and likewise for $\gamma_{k,\theta}$. Define
$$ I_{j,k} = \oint_{\gamma_{j,\delta}} dw \oint_{\gamma_{k,\theta}}dz\,
\frac{e^{n (f_n(\frac{w}{d_n}n^{-1/3}+b_n)-f_n(b_n)) + \tau_j w^2 - w(y+\xi_j)-2\tau_j^2d_n^3w^2n^{-1/3} + \varepsilon(w^2n^{-2/3})}}
{e^{n (f_n(\frac{z}{d_n}n^{-1/3}+b_n)-f_n(b_n)) + \tau_i z^2 - z(y+\xi_i)-2\tau_j^2d_n^3z^2n^{-1/3} + \varepsilon(z^2n^{-2/3})}} \, \frac{1}{w-z}$$
We have that
$$ J_n(i,x;j,y) = \sum_{j,k=1}^3 I_{j,k}.$$
Assume $n_0$ is such that $\nu^n \in F(\alpha,\beta)$ for every $n \geq n_0$ and consider $n \geq n_0$.

We can now argue exactly as in the proof of Theorem 3.1 in \cite{Jo3} by using the estimates \eqref{eqn:wdecay} and \eqref{eqn:zdecay}.
Firstly, it follows from \eqref{eqn:localairy} that
$$ I_{2,2} \to J_{Airy}(i,x+\xi_i;j,y+\xi_j) \quad \text{as}\; n \to \infty.$$
The aforementioned proof shows that there are constants $C = C_{\alpha,\beta} < \infty$ and $c = c_{\alpha,\beta} > 0$ such that
for all $x,y \geq 0$,
\begin{align*}
    |I_{2,2}| &\leq C e^{-c(x+y)}, \\
    |I_{2,k}| &\leq C e^{-cn^{1/6}-x}, \quad k=1,3,\\
    |I_{j,k}| &\leq C e^{-cn^{1/6}-cn^{1/8}x}, \quad j=1,3 \;\text{and}\; \;k=1,2,3.
\end{align*}
These estimates imply $J_n(i,x;j,y) \to J_{Airy}(i,x+\xi_i;j,y+\xi_j)$ as $n \to \infty$. Furthermore, they imply that
$J_n$ satisfies the bound $|J_n(i,x;j,y)| \leq f_i(x)g_j(y)$ for all $i,j$ where $f_i$ is integrable and $g_j$ is bounded over $[0,\infty)$.
As a result, by the dominated convergence theorem and Hadamard's inequality, it follows that
$$ \det(I-K_n)_{L^2(\{1,\ldots,k\} \times [0,\infty))} \to \det(I-K)_{L^2(\{1,\ldots,k\} \times [0,\infty))}$$
with \begin{align*}
K(i,x;j,y) &= - e^{(\tau_j-\tau_i)\partial^2}(x+\xi_i,y+\xi_j) \ind{\tau_j > \tau_i} + J_{Airy}(i,x+\xi_i; j, y+\xi_j) \\
&= K_{Airy}(i,x+\xi_i+\tau_i^2;j,y+\xi_j+\tau_j^2).
\end{align*}
This completes the proof.

\subsection{Inclusion into the class $F(\alpha,\beta)$}
We provide a criterion to check the existence of suitable $\alpha,\beta$ such that $\nu^n \in F(\alpha,\beta)$.
\begin{prop} \label{prop:Fab}
    For $\nu = \{\nu_1, \ldots, \nu_n\}$, let $\mathrm{diam}(\nu) = \max_j \nu_j - \min_j \nu_j$. For $0 \leq \eta \leq \mathrm{diam}(\nu)$,
    let $\rho(\nu,\eta) = \frac{| \{j:\nu_j \geq \max_i \nu_i - \eta\}|}{n}$. Given a sequence of point clouds $\nu^n$, set
    $$ \alpha = \liminf_n \, \sup_{\eta} \frac{\sqrt{\rho(\nu^n,\eta)} - \eta}{2}, \quad \beta = \limsup_n\, \mathrm{diam}(\nu^n) +2.$$
    If $\alpha > 0$ and $\beta < \infty$ then $\nu^n \in F(\alpha, \beta)$ for all sufficiently large $n$.
\end{prop}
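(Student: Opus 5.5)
The plan is to compare $b=b(\nu^n)$ with $M_n=\max_j \nu^n_j$ using only the defining equation \eqref{eqn:bnu}. Every difference $b-\nu^n_j$ lies between $b-M_n$ and $b-M_n+\mathrm{diam}(\nu^n)$. It therefore suffices to show two things for all large $n$: $b-M_n\geq \alpha$, and $b-M_n\leq 1$. The bound $b-M_n\le 1$ then gives $b-\nu^n_j\leq 1+\mathrm{diam}(\nu^n)\le\beta$. The hypotheses $\alpha>0$ and $\beta<\infty$ leave enough slack to absorb the $\varepsilon$-losses coming from $\liminf$ and $\limsup$.

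\emph{Upper bound.} Since $b>M_n$, each summand in \eqref{eqn:bnu} satisfies $(b-\nu^n_j)^{-2}\leq (b-M_n)^{-2}$. Hence $1\leq (b-M_n)^{-2}$, that is, $b-M_n\leq 1$. By definition of $\beta$, for any $\varepsilon>0$ we have $\mathrm{diam}(\nu^n)\leq \beta-2+\varepsilon$ for large $n$. Taking $\varepsilon=1$ gives $b-\nu^n_j\leq 1+\mathrm{diam}(\nu^n)\leq \beta$.

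\emph{Lower bound.} Fix $\eta\in[0,\mathrm{diam}(\nu^n)]$ and keep in \eqref{eqn:bnu} only the indices with $\nu^n_j\geq M_n-\eta$. For these, $b-\nu^n_j\leq b-M_n+\eta$, so
$$1\geq \frac{1}{n}\sum_{j:\,\nu^n_j\geq M_n-\eta}\frac{1}{(b-\nu^n_j)^2}\geq \frac{\rho(\nu^n,\eta)}{(b-M_n+\eta)^2}.$$
This gives $b-M_n\geq \sqrt{\rho(\nu^n,\eta)}-\eta$. Taking the supremum over $\eta$, we get $b-M_n\geq \sup_\eta(\sqrt{\rho(\nu^n,\eta)}-\eta)$. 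By the definition of $\alpha$ as a $\liminf$, this supremum is at least $2(\alpha-\varepsilon)$ for all large $n$. Choosing $\varepsilon=\alpha/2$ yields $b-\nu^n_j\geq b-M_n\geq \alpha$ for every $j$.

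There is no real obstacle; the only point needing care is the constants. The factor $2$ in the definition of $\alpha$ absorbs the $\liminf$ error, and the $+2$ in $\beta$ (we only need $+1$) absorbs both the bound $b-M_n\le1$ and the $\limsup$ error. Together these give $\nu^n\in F(\alpha,\beta)$ for all sufficiently large $n$.
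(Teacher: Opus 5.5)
Your proposal is correct and follows the paper's proof essentially step for step. The bound $b(\nu^n)-\max_j\nu^n_j\le 1$ (the paper gets it by contradiction, you directly) yields the upper bound through the diameter, restricting the sum in \eqref{eqn:bnu} to indices with $\nu^n_j\ge\max_i\nu^n_i-\eta$ yields the lower bound, and your explicit $\varepsilon$-bookkeeping showing that the factor $2$ in $\alpha$ and the $+2$ in $\beta$ absorb the $\liminf$/$\limsup$ errors spells out what the paper leaves implicit.
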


\begin{proof}
    Let $b_n = b(\nu^n)$, $\nu^n_{max} = \max_j \nu_j^n$ and $\nu^n_{min} = \min_j \nu^n_j$.
    
    Firstly we claim that $b_n \leq \nu^n_{max} + 1$. Indeed, if not, then $b_n-\nu^n_j > \nu^n_{max}-\nu^n_{max} +1 = 1$ for every $j$.
    Therefore, $(b_n - \nu^n_j)^2 > 1$ for every $j$, which shows that the average
    $$ \frac{1}{n} \sum_{j=1}^n \frac{1}{(b_n-\nu^n_j)^2} < 1,$$
    which is a contradiction. Consequently, $b_n - \nu^n_j \leq \nu_{max}+1-\nu_{min} = \mathrm{diam}(\nu^n)+1$ for every $j$.
    Therefore, $b_n - \nu^n_j \leq \beta$ for every $j$ and all sufficiently large values on $n$.

    Next, suppose $\nu^n_j \geq \nu^n_{max} - \eta$. Then, $0 < b_n-\nu^n_j \leq b_n-\nu^n_{max}+\eta$, which implies
    that $(b_n-\nu^n_j)^2 \leq (b_n-\nu^n_{max}+\eta)^2$. Therefore,
    $$1 = \frac{1}{n} \sum_{j=1}^n \frac{1}{(b_n-\nu^n_j)^2} \geq \frac{\rho(\nu^n,\eta)}{(b_n-\nu^n_{max}+\eta)^2}.$$
    This implies $b_n-\nu_{max}+\eta \geq \sqrt{\rho(\nu^n,\eta)}$. Consequently, $b_n - v^n_j \geq b_n - \nu^n_{max} \geq \sqrt{\rho(\nu^n,\eta)}-\eta$.
    Optimising over $\eta$ shows
    $$ b_n-\nu^n_j \geq \sup_{\eta} \sqrt{\rho(\nu^n,\eta)} - \eta \quad \text{for every}\; j.$$
    As a result, for all large values of $n$, $b_n-\nu^n_j \geq \alpha$ for every $j$.
\end{proof}

Suppose $H_n$ is a sequence of Hermitian matrices with eigenvalues $\lambda^n = \{\lambda^n_1, \ldots, \lambda^n_n\}$. Observe that $$\mathrm{diam}(\lambda^n) \leq 2 ||H_n||_{op}.$$
Let $$\mu_n = \frac{1}{n} \sum_{i=1}^n \delta_{\lambda^n_i}$$
be the empirical measure of the eigenvalues of $H_n$. Suppose $\mu_n$ converges weakly to a measure $\mu_{\infty}$.
Denote by $\nu_{max} = \sup \, \{ x \in \R: x \in \mathrm{support}(\mu_{\infty})\}$ the maximal point in the support of $\mu_{\infty}$ and assume it is finite.
Let $$\rho_{\infty}(\eta) = \mu_{\infty}([\nu_{max}-\eta, \nu_{max}])$$
for $\eta \geq 0$. It is easy to see that
$$ \liminf_n \, \sup_{\eta \geq 0} \, \sqrt{\rho(\lambda^n,\eta)} -\eta \geq \sup_{\eta \geq 0} \, \sqrt{\rho_{\infty}(\eta)}-\eta.$$ Thus, if we set
$$ \alpha = \sup_{\eta \geq 0}\, \frac{\sqrt{\rho_{\infty}(\eta)}-\eta}{2}, \quad \beta = 2 \limsup_n ||H_n||_{op}+2$$
then $\lambda^n \in F(\alpha,\beta)$ for all large values of $n$ provided that $\alpha > 0$ and $\beta < \infty$.

\section{Proofs of Theorem \ref{thm:Piflat} and Corollary \ref{cor:loe}} \label{sec:runningmax}
We begin with the proof of Theorem \ref{thm:Piflat}.

\begin{proof}
    Let $\mu_{max} = \max_i \mu_i < 0$ and $\mu_{min} = \min_i \mu_i$. Let $\gamma_{rec}$ be a rectangular contour that intersects the real axis at the points $\mu_{max}/2$ and $\mu_{min}-1$ and has imaginary part equal to $\pm \mu_{max}/2$ along the horizontal sides. Decompose the kernel $K_{flat}$ in \eqref{eqn:Kflat} as the sum, $(I) + (II)$,
    of the two contour integral terms. We have to consider the kernel with parameters $m=n$, $t_1=t_2 = t$ and $x,y \geq \max\{a,0\}$ in the limit $t \to \infty$.

    In the term $(I)$, choose the contour $\gamma$ to be $\gamma_{rec}$ and the contour $\Gamma$ to be the vertical line $\Re(z)=0$. Let us bound the integrand of $(I)$.
    We have $|z-w| \leq -\mu_{max}/2$ and $\prod_{=1}^n|w-\mu_i| \geq (\min\{1,-\mu_{max}/2\})^n$. Consider the modulus of $e^{\frac{t}{2}w^2-xw}$.
    If $w = u+\mathbf{i}v \in \gamma_{rec}$ then $\Re(w^2) = u^2-v^2 \geq 0$. Since $x \geq \max\{a,0\}$, $\Re(-xw) =-xu \geq x (-\mu_{max}/2)$.
    We deduce that the integrand that depends on the $w$-variable is bounded from above in modulus by
    $e^{x\mu_{max}/2} \times C_{\mu_i}$ for some constant $C$ that depends on $\mu_i$. Consider the integrand in the $z$-variable. We find that
    $\Re(tz^2/2-yz) = - tv^2/2$ if $z = \mathbf{i}v$ for $v \in \R$. Also, $\prod_{i} |z-\mu_i| \leq C_{\mu_i}|v|^n$. As the contour $\gamma_{rec}$ has bounded length, we find that
    $$ |(I)| \leq C_{\mu_i} e^{x\mu_{max}/2} \int_{-\infty}^{\infty} dv\, e^{- \frac{t}{2}v^2 + m \log(|v|+1)} = e^{x\mu_{max}/2} \epsilon_t$$
    where $\epsilon_t \to 0$ as $t \to \infty$ by the dominated convergence theorem.

    In the term $(II)$ choose $\gamma$ to be $\gamma_{rec}$ again. Shift the contour $\Gamma$ to the vertical line $\Re(z) = 0$. In doing so we encounter a simple pole as $z=-w$
    with residue
    $$\frac{1}{2 \pi \mathbf{i}} \oint_{\gamma_{rec}}dw\, e^{(x+y)w} \prod_{i=1}^n \frac{\mu_i+w}{\mu_i-w}.$$
    Changing variables $w \mapsto -w$ gives the kernel $K(x,y)$ in the statement of the theorem.
    Note that we also have the bound
    $$|K(x,y)| \leq C_{\mu_i} e^{(x+y)\mu_{max}/2}.$$

    The remainder of the term $(II)$ is a double contour integral that looks like $(I)$ except it carries the term $z+w$ instead of $z-w$.
    By the same argument as before, it is bounded in modulus by $e^{x\mu_{max}/2} \epsilon_t$.

    We conclude that the kernel $K_{flat}(t,x;t,y) \to K(x,y)$ pointwise as $t \to \infty$. The exponential decay in the parameter $x$ and the boundedness in $y$
    also ensure, by Hadamard's inequality and the dominated convergence theorem, that the Fredholm determinant of $K_{flat}(t,x,t,y)$ converges to the one of $K$
    as $t \to \infty$.   
\end{proof}

In order to establish Corollary \ref{cor:loe}, we need the following proposition. It gives a determinant formula for the law of the running maximum of the top path among $n$ noncolliding Brownian bridges.
\begin{prop} \label{prop:bridgelaw}
    Let $s \in [0,1]$ and $a > 0$. Let $B^{br}_1$ be the top path among $n$ Brownian bridges conditioned not to collide as in Section \ref{sec:bridges}. Then,
    $$ \mathbf{Pr}(\max_{t \in [0,s]}\, B^{br}_1(t) \leq a) = \det(I - K)_{L^2[0,\infty)}$$
    where the kernel $K$ equals
    $$K(x,y) = K_{flat}(a^2 s/(1-s), x+a^2; a^2 s/(1-s), a^2+y).$$
    and the corresponding drifts $\mu_i = -1$ for $1\leq i\leq n$.
\end{prop}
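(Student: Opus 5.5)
The plan is to reduce the running maximum of the top bridge to the running maximum $Z_n(T)$ of a drifted Hermitian Brownian motion. We then apply the Fitzgerald--Warren identity \eqref{eqn:eigenidentity} together with Proposition \ref{prop:flat} at a single time.

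\emph{Reduction to the matrix bridge.} By the discussion in Section \ref{sec:bridges}, the ordered eigenvalues of $H^{br}(t) = (1-t)H(\tfrac{t}{1-t})$ have, as a process on $[0,1]$, the law of the noncolliding bridges. Hence
$$\max_{t\in[0,s]} B^{br}_1(t) \stackrel{law}{=} \max_{t \in [0,s]} (1-t)\,\lambda_{max}\bigl(H(\tfrac{t}{1-t})\bigr).$$
Substitute $u = t/(1-t)$, which maps $[0,s]$ bijectively and monotonically onto $[0, s/(1-s)]$, and note $1-t = 1/(1+u)$. The event $\{\max \le a\}$ then becomes
$$\bigl\{\lambda_{max}(H(u)) \le a(1+u)\ \text{for all}\ u \in [0,\tfrac{s}{1-s}]\bigr\} = \Bigl\{\sup_{0\le u\le s/(1-s)} \lambda_{max}\bigl(H(u) - au\, I\bigr) \le a\Bigr\}.$$

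\emph{Brownian scaling.} As processes, $H(u) \stackrel{law}{=} a^{-1}H(a^2u)$. Put $v = a^2 u$. Multiplying through by $a>0$, the event becomes
$$\Bigl\{\sup_{0\le v\le T} \lambda_{max}\bigl(H(v) - v\, I\bigr) \le a^2\Bigr\}, \qquad T = \frac{a^2 s}{1-s}.$$
This is exactly $\{Z_n(T)\le a^2\}$ with drift vector $\mu = (-1,\ldots,-1)$.

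\emph{Identification with BLPP.} By \eqref{eqn:eigenidentity}, $Z_n(T) \stackrel{law}{=} BLPP(0;(T,n))$ for fixed $T$. Proposition \ref{prop:flat} with $k=1$, $t_1 = T$, $a_1 = a^2$ and all $\mu_i=-1$ gives
$$\mathbf{Pr}(Z_n(T)\le a^2) = \det(I - \chi K_{flat}(T,\cdot;T,\cdot)\chi)_{L^2(\R)}, \qquad \chi = \ind{x\ge a^2}.$$
The heat kernel term is absent because there is only one time. Translating $x \mapsto x + a^2$ and $y\mapsto y+a^2$ is a unitary change of variables. It turns this into $\det(I-K)_{L^2[0,\infty)}$ with $K$ as stated.

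\emph{Expected obstacle.} I expect the only delicate point to be the first step. It requires the equality in law of the whole eigenvalue process of $H^{br}$ with the noncolliding bridges, not merely of the one-dimensional marginals, since we take a supremum over $t$. This should follow from the process-level statement cited from \cite{Dys,Gra}. Continuity of paths makes the supremum over $[0,s]$ measurable and unaffected by passing to a countable dense set. The endpoint case $s=1$ needs separate care, since then $T=\infty$ and one would instead take the limit as in Theorem \ref{thm:Piflat}. For $s<1$ the argument above is complete.
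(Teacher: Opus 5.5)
Your proposal is correct and follows the paper's proof. It uses the same chain of equalities: the matrix bridge $H^{br}(t)=(1-t)H(\tfrac{t}{1-t})$, the time change $u=t/(1-t)$, Brownian scaling by $a$, and then \eqref{eqn:eigenidentity} and Proposition \ref{prop:flat} at the single time $T=a^2s/(1-s)$; the only difference is that you start from the bridge side, whereas the paper starts from $\max_{t\le a^2T}\lambda_{max}(H(t)-tI)$. Your caveat about $s=1$ (and likewise $s=0$, where $T=0$ lies outside the range $t_1>0$ of Proposition \ref{prop:flat}) concerns a degenerate endpoint that the paper's argument also leaves implicit.
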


\begin{proof}
    For $T \geq 0$ consider the probability of the event $\max_{0 \leq t \leq a^2 T}\, \lambda_{max}(H(t)-tI) \leq a^2$.
    Since $\lambda_{max}(H(t)-tI) = \lambda_{max}(H(t))-t$, and $H(t)$ has the same law as $\alpha H(t/\alpha^2)$ for every $\alpha > 0$ (by Brownian scaling),
    we find that
    \begin{align*}
        \mathbf{Pr}\left( \max_{0 \leq t \leq a^2 T}\, \lambda_{max}(H(t)-tI) \leq a^2 \right) &=
        \mathbf{Pr}\left( \max_{0 \leq t \leq a^2 T}\, a \lambda_{max}(H(t/a^2)) \leq a^2 +t\right)\\
        &= \mathbf{Pr}\left( \max_{0 \leq t \leq T}\, a \lambda_{max}(H(t)) \leq a^2(1+t)\right)\\
        &= \mathbf{Pr}\left( \max_{0 \leq u \leq T/(1+T)}\, \lambda_{max}((1-u)H(u/(1-u))) \leq a\right)\\
        &= \mathbf{Pr}\left( \max_{0 \leq u \leq T/(1+T)}\, B^{br}_1(u) \leq a\right).
    \end{align*}
    We choose $T = s/(1-s)$ in which case $T/(1+T) = s$. It follows that
    \begin{equation} \label{eqn:bridgeidentity}
    \mathbf{Pr}\left( \max_{0 \leq u \leq s}\, B^{br}_1(u) \leq a\right) = \mathbf{Pr}\left( \max_{0 \leq t \leq a^2s/(1-s)}\, \lambda_{max}(H(t)-tI) \leq a^2 \right).
    \end{equation}
    The proposition now follows from \eqref{eqn:eigenidentity} and Proposition \ref{prop:flat}
\end{proof}

Looking at \eqref{eqn:bridgeidentity} for $s=1$ we conclude that
$$ \left (\max_{0 \leq u \leq 1}\, B^{br}_1(u) \right)^2 \stackrel{law}{=}
\max_{t \geq 0}\, \lambda_{max}(H(t)-tI).$$
Corollary \ref{cor:loe} now follows from Theorem \ref{thm:Piflat} for the case when every $\beta_i = 1$
together with \eqref{eqn:fw} and \eqref{eqn:NR}.

\end{document}